\documentclass{article}

\usepackage{latexsym}
\usepackage{eucal}
\usepackage{amsmath}
\usepackage{amssymb}
\usepackage{ amscd}
\usepackage{mathrsfs}
\usepackage{amsthm}
\usepackage{bm}
\usepackage{quiver}

\usepackage[a4paper, total={6.5in, 8.5in}]{geometry}

\newtheorem{definition}{Definition}[section]
\newtheorem{theorem}{Theorem}[section]
\newtheorem{corollary}[theorem]{Corollary}
\newtheorem{lemma}[theorem]{Lemma}
\newtheorem{proposition}[theorem]{Proposition}

\usepackage{tikz-cd}
\usepackage{tikz}
\usetikzlibrary{matrix}
\usepackage[numbers]{natbib}

\usepackage[english]{babel}
\usepackage{hyperref}
\hypersetup{
    colorlinks=true,
    linkcolor=blue,
    filecolor=blue,      
    citecolor=blue,
    urlcolor=blue,
    pdftitle={Overleaf Example},
    pdfpagemode= FullScreen,
}
\usepackage{url}

\usepackage{graphicx}
\DeclareUnicodeCharacter{221E}{\ensuremath{\infty}}

\DeclareMathOperator{\Set}{\mathcal{S}et}
\DeclareMathOperator{\sSet}{s\mathcal{S}et}
\DeclareMathOperator{\semisSet}{\mathcal{S}emi\sSet}
\DeclareMathOperator{\Top}{\mathcal{T}op}
\DeclareMathOperator{\Cat}{\mathcal{C}at}
\DeclareMathOperator{\sCat}{s\mathcal{C}at}

\DeclareMathOperator{\colim}{\mathrm{colim}}

\newcommand{\scr}[1]{\mathcal{#1}}

\DeclareMathOperator{\Map}{\scr{M}\mathrm{ap}}
\DeclareMathOperator{\Hom}{\scr{H}\mathrm{om}}
\DeclareMathOperator{\Fun}{\scr{F}\mathrm{un}}
\DeclareMathOperator{\fgt}{\mathrm{fgt}}
\DeclareMathOperator{\free}{\mathrm{free}}
\DeclareMathOperator{\Aut}{haut}

\title{On the Classifying Space of Homogeneous Functors}
\author{Jiahao Li}

\usepackage{lipsum}
\usepackage{adjustbox}
\begin{document}

\maketitle
\begin{abstract}
    Let $M$ be a manifold and let $\mathcal{M}$ be a simplicial model category. Given an object $A$ in $\mathcal{M}$, Tsopméné and Stanley constructed a topological space $\hat{A}$ that classifies homogeneous functors of degree $k$ from the poset of open subsets of $M$ into $\mathcal{M}$. They showed that the set of weak equivalent classes of such functors that maps disjoint union of $k$ open balls to $A$ is in bijection with the set $[F_k(M), \hat{A}]$ of homotopy classes of maps out of $F_k(M)$, the unordered configuration space of $k$ points in $M$. In this paper, we begin a study of the space $\hat{A}$, and we prove
that $\hat{A}$ is weakly equivalent to the classifying space $B\Aut(A)$, where $\Aut(A)$ is the simplicial monoid of self weak equivalences of $A$. This proves a conjecture of Tsopméné and Stanley. Our result enables us to generalize the classification of homogeneous functors of Weiss for $\scr{M}=\Top$ to any simplicial model category.
\end{abstract}

\tableofcontents

\section{Introduction}

Motivated by the study of spaces of embeddings, Weiss introduced in \cite{Weiss_1999} the theory of \textit{Manifold Calculus of Functors}. The general idea is as follows. Let $\scr{O}(M)$ be the category of open subsets of $M$ with inclusions as morphisms and let $\scr{M}$ be a simplicial model category. Manifold Calculus of Functors is the study of contravariant functors $F:\scr{O}(M)^{op}\to \scr{M}$ through successive approximations of what one calls polynomial functors, similar as approximating smooth functions through Taylor polynomials. Much like each term of a Taylor polynomial is homogeneous, there is the concept of \textit{homogeneous functors}, which we will formally introduce in Section~\ref{section46}. Weiss in \cite{Weiss_1999} classified homogeneous functors of degree $k$ for $\scr{M}= \Top$ by fibrations. More precisely, let $F_k(M)$ be the space of unordered configurations of $k$ points in $M$. For any homogeneous functor $F$ of degree $k$, Weiss showed that there exists a fibration $p:Z\to F_k(M)$ such that for every $S\in F_k(M)$, $p^{-1}(S)\simeq F(U_S)$, where $U_S$ is any choice of tubular neighborhoods of $S$ that is diffeomorphic to the disjoint union of $k$ open balls. 
    
    Weiss's result is specific to $\scr{M}=\Top$. If $\scr{M}$ is any simplicial model category, a separate result is given by Tsopméné and Stanley in \cite{tsopmene2024classificationhomogeneousfunctorsmanifold}.  For any object $A\in \scr{M}$, let $\scr{F}_{kA}(\scr{O}(M)^{op}, \scr{M})/\mathfrak{w}$ be the collection of weak equivalence classes of homogeneous functors $F:\scr{O}(M)^{op}\to \scr{M}$ of degree $k$ such that $F(U)\simeq A$ for any $U\in \scr{O}(M)$ that is diffeomorphic to the disjoint union of $k$ open balls. In \cite{tsopmene2024classificationhomogeneousfunctorsmanifold}, a space $\hat{A}$ is constructed out of the object $A$, and it is proved that
    \begin{itemize}
        \item If $k=1$, then $\scr{F}_{1A}(\scr{O}(M)^{op}, \scr{M})/\mathfrak{w}\cong [M, \hat{A}]$
        \item If $k>1$ and $\scr{M}$ is pointed, then $\scr{F}_{kA}(\scr{O}(M)^{op}, \scr{M})/\mathfrak{w}\cong [F_k(M), \hat{A}]$
    \end{itemize}

The classification of Weiss and that of Tsopméné and Stanley use two different approaches. However, the results should be essentially identical. In this paper, we prove Theorem~\ref{main} below. Let $\Aut(A)$ be the simplicial monoid of self weak equivalences of $A$ and let $B\Aut(A)$ be its classifying space.
\begin{theorem}\label{main}
    Let $\scr{M}$ be a simplicial model category. For any fibrant-cofibrant object $A\in \scr{M}$, the space $\hat{A}$ is weakly equivalent to $B\Aut(A)$.
\end{theorem}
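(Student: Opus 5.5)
The plan is to identify $\hat{A}$ with the classification space, in the sense of Dwyer--Kan, of the weak equivalence class of $A$, and then apply the standard identification of that classification space with $B\Aut(A)$. I will use the construction of $\hat{A}$ as I remember it from \cite{tsopmene2024classificationhomogeneousfunctorsmanifold}: the realization of a nerve-type simplicial object built from objects of $\scr{M}$ weakly equivalent to $A$ and weak equivalences between them. If the actual definition differs, step (i) below has to be adapted to it.

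\textbf{Step (i).} Let $w\scr{M}_A$ be the category whose objects are the objects of $\scr{M}$ weakly equivalent to $A$ and whose morphisms are the weak equivalences between them. Let $w\scr{M}^{cf}_A$ be its full subcategory on fibrant-cofibrant objects. The first goal is a zig-zag of weak equivalences
\[
\hat{A}\simeq |N(w\scr{M}_A)| \simeq |N(w\scr{M}^{cf}_A)| .
\]
The second equivalence is standard. Functorial cofibrant and fibrant replacements $Q$ and $R$ preserve weak equivalences. The natural weak equivalences $QX\to X$ and $X\to RX$ are therefore natural transformations between endofunctors of the weak-equivalence categories. Such transformations induce simplicial homotopies on nerves. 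Hence the inclusion $w\scr{M}^{cf}_A\subset w\scr{M}_A$ and $RQ$ are homotopy inverse on classifying spaces.

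\textbf{Step (ii).} Consider the simplicial category $\mathcal{W}_A$. Its objects are those of $w\scr{M}^{cf}_A$, and $\mathcal{W}_A(X,Y)\subset \Map(X,Y)$ is the union of the path components of weak equivalences. Let $d N\mathcal{W}_A$ denote the diagonal of the bisimplicial set obtained by taking nerves levelwise. I want to show that the inclusion of the discrete category $w\scr{M}^{cf}_A$ (vertex level) into $\mathcal{W}_A$ induces a weak equivalence $N(w\scr{M}^{cf}_A)\to dN\mathcal{W}_A$.

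The approach follows Dwyer--Kan's ``function complexes / classification'' argument. By the realization lemma it suffices to show that each degeneracy $N(\mathcal{W}_{A,0})\to N(\mathcal{W}_{A,n})$ is a weak equivalence. An $n$-simplex of $\Map(X,Y)$ is a map $X\otimes\Delta^n\to Y$. For cofibrant $X$, $X\otimes\Delta^n\to X$ is a weak equivalence between cofibrant objects. Using this I will build an endofunctor, roughly $X\mapsto X\otimes \Delta^n$ (or dually $Y\mapsto Y^{\Delta^n}$ to stay fibrant-cofibrant), together with natural weak equivalences connecting it to the identity. This gives a homotopy inverse to the degeneracy.

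I expect step (ii) to be the main obstacle. The endofunctor $X\otimes\Delta^n$ leaves the fibrant objects, so one must either work with cotensors $Y^{\Delta^n}$ or with a bisimplicial version of step (i) that allows cofibrant-only objects. The bookkeeping of which objects are fibrant and/or cofibrant is where care is needed. If this direct route becomes messy, I would instead cite Dwyer--Kan's theorem that the classification space of a simplicial model category is computed by the hammock localization, whose mapping spaces agree with $\Map$ on fibrant-cofibrant objects.

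\textbf{Step (iii).} The simplicial category $\mathcal{W}_A$ is connected: all objects are weakly equivalent to $A$, hence homotopy equivalent to $A$, since they are fibrant-cofibrant. Every morphism is invertible up to homotopy. The inclusion of the one-object full simplicial subcategory on $A$, which is the simplicial monoid $\Aut(A)$, is therefore a Dwyer--Kan equivalence: it is fully faithful and essentially surjective on $\pi_0$. Dwyer--Kan equivalences induce weak equivalences on diagonal nerves, and the diagonal nerve of $\Aut(A)$ is $B\Aut(A)$. Combining steps (i)--(iii) gives $\hat{A}\simeq B\Aut(A)$.
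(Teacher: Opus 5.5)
Your overall plan of identifying $\hat A$ with a classification space and then with $B\Aut(A)$ has the right shape. Your steps (ii)--(iii) are essentially the classical Dwyer--Kan identification $|N(w\scr{M}^{cf}_A)|\simeq B\Aut(A)$. The paper proves the analogous statement (Propositions~\ref{first} and~\ref{second}) using the homotopy coherent nerve and \cite[Proposition 1.3.4.7]{lurieHA} instead of diagonal nerves and the realization lemma.

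\textbf{The gap is the first equivalence of step (i)}, $\hat A\simeq|N(w\scr{M}_A)|$. You give no argument for it, and the actual definition of $\hat A$ does differ from the one you remember. Adapting step (i) to it is most of the proof. An $n$-simplex of $\hat A_\bullet$ is not a string of $n$ composable weak equivalences. It is a functor $(\tilde\Delta^n)^{op}\to\scr{C}_A$ that sends every morphism to a weak equivalence and is fibrant in the injective model structure. Moreover, the degeneracies of $\hat A_\bullet$ are not given by precomposition. Bridging this needs two ideas absent from your proposal.
\begin{itemize}
\item Dropping fibrancy, the simplicial set $\hat A'_\bullet$ of all such diagrams is $\mathrm{Ex}(\scr{N}\scr{C}^{op}_{A,w})$, because $\scr{N}\tilde\Delta^n\cong\mathrm{sd}\Delta[n]$ and the nerve is fully faithful. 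So you need Kan's theorem that the last vertex map $X\to\mathrm{Ex}(X)$ is a weak equivalence, together with a comparison of $\scr{N}\scr{C}^{op}_{A,w}$ and $\scr{N}\scr{C}_{A,w}$.
\item The levelwise inclusion $\hat A_\bullet\subset\hat A'_\bullet$ respects only face maps. You must therefore pass to semisimplicial sets and fat realizations (using $\|\fgt X\|\simeq|X|$). Then use the face-compatible natural injective-fibrant replacement $F\to R_nF$ of \cite{tsopmene2024classificationhomogeneousfunctorsmanifold} to show that $\|\fgt\hat A_\bullet\|\to\|\fgt\hat A'_\bullet\|$ is a weak equivalence.
\end{itemize}

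\textbf{Step (i) also hides a smallness issue.} $\hat A$ is built from the small full subcategory $\scr{C}_A$ of Definition~\ref{CAcategory}, not from all of $w\scr{M}_A$. What you need is $|\scr{N}\scr{C}_{A,w}|\simeq B\Aut(A)$, and this does not follow from the statement for the large category. The nerve of the weak equivalences of a full subcategory can have the wrong homotopy type. For instance, in $\sSet$ take $A=\scr{N}\mathbb{Z}$. It is a minimal Kan complex, so its weak self-equivalences are just the two automorphisms $\pm 1$. The one-object full subcategory on $A$ therefore has nerve $\scr{N}(\mathbb{Z}/2)$, whose $\pi_2$ vanishes, whereas $\pi_2B\Aut(A)\cong\pi_1(\Aut(A),\mathrm{id})\cong\mathbb{Z}$.

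So your realization-lemma argument in step (ii) would have to be carried out inside $\scr{C}_A$. That requires $\scr{C}_A$ to be closed under the endofunctors you use ($-\otimes\Delta^n$ or $(-)^{\Delta^n}$, plus replacements). This is exactly where the paper uses the closure of $\scr{C}_A$ under cylinders, choosing $Z=\Delta[1]\otimes-$ so that Lurie's criterion applies (Proposition~\ref{anchor}). Without these three ingredients, the equivalence $B\Aut(A)\simeq|N(w\scr{M}^{cf}_A)|$ that you establish is never connected to $\hat A$.
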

An immediate consequence of this theorem is the following. 
\begin{theorem}
\label{class}
Let $\scr{M}$ be a simplicial model category. For $k=1$ or $k>1$ and $\scr{M}$ is pointed, there is a bijection
\[
    \scr{F}_{kA}(\scr{O}(M)^{op}, \scr{M})/\mathfrak{w}\cong [F_k(M), B\Aut(A)]
\]
\end{theorem}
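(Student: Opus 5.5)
Theorem~\ref{class} follows by combining the classification of Tsopméné and Stanley with Theorem~\ref{main}.

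First, reduce to the case where $A$ is fibrant-cofibrant. The collection $\scr{F}_{kA}(\scr{O}(M)^{op}, \scr{M})/\mathfrak{w}$ depends only on the weak equivalence class of $A$, since its defining condition is $F(U)\simeq A$. So we may replace $A$ by a fibrant-cofibrant replacement $RQA$ without changing the left-hand side. We also read $B\Aut(A)$ as $B\Aut(RQA)$, the derived self-equivalence monoid. This is the only meaningful interpretation for a non-bifibrant $A$, and it is invariant under weak equivalence.

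Second, invoke the result of \cite{tsopmene2024classificationhomogeneousfunctorsmanifold} recalled above. For $k=1$ it gives a natural bijection $\scr{F}_{1A}/\mathfrak{w}\cong [M,\hat{A}]$. For $k>1$ with $\scr{M}$ pointed it gives $\scr{F}_{kA}/\mathfrak{w}\cong [F_k(M),\hat{A}]$. Since $F_1(M)=M$, both cases read uniformly as $[F_k(M),\hat{A}]$.

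Third, apply Theorem~\ref{main}, which gives a weak equivalence $\hat{A}\simeq B\Aut(A)$, possibly through a zigzag. We must check that this zigzag induces a bijection $[F_k(M),\hat{A}]\cong [F_k(M),B\Aut(A)]$.

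\begin{itemize}
\item $F_k(M)$ is an open manifold, namely a quotient of an open subset of $M^k$ by a free $\Sigma_k$-action. Hence it has the homotopy type of a CW complex.
\item For a CW complex $X$, the functor $[X,-]$ of unbased homotopy classes sends weak equivalences to bijections, by Whitehead's theorem / CW approximation.
\item If $\hat{A}$ or $B\Aut(A)$ is given as a simplicial set, we pass to geometric realizations; the same argument then applies.
\end{itemize}

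Composing the two bijections gives the claim. I expect no real obstacle here: all the substance lies in Theorem~\ref{main}. The only points needing care are the CW-homotopy type of $F_k(M)$ and the homotopy invariance of $\scr{F}_{kA}$ and $\Aut(A)$ under replacement of $A$.
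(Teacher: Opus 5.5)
Your proposal is correct and takes the same route as the paper. The paper's proof combines the classification $\scr{F}_{kA}(\scr{O}(M)^{op},\scr{M})/\mathfrak{w}\cong[F_k(M),\hat{A}]$ of Theorem~\ref{tsopmene2024} with the weak equivalence $\hat{A}\simeq B\Aut(A)$ of Theorem~\ref{main}; your additional points (that $F_k(M)$ has the homotopy type of a CW complex, so each map in the zigzag induces a bijection on $[F_k(M),-]$, and the reduction to bifibrant $A$) spell out what the paper's one-line proof leaves implicit.
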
This proves a conjecture of Tsopméné and Stanley \cite[Conjecture 8.3]{tsopmene2024classificationhomogeneousfunctorsmanifold}. 

Our proof strategy is to construct a zigzag of weak equivalence between $B\Aut(A)$ and $\hat{A}$:
\begin{itemize}
    \item The first weak equivalence, introduced in Section~\ref{section41}, provides a model of $BG$ for a simplicial monoid $G$ with some assumption. The classic approach is to take the diagonal of the Bar construction of $G$. However, if $G$ is a simplicial monoid that is also a Kan complex, the simplicial category with one object and the mapping space $G$ is equivalent to $BG$. For our concern, this monoid is $\Aut(A)$. Then, for any full subcategory of $\scr{M}^{cf}\cap \scr{W}$ with objects that are weakly equivalent to $A$ acts as a model of $B\Aut(A)$.
    \item The second weak equivalence, introduced in Section~\ref{section42}, connects the homotopy coherent nerve of a simplicial category $\scr{C}$ to its ordinary nerve. This is true for the category $\scr{C}$ when it satisfies certain property, see Proposition~\ref{ordinarynerve}. This property then becomes an anchor for us to generalize the result to every category of our concern. 
    \item The third weak equivalence, introduced in Section~\ref{section43}, is a special case of a well-known result in \cite[Chapter III, 4.]{goerss-jardine} that takes a simplicial set $X$ to a refined simplicial set $\mathrm{Ex}(X)$ of the same weak equivalence type.
    \item The last weak equivalence, introduced in Section~\ref{section44}, connects $\hat{A}$ to a more natural replacement of the space. The technicality of this section relies on the connection between the category of semisimplicial sets and the category of topological spaces.
\end{itemize}

There are several important consequences of Theorem~\ref{main}. First of all, as we mentioned above, the study of homogeneous functors provides a way to study the Taylor approximation of any good functor of our taste. There are already many applications of Manifold Calculus in the field of embedding spaces, as is its original purpose. For example, given another smooth manifold $N$ satisfying certain conditions, the embedding space functor $\mathrm{Emb}_N:=\mathrm{Emb}(\bullet, N):\scr{O}(M)^{op}\to \Top$ is a good functor, and its $1$st homogeneous layer $\scr{L}_1\mathrm{Emb}_N$ is equivalent to the functor of immersion $\mathrm{Imm}(\bullet, N)$ \cite[Page 97]{Weiss_1999}. Moreover, the study of $\Aut(A)$ and its classifying space is itself interesting. For instance, the study of the homotopy types of the classifying spaces of simplicial monoids tells us information about their (simplicial) group completion \cite{QuillenGroupComp}. The homotopy type of the space can be difficult to obtain. However, now with the connection of the homotopy type to the study of Manifold Calculus, we might be able to construct a theory of characteristic classes of homogeneous functors, just as the theory of characteristic classes of principal bundles. Therefore, the proof of Theorem~\ref{main} allows many possibilities for the connection of different fields and problems. 

This paper is structured as follows: In Section~\ref{section2}, we introduce the focus of this paper, the space $\hat{A}$ \cite{tsopmene2024classificationhomogeneousfunctorsmanifold}.  In Section~\ref{section3}, we state and prove some results that will be used in the proof of our main theorem. We do not claim originality for any of the results in this section, because they were first proven elsewhere. The purpose of this section is simply to provide a relatively modern approach to the results. In Section~\ref{section4}, we provide a proof of the main theorem and connects it to Manifold Calculus of Functors. In Section~\ref{section5}, we relate the classification of homogeneous functors in a general simplicial model category to that of Weiss. Lastly, in Section~\ref{section6}, we investigate the ∞-categorical analogy of Manifold Calculus developed by Arakawa in \cite{arakawa2026contextmanifoldcalculus} and its connection to ours in simplicial model categories.

\textbf{Acknowledgment}: This paper is the result of the author's Directed Studies at the University of British Columbia, Okanagan Campus, under the supervision of Dr. Paul Tsopméné, the first author of \cite{tsopmene2024classificationhomogeneousfunctorsmanifold}. We sincerely thank him for the guidance and support for the author and this paper, which without the paper would be impossible.

\section{The Space \protect\boldmath$\hat{A}$}\label{section2}

The purpose of this section is to recall the definition of the space $\hat{A}$ introduced in \cite{tsopmene2024classificationhomogeneousfunctorsmanifold}.

We begin with some notation.
\begin{itemize}
    \item $\Delta$, the category of nonempty finite totally ordered sets whose morphisms are order preserving functions.
    \item $\scr{P}(D)$, the category whose objects are subsets of $D$, for some finite set $D$, whose morphisms are the inclusions of subsets.
    \item $\partial\scr{P}(D)$, the full subcategory of $\scr{P}(D)$ without the empty set.
    \item    $\tilde\Delta^n$, the category whose objects are the nonempty totally ordered subsets of $[0<1<...<n]$, for any $n\geq 0$, whose morphisms are inclusions.
    \item   $\partial \tilde\Delta^n$, the full subcategory of $\tilde\Delta^n$ without the object $[n]$. 
    \item $\tilde\Lambda^n_k$, the full subcategory of $\partial \tilde{\Delta}^n$ without $[0<1<...<\hat{k}<...<n]$, where $\hat{k}$ means that we delete $k$.
\end{itemize}

    The space $\hat{A}$ is a parametrization of fibrant diagrams $(\tilde\Delta^{n})^{op}\to \scr{M}$ that have the same weak equivalence type, such that every continuous map $X\to \hat{A}$ determines a ``diagram'' that resembles the shape of $X$, obtained by gluing the diagrams parametrized by $\hat{A}$ together. More precisely, $\hat{A}$ is defined as follows.
    
   Given $f:X\to Y$, we denote $X\stackrel{\sim}{\rightarrowtail} V_f\twoheadrightarrow Y$ and $X{\rightarrowtail} W_f\stackrel{\sim}{\twoheadrightarrow}Y$ the functorial factorizations of $f$. Let $\scr{M}$ be a simplicial model category,

\begin{definition}
\label{CAcategory}   Let $A\in\scr{M}$ be fibrant and cofibrant. Define $\scr{C}_A$ to be the smallest full subcategory of $\scr{M}$ satisfying the following properties:
    \begin{itemize}
        \item $A\in \scr{C}_A$.
        \item For any object $X\in \scr{C}_A$, the cylinder object of $X$, $Z_X$, is in $\scr{C}_A$.
        \item For any object $X\in \scr{C}_A$, any $F: (\partial\tilde\Delta^{n})^{op}\to \scr{C}_A$ that is fibrant in the injective model structure, and any morphism $\phi:X\to\lim F$, $V_\phi\in \scr{C}_A$.
        \item For any $F: (\partial\tilde\Delta^{n})^{op}\to \scr{C}_A$ that is fibrant in the injective model structure, and for any $G:\partial \scr{P}(D)^{op}\to \scr{C}_A$ that is cofibrant in the projective model structure and maps every morphism to a weak equivalence, if there is $\phi:\colim G\to \lim F$, then $V_\phi\in\scr{C}_A$.
        \item $\scr{C}_A$ is closed under fibrant and cofibrant replacements, $Q$ and $R$.
        \item For any $F:(\tilde\Lambda^{n}_k)^{op}\to \scr{C}_A$ that is fibrant in the injective model structure, if $F$ maps every morphism to a weak equivalence, $Q\colim F\in \scr{C}_A$.
    \end{itemize}
\end{definition}

In fact, $\scr{C}_A$ is a small simplicial category in which all objects are fibrant-cofibrant and weakly equivalent to $A$ \cite[Lemma 3.1]{tsopmene2024classificationhomogeneousfunctorsmanifold}.

Notice that there exists a natural cosimplicial structure on the collection $\{\tilde\Delta^{n}\}_{n\geq0}$ in $\Cat$, induced by the order preserving functions in the category $\Delta$. For any $f:[n]\to[m]\in \Delta$, let $\tilde{f}:\tilde\Delta^{n}\to \tilde\Delta^{m}$ defined as 
\[\{a_0<a_1<...<a_k\}\mapsto \{f(a_0)\leq f(a_1)\leq ...\leq f(a_k)\}
\] We abuse notation by denoting the coface and the codegeneracy maps respectively as ${d^i}$ and ${s^i}$.

\begin{definition}
    Given any fibrant-cofibrant object $A\in\scr{M}$, the simplicial set $\hat{A}'_\bullet$ is defined as the subsimplicial set of $\Hom_{\Cat}((\tilde\Delta^{\bullet})^{op},\scr{C}_A)$ of weak equivalence diagrams. That is, for any $n$, the set $\hat{A}_n'$ is the set of functors $(\tilde\Delta^{n})^{op}\to \scr{C}_A$ that maps every morphism to a weak equivalence, where the simplicial structure on $\hat{A}'_\bullet$ is induced by the cosimplicial structure of $(\tilde\Delta^{\bullet})^{op}$ through precomposition.
\end{definition}
Therefore, the face and degeneracy structure on the simplicial set $\hat{A}'_\bullet$ comes naturally from the cosimplicial structure of $\tilde{\Delta}^\bullet$.

\begin{definition}
    For any $n$, define $\hat A_n$ to be the subset of $\hat{A}'_n$ of fibrant diagrams in the injective model structure.
\end{definition}

In \cite[Section 3.3]{tsopmene2024classificationhomogeneousfunctorsmanifold}, a simplicial structure is constructed on the collection $A_{\bullet} = \{\hat{A}_n\}_{n\geq 0}$. The face maps, $d_i: \hat{A}_n\to \hat{A}_{n-1}$, are defined by the precomposition $d_i(F):=F\circ d^{i}$. This is well defined because if $F$ is fibrant, then every face of $F$ is necessarily fibrant.  The construction of $s_i:\hat{A}_n\to \hat{A}_{n+1}$ is complicated and we refer the reader to \cite[Section 3.3]{tsopmene2024classificationhomogeneousfunctorsmanifold}. We will denote $\hat{A}:=|\hat{A}_\bullet|$, the geometric realization of $\hat{A}_\bullet$.

\section{Preliminary Results}\label{section3}

This section is dedicated to the results of others that will be used in this paper. Some of the results are fairly older and were written in a notation that could be unfamiliar to those who are used to the modern notation. Therefore, we rewrite these results in the modern notation that most would recognize. Some of the results are considerably less well-known than other results used in this paper and the length of the details deserve their own sections.

\subsection{Properties of Semisimplicial Sets}
\label{section31}

The connection between the categories $\semisSet$  and $\sSet$ was well studied in \cite{Rourkesemisimplicialset}. However, many of the notations used in their work would be ambiguous to modern readers. 

Let $\Delta^\bullet:\Delta\to \Top$ be the standard cosimplicial space such that $\Delta^n\subset \mathbb{R}^{n+1}$ is the standard $n$-simplex. 

Let $|-|: \sSet \to \Top$ be the geometric realization functor of simplicial sets, which is defined objectwise as $|X|:= \coprod_{n}X{(n)}\times\Delta^n/\sim$, where $\sim$ is the equivalence relation generated by the relation: for any $(\sigma, x)\in X{(n)}\times\Delta^n$ and $(\sigma',x')\in X{(m)}\times\Delta^m$, $(\sigma, x)\sim (\sigma',x')$ if and only if there is $\lambda:[n]\to [m]\in \Delta$ such that 
\[\sigma=X(\lambda)(\sigma')\text{ \ and }x'=\Delta^\bullet(\lambda)(x)
\]We will denote the points of $|X|$ as $[\sigma,x]$ for $\sigma \in X(n)$ and $x\in \Delta^n$.

\begin{lemma}
    \cite[Lemma 1.1] {Rourkesemisimplicialset} \label{factor}
    For any $\lambda:[n]\to[k]\in \Delta$, there exists a unique factorization of $\lambda$, $\lambda=\lambda_1\circ\lambda_2$, where $\lambda_1$ is injective and $\lambda_2$ is surjective.
\end{lemma}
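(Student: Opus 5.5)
The factorization is the image factorization in $\Delta$: $\lambda_2$ maps onto the image of $\lambda$, and $\lambda_1$ includes that image back into $[k]$. For \emph{existence}, let $I=\lambda([n])\subseteq[k]$. This is a nonempty subset of a totally ordered set, so it has some cardinality $m+1\geq 1$, and there is a unique order-preserving bijection $\iota:[m]\to I$. Set $\lambda_1:[m]\to[k]$ to be $\iota$ followed by the inclusion $I\subseteq[k]$; it is injective and order preserving. Set $\lambda_2=\iota^{-1}\circ\lambda:[n]\to[m]$. It is well defined because $\lambda$ lands in $I$, it is surjective by the definition of $I$, and it is order preserving because $\lambda$ and $\iota^{-1}$ are. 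By construction $\lambda_1\circ\lambda_2=\lambda$.

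For \emph{uniqueness}, suppose $\lambda=\mu_1\circ\mu_2$ with $\mu_2:[n]\to[p]$ surjective and $\mu_1:[p]\to[k]$ injective, both morphisms of $\Delta$. Since $\mu_2$ is surjective, the image of $\mu_1$ equals the image of $\lambda$, namely $I$. So $\mu_1$ is an order-preserving bijection $[p]\to I$, which forces $p=m$ and $\mu_1=\lambda_1$, because an order-preserving bijection between finite totally ordered sets is unique. Finally, $\lambda_1$ is injective and hence left cancellable, so $\lambda_1\circ\mu_2=\lambda_1\circ\lambda_2$ gives $\mu_2=\lambda_2$.

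There is no genuine obstacle here. The only point needing care is to state what uniqueness means. The intermediate object $[m]$ is an object of $\Delta$, and the factorization is unique on the nose: the middle object is the one with $m+1=|\lambda([n])|$, and both maps are literally determined. This relies on $\Delta$ having exactly one object $[m]$ of each cardinality, so there are no nontrivial automorphisms to quotient by. I would note this explicitly so that the lemma can later be used to define the canonical representatives in the geometric realization and in the comparison between semisimplicial and simplicial sets.
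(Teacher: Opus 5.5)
Your proof is correct. The paper gives no argument of its own for this lemma and simply cites \cite[Lemma 1.1]{Rourkesemisimplicialset}; your image factorization, with uniqueness holding on the nose because $\Delta$ has exactly one object of each cardinality and the order-preserving bijection $[m]\to\lambda([n])$ is unique, is the standard proof of that cited result.
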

\begin{definition}
   Define $\Delta_\mathrm{inj}$ to be the subcategory of $\Delta$ with the same objects, but the morphisms are the functions that are injective. 

    The category of semisimplicial sets, $\semisSet$, is the functor category $\Fun(\Delta_\mathrm{inj}^{op},\Set)$. Let $\fgt:\sSet\to \semisSet$ be the functor induced by the inclusion $\Delta_\mathrm{inj}\hookrightarrow \Delta$.
    
\end{definition}

There is a functor $\free:\semisSet\to \sSet$, defined as follows. For $Y\in \semisSet$, \begin{align*}
    \free(Y){(n)}:=\{(\sigma, \mu)\ |\ \forall k\geq 0,   \mu:[n]\to [k] \text{ is surjective, }\sigma\in Y{(k)}\}
\end{align*}
And for any morphism $\lambda:[m]\to [n]\in \Delta$, 
\begin{align*}
    \free(Y)(\lambda):\free(Y){(n)}\to  \free(Y){(m)}
    \\(\sigma, \mu)\mapsto (Y(\phi_1)(\sigma), \phi_2)
\end{align*} where $\phi_1\circ \phi_2=\mu\circ \lambda$ is the unique factorization in the previous lemma.

For $f:Y\to X \in \semisSet$ define $\free (f):\free(Y)\to \free(Z)$, as $\free(f)(\sigma, \mu)=(f(\sigma),\mu)$.

\begin{proposition}
    \cite[Theorem 1.7]{Rourkesemisimplicialset} \label{adj}The pair $(\free,\fgt)$ determines an adjunction $\free \dashv \fgt: \semisSet\to \sSet$.
\end{proposition}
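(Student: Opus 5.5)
We will prove Proposition~\ref{adj} by writing down the unit and counit explicitly and checking the two triangle identities. A hom-set bijection natural in both variables would also work, but the unit/counit route keeps the bookkeeping tied to the unique factorization of Lemma~\ref{factor}.

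\textbf{Step 0: $\free(Y)$ is a simplicial set.} First we confirm that $\free(Y)(\lambda)$ is functorial. Take $\lambda:[m]\to[n]$ and $\lambda':[l]\to[m]$, and factor $\mu\lambda=\phi_1\phi_2$ and then $\phi_2\lambda'=\psi_1\psi_2$. Then
\[
\mu\lambda\lambda'=(\phi_1\psi_1)\psi_2
\]
is an injective--surjective factorization. By the uniqueness in Lemma~\ref{factor} it is \emph{the} factorization of $\mu\lambda\lambda'$. Since $Y$ is contravariant on $\Delta_\mathrm{inj}$, we have $Y(\psi_1)Y(\phi_1)=Y(\phi_1\psi_1)$, which gives $\free(Y)(\lambda')\free(Y)(\lambda)=\free(Y)(\lambda\lambda')$. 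Identities are preserved because $\mu\circ\mathrm{id}=\mathrm{id}\circ\mu$. The same check shows that $\free(f)$ is simplicial for a semisimplicial map $f$.

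\textbf{Step 1: the unit.} For $Y\in\semisSet$ define
\[
\eta_Y:Y\to\fgt\free(Y),\qquad \sigma\mapsto(\sigma,\mathrm{id}_{[n]}).
\]
For an injective $\lambda$, the factorization of $\mathrm{id}\circ\lambda$ is $\lambda\circ\mathrm{id}$. Hence $\free(Y)(\lambda)(\sigma,\mathrm{id})=(Y(\lambda)\sigma,\mathrm{id})$, so $\eta_Y$ commutes with the face maps.

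\textbf{Step 2: the counit.} For $X\in\sSet$ define
\[
\varepsilon_X:\free\fgt(X)\to X,\qquad (\sigma,\mu)\mapsto X(\mu)(\sigma).
\]
To see that this is simplicial, let $\lambda:[m]\to[n]$ be arbitrary. Then
\[
\varepsilon_X\bigl(\free\fgt(X)(\lambda)(\sigma,\mu)\bigr)=X(\phi_2)X(\phi_1)\sigma=X(\phi_1\phi_2)\sigma=X(\mu\lambda)\sigma=X(\lambda)\bigl(X(\mu)\sigma\bigr).
\]
Naturality of $\eta$ in $Y$ and of $\varepsilon$ in $X$ is immediate from the formulas.

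\textbf{Step 3: the triangle identities.} These are short computations.
\begin{itemize}
\item For $\free Y\xrightarrow{\free\eta_Y}\free\fgt\free Y\xrightarrow{\varepsilon_{\free Y}}\free Y$: the element $(\sigma,\mu)$ goes to $((\sigma,\mathrm{id}),\mu)$, and then to $\free(Y)(\mu)(\sigma,\mathrm{id})$. Here $\mathrm{id}\circ\mu=\mathrm{id}\circ\mu$ is already injective--surjective, so the result is $(\sigma,\mu)$.
\item For $\fgt X\to\fgt\free\fgt X\to\fgt X$: the element $\sigma$ goes to $(\sigma,\mathrm{id})$ and then to $X(\mathrm{id})\sigma=\sigma$.
\end{itemize}

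The only real obstacle is Step 0 and the compatibility in Step 2. Both hinge on uniqueness of the injective--surjective factorization, which Lemma~\ref{factor} supplies. Once that is in hand, everything else is formal.
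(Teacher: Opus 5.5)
Your proof is correct and complete. The unit $\sigma\mapsto(\sigma,\mathrm{id})$ and counit $(\sigma,\mu)\mapsto X(\mu)\sigma$ are the right maps, and both triangle identities hold exactly as you compute.

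The paper gives no proof of this proposition; it only cites Rourke--Sanderson, so there is no in-paper argument to compare yours against. Your verification supplies the missing argument. It also covers the functoriality of $\free(Y)$ in Step 0, which the paper uses tacitly when it defines $\free$.

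Three small remarks:
\begin{itemize}
\item \emph{Where uniqueness is used.} You use the uniqueness in Lemma~\ref{factor} in Step 0, Step 1, and the first triangle identity. The computation in Step 2 does not need it: it uses only the equation $\mu\lambda=\phi_1\phi_2$ together with the functoriality of $X$.
\item \emph{The hom-set bijection.} Your formulas give explicitly the hom-set bijection the paper invokes after Lemma~\ref{phiX}. A semisimplicial map $g:Y\to\fgt X$ corresponds to $\varepsilon_X\circ\free(g)$, that is, $(\sigma,\mu)\mapsto X(\mu)g(\sigma)$. Your first triangle identity says precisely that $(\sigma,\mu)=\free(Y)(\mu)(\sigma,\mathrm{id})$. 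So a simplicial map out of $\free(Y)$ is determined by its values on the simplices $(\sigma,\mathrm{id})$, which is the hom-set version of the same fact.
\item \emph{Notation.} Calling your unit $\eta_Y$ clashes with the paper's homeomorphism $\eta_Y:|\free(Y)|\to\|Y\|$. Rename it if you splice your argument into the text.
\end{itemize}
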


For $Y\in \semisSet$, let $\sim_{\mathrm{semi}}$ be the equivalence relation on $\coprod_nY{(n)}\times\Delta^n$ generated by $(\sigma', x')\sim_\mathrm{semi}(\sigma,x) $ if and only if 
there is a morphism $\lambda\in \Delta_\mathrm{inj}$ such that $\sigma=Y(\lambda)(\sigma')$ and $x'=\Delta^\bullet(\lambda)(x)$. Define 
\begin{align*}
    \|Y\|:=\coprod_{n}Y{(n)}\times\Delta^n/\sim_\mathrm{semi}
\end{align*}We refer to this construction as the \textit{fat realization} of $Y$.

We denote a point in $\|Y\|$ as $[\sigma, x]_\mathrm{semi}$ for $\sigma\in Y(n)$ and $x\in \Delta^{n}$.
And for $f: Y\to Z\in \semisSet$, define $\|f\|:\|Y\|\to \|Z\|$ as $\|f\|[\sigma,x]_\mathrm{semi}=[f(\sigma),x]_\mathrm{semi}$, which is well-defined and continuous.

    Given $Y\in \semisSet$, define $\eta_Y:|\free(Y)|\to \|Y\|$ by
    \begin{align*}
        \eta_Y[(\sigma, \mu), x]=[\sigma, \Delta^\bullet(\mu)(x)]_\mathrm{semi}
    \end{align*}The map $\eta_Y$ is indeed well-defined. Let $[(\sigma', \mu'),x']_\mathrm{semi}\in \|\free (Y)\|$ such that there exists a morphism $\lambda \in \Delta$ where
    \begin{align*}
        \free(\lambda)(
    \sigma', \mu')=(Y(\phi_1)(\sigma'), \phi_2)=(\sigma, \mu)
    \end{align*}where $\phi_1\circ\phi_2=\mu'\circ \lambda$ is the unique factorization in Lemma~\ref{factor}, and $x'=\Delta^\bullet(\lambda)(x)$. Thus, \begin{align*}
        \eta_Y[(
    \sigma', \mu'),x']=[\sigma',\Delta^\bullet(\mu')(x')]_\mathrm{semi}
    =[\sigma',\Delta^\bullet(\mu'\circ \lambda)(x)]_\mathrm{semi}
    \\=[\sigma',\Delta^\bullet(\phi_1\circ \phi_2)(x)]_\mathrm{semi}
\\= [Y(\phi_1)(\sigma'),\Delta^\bullet(\phi_2)(x)]_\mathrm{semi}
   \\=\eta_Y[(\sigma, \mu), x]
    \end{align*}

\begin{proposition}
    \cite[Proposition 2.1, part II]{Rourkesemisimplicialset} \label{homeo} For every $Y\in \semisSet$, $\eta_Y:  |\free(Y) | \to \|Y\| $ is a homeomorphism.
\end{proposition}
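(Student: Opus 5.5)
We need to prove that $\eta_Y$ is a homeomorphism. The plan is to write down an explicit inverse and check that it is well defined and continuous; since both spaces are quotients of disjoint unions of simplices, continuity of each map reduces to continuity on each simplex.

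\textbf{Bijectivity via nondegenerate representatives.} First I would record the standard normal form for points of $|\free(Y)|$. Every point has a unique representative $[(\sigma,\mu),x]$ with $(\sigma,\mu)$ nondegenerate and $x$ in the interior of $\Delta^n$. I would then show that a simplex $(\sigma,\mu)\in\free(Y)(n)$ is nondegenerate exactly when $\mu=\mathrm{id}_{[n]}$. This follows from the face and degeneracy formula together with Lemma~\ref{factor}: $(\sigma,\mu)=\free(Y)(\mu)(\sigma,\mathrm{id})$, and a surjection $\mu$ other than the identity factors through a codegeneracy. So points of $|\free(Y)|$ correspond bijectively to pairs $(\sigma,x)$ with $\sigma\in Y(n)$ and $x\in\mathrm{int}\,\Delta^n$.

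Similarly, every point of $\|Y\|$ has a unique representative $[\sigma,x]_\mathrm{semi}$ with $x$ interior. To get it, push $x$ into the smallest face containing it by an injective $\lambda$; uniqueness holds because $\sim_\mathrm{semi}$ only involves injective maps. On these normal forms $\eta_Y[(\sigma,\mathrm{id}),x]=[\sigma,x]_\mathrm{semi}$, so $\eta_Y$ is a bijection.

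\textbf{Continuity.} Continuity of $\eta_Y$ is immediate, since on each simplex it is $(\sigma,\mu,x)\mapsto[\sigma,\Delta^\bullet(\mu)(x)]_\mathrm{semi}$. For the inverse I would define $\theta_Y:\|Y\|\to|\free(Y)|$ by $\theta_Y[\sigma,x]_\mathrm{semi}=[(\sigma,\mathrm{id}),x]$. To see that this is well defined, take an injective $\lambda$ with $\sigma=Y(\lambda)(\sigma')$. Then $\free(Y)(\lambda)(\sigma',\mathrm{id})=(Y(\lambda)(\sigma'),\mathrm{id})$, because the factorization of $\lambda$ is $\lambda\circ\mathrm{id}$. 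Hence the generating relation is respected. Continuity of $\theta_Y$ again holds simplexwise. One checks that $\eta_Y\theta_Y=\mathrm{id}$ directly, and $\theta_Y\eta_Y=\mathrm{id}$ follows from the identity $[(\sigma,\mu),x]=[(\sigma,\mathrm{id}),\Delta^\bullet(\mu)(x)]$ in $|\free(Y)|$, which is an instance of the geometric realization relation with $\lambda=\mu$.

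\textbf{Main obstacle.} The explicit inverse actually makes the separate bijectivity argument unnecessary. The step needing the most care is the compatibility of the face and degeneracy formula with composition: that $\free(Y)(\mu)(\sigma,\mathrm{id})=(\sigma,\mu)$ for surjective $\mu$, and that injective $\lambda$ act as claimed. Both follow from the uniqueness in Lemma~\ref{factor}.
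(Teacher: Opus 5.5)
Your proposal is correct and follows the paper's proof. The paper uses the same explicit inverse $\nu_Y[\sigma,x]_\mathrm{semi}=[(\sigma,\mathrm{id}),x]$ and checks well-definedness against injective $\lambda$ just as you do. It then gets $\nu_Y\circ\eta_Y=\mathrm{id}$ from $\free(Y)(\mu)(\sigma,\mathrm{id})=(\sigma,\mu)$ for surjective $\mu$, and, as you observe, once this inverse is in hand your normal-form bijectivity paragraph is redundant.
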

    
\begin{proof}
    Define $\nu_Y:\|Y\|\to |\free(Y)|$ by $\nu_Y[\sigma, x]_\mathrm{semi}=[(\sigma, id), x]$. For any $[\sigma',x']_\mathrm{semi}\in \|Y\|$ such that there exists a morphism $\lambda\in \Delta_\mathrm{inj}$ where $\sigma=Y(\lambda)(\sigma')$ and $x'=\Delta^\bullet(\lambda)(x)$,  
   \begin{align*}
       \nu_Y[\sigma',x']_\mathrm{semi}=[(\sigma', id),\Delta^\bullet(\lambda)(x)]=[(Y(\lambda)(\sigma'),id),x]=[(\sigma,id),x]=    \nu_Y[(\sigma,x)]_\mathrm{semi}
   \end{align*} Thus $\nu_Y$ is well-defined.
   
   Moreover, we have 
    \begin{align*}
        \eta_Y\circ\nu_Y[\sigma, x]_\mathrm{semi}=\eta_Y[(\sigma,id), x]=[\sigma, x]_\mathrm{semi}
        \\\nu_Y\circ \eta_Y[(\sigma, \mu), x]=\nu_Y[\sigma, \Delta^\bullet(\mu)( x)]_\mathrm{semi} = [(\sigma, id),\Delta^\bullet(\mu)(x)]
    \end{align*} Since $\mu$ is by definition surjective, 
     \begin{align*}
         (\sigma, id,\Delta^\bullet(\mu)(x))\sim  (\free(\mu)(\sigma, id),x)
         =((\sigma, \mu), x)
     \end{align*} Thus $(\sigma, id,\Delta^\bullet(\mu)(x))  \sim_\mathrm{semi}((\sigma, \mu), x)$.  Therefore, $\nu_Y$ and $\eta_Y$ are continuous inverses of each other.
\end{proof}
For any semisimplicial set $Y\in \semisSet$, we can construct a chain complex $C_*^{\mathrm{semi}}(Y)$ such that $C_n^{\mathrm{semi}}(Y):=\mathbb{Z}[ Y(n)]$, the free abelian group generated by $Y(n)$, and 
\begin{align*}
\partial_{n}:  C_n^{\mathrm{semi}}(Y)\to  C_{n-1}^{\mathrm{semi}}(Y)
    ,\ \sigma\mapsto \sum_{i=0}^n(-1)^i d_i\sigma
\end{align*}
Notice that if $Y=\fgt X$ for some $X\in \sSet$, there is a chain isomorphism $   C_*(X)\to C_*^{\mathrm{semi}}(Y)$.
\begin{lemma}
   \label{chain} For any $Y \in \semisSet$, the homology of $\|Y\|$ is isomorphic to the homology of $C_*^{\mathrm{semi}}(Y)$.
\end{lemma}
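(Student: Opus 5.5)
We prove that the homology of $\|Y\|$ is isomorphic to the homology of $C_*^{\mathrm{semi}}(Y)$ by reducing to the simplicial case through Proposition~\ref{homeo}. Since $\eta_Y:|\free(Y)|\to\|Y\|$ is a homeomorphism, it suffices to compute $H_*(|\free(Y)|)$. For a simplicial set $X$ there is the classical isomorphism $H_*(|X|)\cong H_*(C_*(X))$, where $C_*(X)$ is the (unnormalized) Moore complex. It also agrees with the normalized complex $N_*(X)=C_*(X)/D_*(X)$, with $D_*(X)$ the subcomplex spanned by degenerate simplices; the quotient map is a quasi-isomorphism. So the plan is to exhibit a chain isomorphism $N_*(\free(Y))\cong C_*^{\mathrm{semi}}(Y)$.

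\textbf{Step 1: nondegenerate simplices of $\free(Y)$.} We show that a simplex $(\sigma,\mu)\in\free(Y)(n)$ is degenerate exactly when $\mu\neq id$. If $\mu:[n]\to[k]$ is a nonidentity surjection, write $\mu=\mu'\circ s^j$ for a codegeneracy $s^j$. Then $(\sigma,\mu)=\free(Y)(s^j)(\sigma,\mu')$, because the factorization of $\mu'\circ s^j$ from Lemma~\ref{factor} is $id\circ(\mu'\circ s^j)$. Conversely, $(\sigma,id)$ is never in the image of a degeneracy $s_j=\free(Y)(s^j)$: any element $(\tau,\nu)$ with $\nu:[n-1]\to[k]$ is sent to $(\tau,\nu\circ s^j)$, whose second coordinate has source $[n]$ but target of size at most $n$, so it is not the identity. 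Hence the nondegenerate $n$-simplices are exactly $\{(\sigma,id):\sigma\in Y(n)\}$, and $N_n(\free(Y))\cong\mathbb{Z}[Y(n)]$.

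\textbf{Step 2: the differentials agree.} In $N_*$, the boundary of $(\sigma,id)$ is $\sum(-1)^i d_i(\sigma,id)$ modulo degenerate simplices. Since $d^i$ is injective, the factorization of $id\circ d^i$ is $d^i\circ id$, so $d_i(\sigma,id)=(Y(d^i)\sigma,id)=(d_i\sigma,id)$. This simplex is nondegenerate, so nothing is discarded, and the map $\sigma\mapsto(\sigma,id)$ is a chain isomorphism $C_*^{\mathrm{semi}}(Y)\to N_*(\free(Y))$. Combining the steps gives
\[
H_*(\|Y\|)\cong H_*(|\free(Y)|)\cong H_*(C_*(\free(Y)))\cong H_*(N_*(\free(Y)))\cong H_*(C_*^{\mathrm{semi}}(Y)).
\]

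The main obstacle is the input $H_*(|X|)\cong H_*(N_*X)$ for simplicial sets, which we cite as standard: it follows from identifying cellular homology of the CW complex $|X|$, whose cells are the nondegenerate simplices, together with the normalization theorem. Alternatively, we could argue directly that $\|Y\|$ is a CW complex with one $n$-cell for each element of $Y(n)$, attached along the face maps. Its cellular chain complex is then visibly $C_*^{\mathrm{semi}}(Y)$, with degrees $\pm1$ determined by the orientation of the faces of $\Delta^n$. We would use this route as a check on the signs.
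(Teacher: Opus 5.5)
Your proposal is correct and follows the paper's own argument: pass through the homeomorphism $\eta_Y:|\free(Y)|\to\|Y\|$, identify the nondegenerate simplices of $\free(Y)$ as the $(\sigma,\mathrm{id})$, and use $C_*(\free(Y))\simeq N_*(\free(Y))\cong C_*^{\mathrm{semi}}(Y)$. You also supply two details the paper leaves out: that $(\sigma,\mathrm{id})$ is never degenerate (the paper proves only the other direction), and that the face maps, and hence the differentials, match under $\sigma\mapsto(\sigma,\mathrm{id})$.
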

\begin{proof}
Consider the homeomorphism $\eta_Y:|\free(Y)|\to \|Y\|$ from Proposition~\ref{homeo}. The homology of $|\free(Y)|$ is calculated by $C_*(\free (Y))$. An $n$-simplex $(\sigma, \mu)\in \free(Y){(n)}$ is degenerated if and only if $\mu\neq id$ because $(\sigma, id)\in\free (Y){(k)}$ and $\free(Y)(\mu)(\sigma, id)=(\sigma, \mu)$, as $\mu $ is surjective. Therefore, the normalized chain complex, $ N_*(\free (Y))$, is the chain complex where for each grade $k$, $N_k(\free (Y))$ is the free abelian group generated by $(\sigma, id)$ where $\sigma\in Y{(k)}$. It is well-known that $C_*(\free (Y))\simeq N_*(\free (Y))$, and $N_*(\free (Y))\cong C_*^{\mathrm{semi}}(Y)$. This concludes the claim.
\end{proof}

Let $X\in \sSet$, define a continuous map $\kappa_X:|X|\to \|\fgt(X)\|$ as follows:

For every $\sigma\in X{(n)}$, there is a unique surjective function $\mu_{\sigma}:[n]\to [k]$ and a unique nondegenerate $\tau_{\sigma}\in X{(k)}$ such that $X(\mu_{\sigma})(\tau_{\sigma})=\sigma$. Define $\kappa_X[\sigma, x]=[\tau_{\sigma}, \Delta^\bullet(\mu_{\sigma})(x)]_\mathrm{semi}$. There is also an injective function $\tilde\mu_{\sigma}$ such that $\mu_{\sigma}\circ \tilde\mu_{\sigma}=id$.
If $(\sigma', x')\in |X|$ such that there is a morphism $\lambda \in\Delta$ where $X(\lambda)(\sigma')=\sigma$ and $\Delta^\bullet(\lambda)(x)=x'$. Then $X(\mu_{\sigma'}\circ\lambda)(\tau_{\sigma'})=X(\mu_\sigma)(\tau_\sigma)$, and thus 
\begin{align*}
X(\tilde\mu_\sigma)   (X(\mu_{\sigma'}\circ\lambda)(\tau_{\sigma'}))= X(\mu_{\sigma'}\circ \lambda\circ\tilde\mu_\sigma)(\tau_{\sigma'})=X(\mu_\sigma\circ \tilde{\mu}_\sigma)(\tau_\sigma)=\tau_\sigma
\end{align*}Therefore, $\mu_{\sigma'}\circ \lambda\circ \tilde\mu_{\sigma}$ is injective or else $\tau_\sigma$ is degenerate, a contradiction, so there is 

\begin{align*}
    \kappa_X[\sigma, x]=[\tau_\sigma,\Delta^\bullet(\mu_{\sigma})(x)]_\mathrm{semi}
    \\=[X(\mu_{\sigma'}\circ \lambda\circ\tilde\mu_\sigma)(\tau_{\sigma'}),\Delta^\bullet(\mu_{\sigma})(x) ]_\mathrm{semi}
 \\  =[\tau_{\sigma'},\Delta^\bullet(\mu_{\sigma'}\circ \lambda\circ \tilde \mu_\sigma\circ \mu_\sigma)(x)]_\mathrm{semi}
\end{align*}in $\|\fgt(X)\|$. 

In $|X|$, we have that 
\begin{align*}
    (\tau_\sigma, \Delta^\bullet(\mu_{\sigma})(x))\sim (\sigma, x)\sim (\sigma', x')\sim(\tau_{\sigma'},\Delta^\bullet(\mu_{\sigma'})(x'))
\end{align*} and the above shows that 
\begin{align*}
    (\tau_\sigma, \Delta^\bullet(\mu_{\sigma})(x))\sim (\tau_{\sigma'},\Delta^\bullet(\mu_{\sigma'}\circ \lambda\circ \tilde \mu_\sigma\circ \mu_\sigma)(x))
\end{align*} Therefore, we have 
\begin{align*}
    [\tau_{\sigma'},\Delta^\bullet(\mu_{\sigma'})(x')]=[\tau_{\sigma'},\Delta^\bullet(\mu_{\sigma'}\circ \lambda\circ \tilde \mu_\sigma\circ \mu_\sigma)(x)]\in |X|
\end{align*}
Since $\tau_{\sigma'}$ is nondegenerate, this implies that $\Delta^\bullet(\mu_{\sigma'})(x')=\Delta^\bullet(\mu_{\sigma'}\circ \lambda\circ \tilde \mu_\sigma\circ \mu_\sigma)(x)$. Therefore, 
\begin{align*}
       \kappa_X[\sigma, x]  =[\tau_{\sigma'},\Delta^\bullet(\mu_{\sigma'}\circ \lambda\circ \tilde \mu_\sigma\circ \mu_\sigma)(x)]_\mathrm{semi}
       \\=    [\tau_{\sigma'},\Delta^\bullet(\mu_{\sigma'})(x')]_\mathrm{semi}
       \\=\kappa_X[\sigma', x']
\end{align*}
So $\kappa_X$ is well-defined.

    Define $\iota_X:  \|\fgt (X)| | \to |X| $ by $\iota_X[\sigma, x]_\mathrm{semi}=[\sigma,x]$. It is not hard to see this is well-defined. Observe that $\iota_X\circ \kappa_X=id_{|X|}$, since $[\sigma,x]=[\tau_{\sigma},\Delta^\bullet(\mu_\sigma)(x)]$. The following result is crucial for this paper, and we will provide a  detailed proof of it.
\begin{proposition}
     \cite[Proposition 2.1, part I]{Rourkesemisimplicialset}\label{fgtsimeq} For every $X\in \sSet$, the map $\iota_X:  \|\fgt (X)| | \to |X| $ is a homotopy equivalence.
\end{proposition}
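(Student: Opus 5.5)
The plan is to prove that $\iota_X$ is a weak homotopy equivalence and then conclude by Whitehead's theorem. Both $\|\fgt(X)\|$ and $|X|$ are CW complexes: the fat realization has one cell for each simplex of $\fgt(X)$, degenerate ones included. We already know $\iota_X\circ\kappa_X=id_{|X|}$, so $\iota_X$ is surjective on all homotopy groups. However, proving $\kappa_X\circ\iota_X\simeq id$ directly looks messy, so I would not do that. Instead I would induct over the simplices of $X$, using the fact that both functors $X\mapsto \|\fgt(X)\|$ and $X\mapsto |X|$ preserve colimits and send monomorphisms to inclusions of CW subcomplexes. The map $\iota$ is natural in $X$.

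\textbf{Reduction to simplices.} Write $X$ as the colimit of its skeleta $\mathrm{sk}_nX$. Each $\mathrm{sk}_nX$ is obtained from $\mathrm{sk}_{n-1}X$ as a pushout of $\coprod \partial\Delta^n\leftarrow \coprod\partial\Delta^n\hookrightarrow\coprod\Delta^n$, indexed by the nondegenerate $n$-simplices. The functor $\fgt$ preserves colimits, being a restriction functor, and fat realization is a colimit of a coend. So both functors turn this pushout into a pushout of spaces along a cofibration, which is a homotopy pushout. By the gluing lemma, $\iota$ is a weak equivalence on $\mathrm{sk}_nX$ as soon as it is one on $\mathrm{sk}_{n-1}X$, on $\partial\Delta^n$ and on $\Delta^n$. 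Here $\partial\Delta^n$ is itself handled by the same induction, since it is a finite colimit of smaller simplices. Passing to the colimit over $n$ is harmless, because homotopy groups commute with sequential colimits of closed CW inclusions by compactness of spheres and of their homotopies. Disjoint unions are also immediate. Hence everything reduces to showing that $\iota_{\Delta^n}$ is a weak equivalence, i.e.\ that $\|\fgt(\Delta^n)\|$ is contractible, since $|\Delta^n|$ is.

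\textbf{Contractibility of $\|\fgt(\Delta^n)\|$.} Put $Y=\fgt(\Delta^n)$. Its $k$-simplices are all order-preserving maps $\sigma:[k]\to[n]$, injective or not. Define a cone operator $c:Y(k)\to Y(k+1)$ by $c(\sigma)=(0,\sigma(0),\dots,\sigma(k))$; this is again order preserving. It satisfies $d_0c(\sigma)=\sigma$ and $d_{i+1}c(\sigma)=c(d_i\sigma)$. I would then define
\[
H:\|Y\|\times[0,1]\to\|Y\|,\qquad H([\sigma,x]_\mathrm{semi},t)=[c(\sigma),(t,(1-t)x)]_\mathrm{semi}.
\]
The identity $d_{i+1}c=c\,d_i$ is exactly what makes $H$ compatible with $\sim_\mathrm{semi}$, because coface maps $\delta^i$ on $\Delta^k$ correspond to $\delta^{i+1}$ on the last $k+1$ coordinates. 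At $t=0$ the relation $d_0c(\sigma)=\sigma$ gives $H_0=id$. At $t=1$ every point goes to the vertex $c(\sigma)|_{\{0\}}$, which is the $0$-simplex $0$. Continuity follows from the universal property of the quotient, since $[0,1]$ is locally compact. So $\|Y\|$ is contractible, and $\iota_{\Delta^n}$ is a weak equivalence between contractible spaces.

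As an optional sanity check, Lemma~\ref{chain} together with the chain isomorphism $C_*(X)\cong C_*^\mathrm{semi}(\fgt X)$ shows that $\iota_X$ induces an isomorphism on homology in general. This is consistent with the argument but is not needed for it.

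I expect the main obstacle to be the homotopy-pushout step. One has to check carefully that $\|\fgt(-)\|$ sends the skeletal pushouts to pushouts along genuine CW inclusions, so that the gluing lemma applies. The concern is that $\fgt$ of a pushout of simplicial sets is a pushout of semisimplicial sets in which degenerate simplices are glued correctly. This holds because colimits in both presheaf categories are computed levelwise. Once that bookkeeping is in place, Whitehead's theorem upgrades the weak equivalence $\iota_X$ to a homotopy equivalence.
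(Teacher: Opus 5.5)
Your proof is correct, and it takes a genuinely different route from the paper.

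The paper argues globally. It identifies $\|\fgt X\|$ with $|\free\fgt X|$ via $\eta_{\fgt X}$ and uses Lemma~\ref{chain} to get an isomorphism on homology. It then proves $\pi_1\iota_X$ bijective by a cellular argument: surjectivity comes from $\iota_X\kappa_X=\mathrm{id}$, and injectivity from pushing degenerate edges across the $2$-simplex $s_1\sigma$, all of whose faces are $\sigma$. Finally it invokes a Whitehead-type theorem.

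You argue from local to global instead. Colimit preservation of $\|\fgt(-)\|$ and $|-|$, naturality of $\iota$, the gluing lemma along skeletal pushouts (CW subcomplex inclusions), and compactness for the passage to $\colim_n \mathrm{sk}_nX$ reduce everything to one input: $\|\fgt(\Delta^n)\|$ is contractible. Your cone operator $c$, with $d_0c=\mathrm{id}$ and $d_{i+1}c=c\,d_i$, proves this by an explicit homotopy. Your check that $H$ respects $\sim_{\mathrm{semi}}$ on the generators $\delta^i$, and your computations at $t=0$ and $t=1$, are right.

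The paper's approach has the advantage of reusing the machinery built just before it ($\eta_Y$, $\kappa_X$, Lemma~\ref{chain}). It also yields the homology statement directly. Your approach controls all homotopy groups of all components at once using only standard model-categorical tools, and its only computation is elementary.

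Your approach also sidesteps the weakest step of the paper's argument. A map between non-simply-connected CW complexes that induces isomorphisms on $\pi_1$ and on integral homology need not be a weak equivalence. For example, take the self-map of $S^1\vee S^2$ that is the identity on $S^1$ and represents $2-t\in\mathbb{Z}[t^{\pm1}]\cong\pi_2$ on $S^2$. So the paper's closing appeal to Whitehead needs an extra step, such as passing to universal covers. Your argument never requires that step.

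When writing it up, phrase the induction explicitly on dimension. The hypothesis ``for all $X$ of dimension $\le n-1$'' then covers both $\mathrm{sk}_{n-1}X$ and $\coprod\partial\Delta^n$, and the case $n=0$ rests on contractibility of the infinite-dimensional complex $\|\fgt(\Delta^0)\|$.
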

\begin{proof}
   From the previous lemma, $\iota_X$ is a homotopy equivalence if and only if $\iota_X\circ\eta_{\fgt (X)}:|\free \fgt(X)|\to |X|$ is. From Lemma~\ref{chain}, the induced chain complex homomorphism $C_*(\free \fgt(X))\to C_*^{\mathrm{semi}}(\fgt(X))\cong C_*(X)$ is a quasi-isomorphism.

    Now consider the fundamental groups. From the Cellular Approximation Theorem \cite[Theorem 4.8]{Hatcher:478079}, it suffices to consider only the cellular maps. Given a CW complex $X$, we will denote $X_n$ its $n$-skeleton.
    
    First, $\pi_1\iota_X$ is surjective because $\pi_1\iota_X\circ \pi_1\kappa_X=id_{\pi_1}$. Given two $\gamma, \gamma':(S^1, 0)\to(\|\fgt(X)\|_1, *)$ such that $\iota_X\gamma\simeq \iota_X\gamma'$, then $\kappa_X\iota_X\gamma\simeq \kappa_X\iota_X\gamma'$. Therefore, if $\gamma\simeq \kappa_X\iota_X\gamma$ for any $\gamma$, then $\iota_X$ is monic.
     
     For any $\theta \in S^1$, there is a $\sigma \in X(1)$ such that $\gamma(\theta)=[\sigma, x]_\mathrm{semi}\in \|\fgt(X)\|$. Then $\kappa_X\iota_X\gamma(\theta)$ is $[\tau_{\sigma}, \Delta^\bullet(\mu_\sigma)(x)]_\mathrm{semi}$. If $\sigma$ is degenerate, $d_0\sigma=d_1\sigma=\tau_\sigma$. Let $\Sigma:=s_1\sigma$, then a simple calculation shows
    \begin{align*}
 d_0\Sigma=d_1\Sigma=d_2\Sigma=\sigma
    \end{align*}
Thus, there is a subspace of $\| \fgt(X)\|$,
    \begin{align*}
        \|\Sigma\|:=\Sigma\times \Delta^2\coprod\sigma\times \Delta^1 \coprod \tau_\sigma\times \Delta^0/  \sim_\mathrm{semi}\subseteq   \|\fgt(X)\|
    \end{align*}

    Now the subspace $\|\Sigma\|$ can be viewed as a CW complex with $0$-skeleton being the point $\tau_\sigma$, $1$-skeleton a loop $S^1$, $2$-skeleton and above being $D^2\cup_f S^1$, where $f:\partial D^2\to S^1$ is the identification of $\partial \Sigma$ onto $\sigma$, which is homotopic to $id:S^1\to S^1$. Therefore, $\|\Sigma\|\simeq D^2\simeq *$. Since the interval of $I\subseteq S^1$ such that $\gamma(\theta)=[\sigma, x]_\mathrm{semi}$ factors through $ \|\Sigma\|$, this allows us to construct a homotopy $\gamma\simeq \kappa_X\iota_X\gamma$. Therefore, $\pi_1\iota_X$ is injective.

    Then, it follows from Whitehead's Theorem \cite[Theorem 4]{WhiteheadI} that a continuous map that induces isomorphisms on all homology groups and the fundamental group is a homotopy equivalence.
\end{proof}
This is a key result that we will use in Section~\ref{section44}.

We close this subsection with another result that we will use later. We first introduce some definitions.

\begin{definition}
    \cite[Definition, page 323]{Rourkesemisimplicialset}
Given $X\in \sSet$, define $\mathrm{Core}(X)\in \semisSet$ as follows
\begin{align*}
 \mathrm{Core}(X){(n)}=\{ \sigma\in X{(n)}\ |\ \exists \text{ an injection }\lambda:[n]\to [k] \text{ and a nondegenerate $k$-simplex }\tau
 \\ \text{such that } X(\lambda)\tau=\sigma\}
\end{align*}
And for $d_i:[n-1]\to [n]$, for $\sigma\in \mathrm{Core}(X){(n)}$, $d_i^{\mathrm{core}}\sigma :=d_i\sigma$.

 Given $X\in \sSet$, $X$ is n.d.c. if $\sigma\in X(n)$ is nondegenerate, then $d_i\sigma$ is nondegenerate for all $0\leq i\leq n$. Given two n.d.c. simplicial sets $X$ and $Y$, a simplicial map $f:X\to Y$ is n.d.c. if for any nondegenerate $\sigma\in X(n)$, $f(\sigma)$ is nondegenerate.
\end{definition}

Although $\mathrm{Core}$ is not a functor $\sSet\to \semisSet$ (look at the simplicial map $\Delta[1]\to \Delta[0]$), if $f:X\to Y$ is an n.d.c. simplicial map, there is a semisimplicial map $\mathrm{Core}(f):\mathrm{Core}(X)\to  \mathrm{Core}(Y)$ defined by $\mathrm{Core}(f)(\sigma):=f(\sigma)$. In particular, for any $n$ and any $0\leq k\leq n$, $i^n_k:\Lambda^n_k\to \Delta[n]$ is n.d.c., so there is a map $\mathrm{Core}(i^n_k):\mathrm{Core}(\Lambda^n_k)\to \mathrm{Core}(\Delta[n])$. We have the following result.
\begin{lemma}
\cite[Proposition 1.5]{Rourkesemisimplicialset} \label{phiX}For every n.d.c. $X\in \sSet$, there is an isomorphism \[\varphi_X: \free\circ \mathrm{Core}(X)\to X\]
\end{lemma}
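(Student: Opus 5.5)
We construct $\varphi_X$ explicitly and check that it is a bijection in each degree. By definition, an $n$-simplex of $\free\circ\mathrm{Core}(X)$ is a pair $(\sigma,\mu)$, where $\mu:[n]\to[k]$ is surjective and $\sigma\in\mathrm{Core}(X)(k)\subseteq X(k)$. We set
\[
\varphi_X(\sigma,\mu):=X(\mu)(\sigma)\in X(n).
\]

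\textbf{Simpliciality.} Let $\lambda:[m]\to[n]$ and factor $\mu\circ\lambda=\phi_1\circ\phi_2$ with $\phi_1$ injective and $\phi_2$ surjective, as in Lemma~\ref{factor}. The face operators of $\mathrm{Core}(X)$ are the restrictions of those of $X$. Hence
\[
\varphi_X(\free(\lambda)(\sigma,\mu))=X(\phi_2)X(\phi_1)\sigma=X(\mu\circ\lambda)\sigma=X(\lambda)\varphi_X(\sigma,\mu).
\]
This is well defined because $\mathrm{Core}(X)$ is closed under the injective operators: a face of $X(\lambda')\tau$ with $\tau$ nondegenerate is again of that form.

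\textbf{The n.d.c.\ hypothesis.} Under n.d.c., $\mathrm{Core}(X)(k)$ is exactly the set of nondegenerate $k$-simplices. Indeed, if $\sigma=X(\lambda)\tau$ with $\lambda$ injective and $\tau$ nondegenerate, then $X(\lambda)$ is a composite of face maps. Iterating the n.d.c.\ property shows that $\sigma$ is nondegenerate. Conversely, a nondegenerate $\sigma$ lies in the core with $\lambda=id$ and $\tau=\sigma$.

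\textbf{Bijectivity.} This is the Eilenberg--Zilber lemma, which is already used in the definition of $\kappa_X$. Every $\rho\in X(n)$ can be written uniquely as $\rho=X(\mu_\rho)(\tau_\rho)$, with $\mu_\rho$ surjective and $\tau_\rho$ nondegenerate. Existence gives surjectivity of $\varphi_X$, since $(\tau_\rho,\mu_\rho)$ is a simplex of $\free\circ\mathrm{Core}(X)$. Uniqueness gives injectivity: if $X(\mu)\sigma=X(\mu')\sigma'$ with $\sigma,\sigma'$ nondegenerate, then $(\sigma,\mu)=(\sigma',\mu')$. A degreewise bijective simplicial map is an isomorphism of simplicial sets.

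The only real obstacle is the identification of $\mathrm{Core}(X)$ with the nondegenerate simplices. This is where n.d.c.\ is essential; without it, $\mathrm{Core}(X)$ contains degenerate faces, and injectivity of $\varphi_X$ fails, as in the example $\Delta[1]\to\Delta[0]$ mentioned before the statement. The uniqueness part of Eilenberg--Zilber I would cite as standard, proving it if needed using sections $\tilde\mu$ of the surjections, as in the well-definedness argument for $\kappa_X$.
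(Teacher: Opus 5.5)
Your proof is correct. It is the standard argument: n.d.c.\ identifies $\mathrm{Core}(X)$ with the nondegenerate simplices, and then Eilenberg--Zilber makes $(\sigma,\mu)\mapsto X(\mu)\sigma$ a degreewise bijective simplicial map. The paper gives no proof of its own here, only the citation to Rourke--Sanderson.

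One correction to your closing aside. $\Delta[1]$ and $\Delta[0]$ are both n.d.c., and the paper's example $\Delta[1]\to\Delta[0]$ only shows that $\mathrm{Core}$ is not functorial along non-n.d.c.\ maps. So it is not an instance of $\varphi_X$ failing to be injective. A genuine instance is $X=\Delta[2]/\partial\Delta[2]$. There $s_0v=d_i\sigma$ lies in $\mathrm{Core}(X)(1)$, and both $(s_0v,\mathrm{id}_{[1]})$ and $(v,[1]\to[0])$ are sent to $s_0v$.
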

Thus, for any n.d.c. $X\in \sSet$ and any $Y\in \sSet$, from the Lemma~\ref{phiX} and Proposition~\ref{adj}, there is an isomorphism $\Hom(\mathrm{Core}(X),\fgt(Y))\cong\Hom(\free\circ \mathrm{Core}(X),Y)\cong \Hom(X, Y)$.
 
\subsection{Homotopy Coherent Nerve of Small Simplicial Categories}
The homotopy coherent nerve functor $\scr{N}^{\mathrm{hc}}:\sCat\to \sSet$ was first introduced in \cite{Cordier1982DiagrammeHomotopique}. In this section, we recall the definition and state some of the results that will be used later.

\begin{definition}
    Given a totally ordered set $[n]:=\{0<1<...<n\}$, the simplicial category $\mathrm{Path}[n]$ is defined as follows:
    \begin{itemize}
        \item The objects of $\mathrm{Path}[n]$ are the elements of $[n]$.
        \item Given two objects $i$ and $j$, the mapping space is defined by $\Map(i,j):=\scr{N}(P_{i,j})$, where $\scr{N}$ is the ordinary nerve functor, and $P_{i,j}$ is the full subcategory of the opposite poset $P([n])^{op}$ whose objects are subsets $S$ with $\min S=i$ and $\max S= j$, for $i\leq j$. Otherwise, $\Map(i,j):=\emptyset$.
        \item The composition $\Map(i,j)\times \Map(j,k)\to \Map(i,k)$ is the nerve of the functor $P_{i,j}\times P_{j,k}\to P_{i,k}$ where\begin{align*}
            (S\in P_{i,j}, T\in P_{j,k})\mapsto S\cup T\in P_{i,k}
        \end{align*}

    \end{itemize}
The collection $\mathrm{Path}[\bullet]=\{\mathrm{Path}[n]\}_{n\geq 0}$ is equipped with a cosimplicial structure that is constructed as follows: for any order preserving function $\lambda:[n]\to [m]$, define simplicial functor \[ \mathrm{Path}(\lambda):\mathrm{Path}[n]\to \mathrm{Path}[m]\] that takes object $i$ to $\lambda(i)$ and
\begin{align*}
    \mathrm{Path}(\lambda)_{i,j}:P_{i,j}\to P_{\lambda(i),\lambda(j)}
    \\ S\subseteq \{i<...<j\}\mapsto \lambda(S)\subseteq \{\lambda(i)<...<\lambda(j)\}
\end{align*}
\end{definition}

\begin{definition}
    \cite{Cordier1982DiagrammeHomotopique} The homotopy coherent nerve functor is defined as
    \begin{align*}
\scr{N}^{\mathrm{hc}}:=\Hom_{\sCat}(\mathrm{Path}[\bullet],-):\sCat\to \sSet
    \end{align*} There exists a left adjoint of $\scr{N}^{\mathrm{hc}}$,  which is called the rigidification functor. Following \cite{lurieHTT}, we denote this left adjoint as $\mathfrak{C}$. Notice that $\mathfrak{C}\Delta[n]=\mathrm{Path}[n]$.
\end{definition}

Given a simplicial category $\scr{C}$ and two morphisms $f, g:X\to Y$ in $\scr{C}$, $f$ is \textit{homotopic} to $g$, and denote $f\simeq g$ if there is a $1$-simplex, $H\in \Map(X,Y)(1)$ such that $d_1H=f$ and $d_0H=g$. The map $f:X\to Y$ is a homotopy equivalence if there is a $g:X\to Y$ such that $g\circ f\simeq id_X$ and $f\circ g\simeq id_Y$.

Given a simplicial set $C\in \sSet$, an edge $f\in \scr{C}(1)$ is an\textit{ isomorphism} if there are two $2$-simplices that resemble the following:
\begin{center}
     \begin{tikzcd}
                                     & y \arrow[rd, "g"] &   &  & y \arrow[rd, "g"] \arrow[rr, "s_0y"] &                   & y \\
x \arrow[ru, "f"] \arrow[rr, "s_0x"] &                   & x &  &                                      & x \arrow[ru, "f"] &  
\end{tikzcd}
\end{center}

A simplicial category is \textit{locally Kan} if its mapping spaces are Kan complexes.
\begin{lemma}
    \label{lemma1} Given a locally Kan category $\scr{C}$, the $2$-simplices of $\scr{N}^{\mathrm{hc}}\scr{C}$ are characterized by three objects $x,y,z$, three morphisms $f:x\to y$, $g:y\to z$ and $h:x\to z$, such that $g\circ f\simeq h$ via a homotopy $H\in \Map(x,y)(1)$. In particular, a morphism in $\scr{C}$ is a homotopy equivalence if and only if it is an isomorphism in $\scr{N}^\mathrm{hc}\scr{C}$.
\end{lemma}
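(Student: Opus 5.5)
We have to describe the 2-simplices of $\scr{N}^{\mathrm{hc}}\scr{C}=\Hom_{\sCat}(\mathrm{Path}[2],\scr{C})$, so the first step is to unwind $\mathrm{Path}[2]$ explicitly. Its objects are $0,1,2$. For $i<j$ with $j=i+1$, the poset $P_{i,j}$ has the single element $\{i,j\}$, so $\Map(i,j)=\Delta[0]$. The poset $P_{0,2}$ has two elements $\{0,1,2\}$ and $\{0,2\}$, and the opposite order relates them, so $\Map(0,2)=\scr{N}(P_{0,2})\cong\Delta[1]$. The only nontrivial composition is $\Map(0,1)\times\Map(1,2)\to\Map(0,2)$. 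It sends the unique point to the vertex $\{0,1\}\cup\{1,2\}=\{0,1,2\}$. Because $\mathrm{Path}[2]$ is free enough in this sense, a simplicial functor $\mathrm{Path}[2]\to\scr{C}$ amounts to the following data: objects $x,y,z$; vertices $f\in\Map(x,y)_0$ and $g\in\Map(y,z)_0$; and a map $\Delta[1]\to\Map(x,z)$, i.e.\ an edge $H$. One endpoint of $H$ is forced by functoriality to be $g\circ f$, the image of $\{0,1,2\}$; the other endpoint is a free vertex $h$, the image of $\{0,2\}$. We must check the orientation: the edge of $\scr{N}(P_{0,2}^{op})$ runs between these two vertices, and either direction suffices since $\Map(x,z)$ is Kan and homotopy is therefore symmetric. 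This gives the first assertion, with $H$ lying in $\Map(x,z)(1)$ (the statement's $\Map(x,y)$ is presumably a typo). Identity functoriality imposes no further constraints, since $\Map(i,i)=\Delta[0]$.

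We also record the faces. By the cosimplicial structure, $d_0,d_1,d_2$ of this 2-simplex are $g$, $h$ and $f$ respectively, viewed as 1-simplices. Here 1-simplices are simply morphisms, since $\mathrm{Path}[1]$ has $\Map(0,1)=\Delta[0]$. The degenerate edge $s_0x$ is the identity $id_x$, because degeneracies pull back along $\mathrm{Path}[1]\to\mathrm{Path}[0]$.

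For the second assertion, suppose first that $f:x\to y$ is a homotopy equivalence with inverse $g$. Then $g\circ f\simeq id_x$ via some edge $H$. By the description above, $(f,g,id_x,H)$ is the left 2-simplex in the picture. Likewise $f\circ g\simeq id_y$ gives the right 2-simplex, whose edges are $g:y\to x$, $f:x\to y$ and $s_0y$. Hence $f$ is an isomorphism in $\scr{N}^{\mathrm{hc}}\scr{C}$.

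Conversely, given the two 2-simplices, the first yields $g\circ f\simeq id_x$ and the second yields $f\circ g\simeq id_y$. A subtlety is that the paper's definition of isomorphism allows the two 2-simplices to use possibly different $g$'s, say $g$ and $g'$. If the picture literally uses the same $g$, we are done immediately. Otherwise $g$ is a left homotopy inverse and $g'$ is a right homotopy inverse. Standard reasoning then gives $g\simeq g\circ f\circ g'\simeq g'$, which requires that composition preserves homotopy and that homotopy is transitive. Both follow from the Kan condition: precomposition and postcomposition are simplicial maps on mapping spaces, so they send edges to edges, and transitivity and symmetry of $\simeq$ come from filling $\Lambda^2$-horns in $\Map(x,z)$.

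The main obstacle is the careful identification of $\mathrm{Path}[2]$, and especially the direction of the edge in $\Map(0,2)$. Nothing beyond the Kan hypothesis should be needed.
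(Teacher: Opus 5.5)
Your proposal is correct and follows the same route as the paper. You unwind $\mathrm{Path}[2]$: the mapping spaces $\Map(0,1)$ and $\Map(1,2)$ are points, $\Map(0,2)\cong\Delta[1]$, and the vertex $\{0,1,2\}$ is forced to go to $g\circ f$. You then read off the two 2-simplices of an isomorphism from the homotopies $g\circ f\simeq id_x$ and $f\circ g\simeq id_y$.

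Your argument is more complete than the paper's, which only sketches the forward direction. The orientation you leave open is fixed by the reverse-inclusion order on $P_{0,2}$: the edge runs from $\{0,1,2\}$ to $\{0,2\}$, so $d_1H=g\circ f$ and $d_0H=h$. This is exactly the paper's convention for $g\circ f\simeq h$.
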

\begin{proof}
    The simplicial category $\mathrm{Path}[2]$ is composed of the following, 
    \begin{align*}
        \Map(0,1)=\{[0<1]\}\cong \Delta[0]
        \\\Map(1,2)=\{[1<2]\}\cong \Delta[0]
        \\\Map(0,2)=\scr{N}\{[0<1]\hookrightarrow[0<1<2]\}\cong \Delta[1]
    \end{align*} and $[0<1]\circ[1<2]=[0<1<2]\in\Map(0,2)$. 
 For a homotopy equivalence $f$, its homotopy inverse $g$, and the homotopies $H_X:g\circ f\Rightarrow id_X$ and $H_Y:f\circ g\Rightarrow id_Y$ determine two $2$-simplices.
\end{proof}
\begin{lemma}
   \label{Kancomplex} If $\scr{C}$ is a locally Kan category such that all morphisms are homotopy equivalences, then $\scr{N}^\mathrm{hc}\scr{C}$ is a Kan complex.
\end{lemma}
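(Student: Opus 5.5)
The plan is to combine two standard facts. First, Lurie's result (HTT Proposition 1.1.5.10) says that for a locally Kan simplicial category $\scr{C}$, the simplicial set $\scr{N}^{\mathrm{hc}}\scr{C}$ is an $\infty$-category, i.e.\ it has fillers for all inner horns $\Lambda^n_k\hookrightarrow\Delta[n]$ with $0<k<n$. Second, Joyal's theorem says that an $\infty$-category whose homotopy category is a groupoid is a Kan complex. So the whole proof reduces to showing that every edge of $\scr{N}^{\mathrm{hc}}\scr{C}$ is an isomorphism in the sense defined above; the outer horns are then handled by Joyal's theorem.

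The steps, in order:

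\begin{enumerate}
\item \emph{Inner horns.} By adjunction, a horn $\Lambda^n_k\to\scr{N}^{\mathrm{hc}}\scr{C}$ is a simplicial functor $\mathfrak{C}\Lambda^n_k\to\scr{C}$. The inclusion $\mathfrak{C}\Lambda^n_k\to\mathrm{Path}[n]$ is the identity on objects and on every mapping space except $\Map(0,n)$. For $0<k<n$, the inclusion on $\Map(0,n)$ is, up to isomorphism, a pushout-product of an anodyne cube boundary inclusion $(\Delta[1])^{n-1}$. Hence extending over $\mathrm{Path}[n]$ amounts to one Kan filling in $\Map_{\scr{C}}(x_0,x_n)$, which exists because $\scr{C}$ is locally Kan. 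I would cite this step rather than reprove it.
\item \emph{Every edge is invertible.} An edge of $\scr{N}^{\mathrm{hc}}\scr{C}$ is a morphism $f:x\to y$ in $\scr{C}$, i.e.\ a vertex of $\Map(x,y)$. By hypothesis $f$ is a homotopy equivalence, so by Lemma~\ref{lemma1} it is an isomorphism in $\scr{N}^{\mathrm{hc}}\scr{C}$. One subtlety: Lemma~\ref{lemma1} is stated with homotopies oriented as $g\circ f\Rightarrow id$. Since the mapping spaces are Kan, homotopy of vertices is symmetric, so either orientation of the $2$-simplices can be produced.
\item \emph{Conclude.} Apply Joyal's theorem: an $\infty$-category in which every morphism is an isomorphism is a Kan complex. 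Equivalently, the outer horns $\Lambda^n_0$ and $\Lambda^n_n$ can be filled whenever the edge $0\to1$ (respectively $n-1\to n$) is invertible, and step 2 guarantees this.
\end{enumerate}

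The main obstacle is the outer-horn filling. If a self-contained argument is wanted instead of citing Joyal (HTT Proposition 1.2.5.1), I would argue directly for $n=2$ first, using the explicit description of $2$-simplices from Lemma~\ref{lemma1}. Given $f:x\to y$ and $h:x\to z$ with $f$ a homotopy equivalence with inverse $g$, set the missing edge to be $h\circ g$. Then $h\circ g\circ f\simeq h$, obtained by composing the homotopy $g\circ f\simeq id_x$ with $h$ in the simplicial category. This yields the required $2$-simplex. For $n\ge 3$ I would defer to Joyal's theorem.
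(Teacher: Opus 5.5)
Your proposal is correct and follows the paper's own proof: locally Kan makes $\scr{N}^{\mathrm{hc}}\scr{C}$ an $\infty$-category, Lemma~\ref{lemma1} makes every edge an isomorphism, and Joyal's theorem (the paper cites \cite[Corollary 1.4]{JOYAL2002207}) then shows it is a Kan complex. Your extra remarks on inner-horn filling and on the orientation of homotopies only spell out details the paper leaves implicit.
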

\begin{proof}
  Suppose $\scr{C}$ is a locally Kan category. Therefore, $\scr{N}^{\mathrm{hc}}\scr{C}$ is an ∞-category. By Lemma~\ref{lemma1}, every edge in $\scr{N}^{\mathrm{hc}}\scr{C}$ is an isomorphism. It follows from \cite[Corollary 1.4]{JOYAL2002207} that $\scr{N}^{\mathrm{hc}}\scr{C}$ is a Kan complex.
\end{proof}
\subsection{The Edgewise Subdivision of Simplicial Sets}\label{esd}

This section is dedicated to the functor $\mathrm{esd}^*:\sSet\to \sSet$ and some of its related constructions. This functor is well studied in \cite{KBergEsd}. However, many of the proofs are done in topology, and we would like the proof to be constructed in the simplicial setting for uniformity and simplicity. Therefore, we will prove some similar results using the simplicial homotopy theory, and these results will be used later.

We begin by recall the definition of the opposite of a simplicial set.
\begin{definition}
    Define the functor $\mathrm{op}:\Delta\to \Delta$ as follows:
\begin{align*}
         [n]=\{0<1<...<n\}\mapsto[n]=\{\widehat n<...<\widehat{1}<\widehat{0}\}
        \\ (f:[n]\to [m])\mapsto (\widehat{f}:[n]\to [m])
\end{align*} such that $\widehat{f}(\widehat{k})=\widehat{(f(k))}$.

    This induces a functor $(\bullet)^{op}:\sSet\to \sSet$ by precomposition. Given $X\in \sSet$, we refer to $X^{op}$  as the opposite of $X$. 
\end{definition}

Notice that the simplices of $X$ and $X^{op}$ are in bijection. However, they differ in the order of the faces and degeneracies. Moreover, given an ordinary category $\scr{C}\in \Cat$, $\scr{N}(\scr{C})^{op}\cong \scr{N}(\scr{C}^{op})$, because $\scr{N}([n]^{op})\cong \scr{N}([n])^{op}\cong \Delta[n]^{op}$.

We wish to show that $X$ is weakly equivalent to $X^{op}$, and we do this by constructing a zigzag of weak equivalences. One of which would involve the following construction.
\begin{definition}
   Define the functor $\mathrm{esd}:\Delta\to \Delta$ as follows:
    \begin{align*}
        [n]=\{0<1<...<n\}\mapsto[2n+1]=\{\widehat n<...<\widehat{1}<\widehat{0}<0<1<...<n\}
        \\ (f:[n]\to [m])\mapsto (\widehat{f}:[2n+1]\to [2m+1])
    \end{align*}such that $\widehat{f}({k})={f(k)}$ and $\widehat{f}(\widehat{k})=\widehat{(f(k))}$.

    This induces a functor, denoted $\mathrm{esd}^*:\sSet\to \sSet$, by precomposition. Given $X\in \sSet$, we refer to $\mathrm{esd^*}(X)$  as the edgewise subdivision of $X$.
\end{definition}
Note that $\mathrm{esd}$ determines a cosimplicial object in simplicial set $\mathrm{esd}[\bullet]$ such that
\begin{align*}
  \mathrm{esd}[\bullet]:\Delta\xrightarrow{\mathrm{esd}}\Delta\xrightarrow{\Hom_{\Delta}(-, \bullet)}\sSet
  \\ [n]\mapsto \mathrm{esd}[n]=[2n+1]\mapsto \Hom_{\Delta}(-, \mathrm{esd}[n])
\end{align*}
Notice by Yoneda's Lemma, 

\begin{align*}
    \mathrm{esd}^*(X)(n)=X(\mathrm{esd}[n])
    \cong\Hom_{\sSet}(\Hom_{\Delta}(-,\mathrm{esd}[n]),  X)
     =\Hom_{\sSet}(\mathrm{esd}[n],  X)
\end{align*}
In other words, $\mathrm{esd}^*(X)\cong \Hom_{\sSet}(\mathrm{esd}[\bullet],  X)$.
\begin{definition}
    Define natural transformations $i:id_\Delta\Rightarrow  \mathrm{esd}$ and $i^{op}:\mathrm{op}\Rightarrow\mathrm{esd}$ by 
    \begin{align*}
       i_n, i_n^{op}: [n]\to \mathrm{esd}[n]=\{\widehat n<...<\widehat{1}<\widehat{0}<0<1<...<n\}
    \end{align*}where $i_n(k)=k$ and $i_n^{op}(k)=\hat{k}$.
   
    Thus, for any simplicial set $X\in \sSet$, the above natural transformations induce simplicial maps
    \[
    \mathrm{src}_X:\mathrm{esd}^*(X)\to X,     \ \mathrm{tgt}_X:\mathrm{esd}^*(X)\to X^{op}
    \]
via the horizontal composition of natural transformation
  \begin{center}
\begin{tikzcd}
{\Delta^{op}} &&&& {\Delta^{op}} && \Set
	\arrow[""{name=0, anchor=center, inner sep=0}, "{\mathrm{op}}"{description}, shift right, curve={height=30pt}, from=1-1, to=1-5]
	\arrow[""{name=1, anchor=center, inner sep=0}, "{id_{\Delta^{op}}}"{description}, shift left, curve={height=-30pt}, from=1-1, to=1-5]
	\arrow[""{name=2, anchor=center, inner sep=0}, "{\mathrm{esd}}"{description}, from=1-1, to=1-5]
	\arrow["X", from=1-5, to=1-7]
	\arrow[Rightarrow, from=2, to=0, "{i^{op}}"{description, xshift=1pt, right}, pos=0.5, shorten <=10pt, shorten >=10pt]
	\arrow[Rightarrow, from=2, to=1, "{i}"'{description, xshift=1pt,right}, pos=0.5, shorten <=10pt, shorten >=10pt]
\end{tikzcd}
  \end{center}
  Here the direction of $i$ and $i^{op}$ are reversed because $X$ is contravariant.
\end{definition}

Observe that the functor  $\mathrm{esd}^*$ preserves limits, as it is corepresented by a cosimplicial simplicial set $\mathrm{esd}[\bullet]$. Moreover, each of the $\mathrm{esd}[n]$ is a compact object of $\sSet$, hence the functor also preserves filtered colimits. Therefore, there exists a left adjoint of $\mathrm{esd}^*$, denoted $\mathrm{esd}^!$. By Yoneda's Lemma, for any $n$, $\mathrm{esd}^!(\Delta[n])\cong \mathrm{esd}[n]$. 

For $X=\Delta[n]$, there exists 
\[i_{\Delta[n]}:\Delta[n]\to \mathrm{esd}^!\Delta[n], \ i^{op}_{\Delta[n]}:\Delta[n]^{op}\to \mathrm{esd}^!\Delta[n]\]
induced by the morphism $i_n, i_n^{op}:[n]\to \mathrm{esd}([n])$. These are the morphisms in adjunction with
\[\mathrm{src}_{\Delta[n]}:\mathrm{esd}^*(\Delta[n])\to \Delta[n],     \ \mathrm{tgt}_{\Delta[n]}:\mathrm{esd}^*(\Delta[n])\to \Delta[n]^{op}
\]
Moreover, these maps are monomorphisms of simplicial sets.

Since for every simplicial set $X\in \sSet$, 
\begin{align*}
    X\cong \colim(\Delta_{/X}\to \Delta\xrightarrow{\Delta[\bullet]}\sSet)
\end{align*}
The maps, $i_{\Delta[n]}$ and $i^{op}_{\Delta[n]}$ extends to simplicial maps $i_X:X\to \mathrm{esd}^!X$ and  $i_X^{op}:X^{op}\to \mathrm{esd}^!X$ that are in adjunction with $\mathrm{src}_X$ and $\mathrm{tgt}_X$, respectively.

For some simplicial sets, the maps $i_X$ and $i_X^{op}$ are anodyne, i.e. monomorphism and weak equivalence (we suspect this to be true for all $X\in \sSet$).
\begin{lemma}
 \label{anodyneesd}
 For any $n$ and $0\leq k \leq n$,
 \begin{align*}
     i_{\Delta[n]}:\Delta[n]\to \mathrm{esd}^!\Delta[n],
     \ i_{\partial\Delta[n]}:\partial\Delta[n]\to \mathrm{esd}^!\partial\Delta[n],
     \ i_{\Lambda^n_k}:\Lambda^n_k\to \mathrm{esd}^!\Lambda^n_k,
 \end{align*} and their $\mathrm{op}$ counterparts are anodyne.
\end{lemma}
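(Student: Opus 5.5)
Since $\mathrm{esd}^!$ is a left adjoint with $\mathrm{esd}^!\Delta[n]=\mathrm{esd}[n]\cong\Delta[2n+1]$, the first thing to do is make $\mathrm{esd}^!$ explicit on each of the three simplicial sets. Then I would check that $i$ is a monomorphism and that both sides are weakly contractible, or both weakly equivalent to a sphere with $i$ inducing an equivalence.

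\textbf{Step 1: $\Delta[n]$.} Here $i_{\Delta[n]}$ is the map $\Delta[n]\to\Delta[2n+1]$ induced by the injection $i_n$. It is the inclusion of the face spanned by the vertices $0<1<\dots<n$. A map between nerves of posets induced by an injective order-preserving map is a monomorphism. Both $\Delta[n]$ and $\Delta[2n+1]$ are contractible, so the map is a weak equivalence and hence anodyne. The same argument works for $i^{op}_{\Delta[n]}$, using the face $\widehat n<\dots<\widehat 0$ and $\Delta[n]^{op}\cong\Delta[n]$.

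\textbf{Step 2: boundaries and horns.} I would write $\partial\Delta[n]$ and $\Lambda^n_k$ as colimits (coequalizers) of the standard simplices $\Delta[n-1]$ glued along $\Delta[n-2]$. Since $\mathrm{esd}^!$ preserves colimits, $\mathrm{esd}^!\partial\Delta[n]$ is the union inside $\Delta[2n+1]$ of the faces $\mathrm{esd}[d^j]\colon\Delta[2n-1]\hookrightarrow\Delta[2n+1]$, namely the faces omitting the pair $\{j,\widehat j\}$. The key point is that these subobjects intersect as expected: the intersection of the faces for $j\in J$ is the face spanned by the vertices outside $\{j,\widehat j : j\in J\}$, which is exactly $\mathrm{esd}[\,]$ of the corresponding face of $\Delta[n]$. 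Because of this, the colimit is computed as a union of subcomplexes, and $\mathrm{esd}^!\partial\Delta[n]\subseteq\Delta[2n+1]$ is the subcomplex of simplices missing at least one pair $\{j,\widehat j\}$.

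Under this description $i_{\partial\Delta[n]}$ is the restriction of the inclusion in Step 1, hence a monomorphism. The same holds for $\Lambda^n_k$, where we use only the pairs $j\neq k$.

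\textbf{Step 3: weak equivalences.}
\begin{itemize}
\item For the horn, $\mathrm{esd}^!\Lambda^n_k$ is a union of simplices $\Delta[2n-1]$ all containing the common vertices $k,\widehat k$. Every finite intersection of them is a nonempty face. By the nerve lemma, or more simply by inductively gluing contractible pieces along contractible intersections using that a pushout along monomorphisms is a homotopy pushout, it is contractible. So $i_{\Lambda^n_k}$ is a mono between contractible objects, hence anodyne.
\item For the boundary, the $n+1$ faces cover $\mathrm{esd}^!\partial\Delta[n]$, all their proper intersections are nonempty simplices, and the intersection of all of them is empty. This is the same nerve as for $\partial\Delta[n]$, so $\mathrm{esd}^!\partial\Delta[n]\simeq S^{n-1}$. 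Naturality of the covers shows that $i_{\partial\Delta[n]}$ induces a map of homotopy colimits over the same poset of faces, and this map is an objectwise weak equivalence by Step 1. Hence it is a weak equivalence.
\end{itemize}
The op versions follow identically.

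The main obstacle is Step 2. One has to verify carefully that $\mathrm{esd}^!$ applied to the gluing diagram really yields the union of subcomplexes. This requires checking that the induced map from the colimit to $\Delta[2n+1]$ is injective, which amounts to the intersection identity for faces stated above.
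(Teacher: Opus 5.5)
Your proof is correct, but it takes a different route from the paper's.

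\textbf{The paper's route.} The paper declares injectivity obvious and then argues by induction on $n$ directly in geometric realizations. For the horn, it views $|\mathrm{esd}^!\Lambda^n_k|$ as glued from copies of $|\mathrm{esd}^!\Lambda^{n-1}_k|$, collapses a contractible subspace, and recognizes the quotient as a wedge of contractible pieces. For the boundary, it collapses the contractible subspace $|\mathrm{esd}^!\Lambda^n_k|\subseteq|\mathrm{esd}^!\partial\Delta[n]|$ and applies the inductive hypothesis to the remaining face to recognize a sphere.

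\textbf{Your route.} You compute $\mathrm{esd}^!\partial\Delta[n]$ and $\mathrm{esd}^!\Lambda^n_k$ once and for all, as the unions of the faces of $\Delta[2n+1]$ that omit a pair $\{j,\widehat j\}$. This rests on the vertex-set identity $\mathrm{esd}(S)\cap\mathrm{esd}(T)=\mathrm{esd}(S\cap T)$. You then argue non-inductively: a common vertex handles the horn, and a homotopy-colimit comparison over the face poset handles the boundary.

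\textbf{What your route buys.} First, it actually proves the monomorphism claim that the paper takes for granted. Second, it shows that the specific map $i_{\partial\Delta[n]}$ is a weak equivalence. The paper's argument, as written, only identifies the homotopy type of $|\mathrm{esd}^!\partial\Delta[n]|$ with that of a sphere. That does not by itself show that $i_{\partial\Delta[n]}$ induces isomorphisms on homotopy groups. Your objectwise comparison $\Delta[S]\to\Delta[S\sqcup\widehat S]$ of face diagrams closes exactly that point.

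Two small remarks:
\begin{itemize}
\item In the horn case you can bypass the nerve lemma. A union of faces of $\Delta[2n+1]$ that all contain the vertex $k$ has star-shaped realization about that vertex, hence is contractible.
\item In the boundary case, treating the colimit over the face poset as a homotopy colimit needs the latching maps to be monomorphisms. These latching maps are unions of smaller faces included into a face. Their injectivity follows from the same intersection identity, so your Step 2 already covers it.
\end{itemize}
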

\begin{proof}
    It is obvious that all maps are monomorphisms. Observe that $ \mathrm{esd}^!\Delta[n]\cong \Delta[2n+1]$, which is weakly contractible. This implies the case of $\Delta[n]$.

    We prove the case of $\Lambda^n_k$ inductively. 
    
    For $n=1$, $|\mathrm{esd}^!\Lambda^1_k|\cong |\Delta[1]|\simeq |\Lambda^1_k|$. Assume that for any $0\leq k\leq n-1$, $\mathrm{esd}^!\Lambda^{n-1}_k$ is weakly contractible. Notice that the subspace $S$ of $\Lambda^n_k$ corresponding to the intersecting $(n-2)$-simplices of the $(n-1)$-simplices comes from gluing $\Lambda^{n-1}_k$ together. Therefore, $\mathrm{esd}^!\Lambda^{n}_k$ has a subspace $\mathrm{esd}^! S$ that comes from gluing $\mathrm{esd}^!\Lambda^{n-1}_k$ together. 
    
    Each of the $|\mathrm{esd}^!\Lambda^{n-1}_k|$ is a strictly contractible subspace in $|\mathrm{esd}^! S|$. Then, contract the $|\mathrm{esd}^!\Lambda^{n-1}_k|$ in $|\mathrm{esd}^! S|$ until there is only one. Since the intersection of $|\mathrm{esd}^!\Lambda^{n-1}_k|$ with the rest of the subspace is contractible and $|\mathrm{esd}^!\Lambda^{n-1}_k|$ is contractible, then $|\mathrm{esd}^! S|$ is contractible. Therefore, $|\mathrm{esd}^!\Lambda^{n}_k|\simeq |\mathrm{esd}^!\Lambda^{n}_k|/|\mathrm{esd}^! S|$. Notice that $|\mathrm{esd}^!\Lambda^{n-1}_k|/|\mathrm{esd}^! S|$ is a bouquet of contractible spaces, therefore, it is contractible. 

    Similarly for $\partial \Delta[n]$. If $n=1$, $|\mathrm{esd}^!\partial \Delta[1]|\cong |\mathrm{esd}^!\Delta[0]^{\sqcup2}|\simeq |\partial \Delta[0]|$.
    
    Assume that $|\mathrm{esd}^!\partial \Delta[n-1]|\simeq |\partial\Delta[n-1]|$. Since $|\mathrm{esd}^!\Lambda^{n}_k|$ is a contractible subspace of $|\mathrm{esd}^!\partial \Delta[n]|$, so $|\mathrm{esd}^!\partial \Delta[n]|/|\mathrm{esd}^!\Lambda^{n}_k|$. Since $|\mathrm{esd}^!\partial \Delta[n]|-|\mathrm{esd}^!\Lambda^{n}_k|$ is the interior of $|\mathrm{esd}^!\Delta[n-1]|$, and the boundary of $|\mathrm{esd}^!\Delta[n-1]|$ is $|\mathrm{esd}^!\partial \Delta[n-1]|$. Therefore, $|\mathrm{esd}^!\partial \Delta[n]|/|\mathrm{esd}^!\Lambda^{n}_k|\simeq */S^{n-1}\simeq S^n\simeq |\partial\Delta[n]|$.

    Their opposite counterpart follows from observing $|\Delta[n]^{op}|\cong |\Delta[n]|$, $|\partial\Delta[n]^{op}|\cong |\partial\Delta[n]|$, and $|\Lambda^{n,op}_k|\cong |\Lambda^{n}_k|$.
\end{proof}
Since $\Lambda^n_k\to \Delta[n]$ is anodyne, the above also implies that $\mathrm{esd}^!\Lambda^n_k\to \mathrm{esd}^!\Delta[n]$ is anodyne.
\begin{lemma}
    For Kan complex $K\in \sSet$, the maps $\mathrm{src}_K:\mathrm{esd}^*(K)\to K$ and $\mathrm{tgt}_K:\mathrm{esd}^*(K)\to K^{op}$ are both Kan fibration. In particular, $\mathrm{esd}^*(K)$ is a Kan complex.
\end{lemma}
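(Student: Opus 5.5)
To prove that $\mathrm{src}_K$ is a Kan fibration, we will verify the right lifting property against every horn inclusion $\Lambda^n_k\to\Delta[n]$, and treat $\mathrm{tgt}_K$ symmetrically.

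\textbf{Reduction to a lifting problem in $K$.} A lifting problem
\[
\Lambda^n_k\to \mathrm{esd}^*(K), \quad \Delta[n]\to K
\]
is, by the adjunction $\mathrm{esd}^!\dashv\mathrm{esd}^*$, the same as a map $\mathrm{esd}^!\Lambda^n_k\to K$ together with a map $\Delta[n]\to K$. The compatibility condition says exactly that these two maps agree on $\Lambda^n_k$, embedded in $\mathrm{esd}^!\Lambda^n_k$ by $i_{\Lambda^n_k}$ and in $\Delta[n]$ by the horn inclusion. Indeed, $\mathrm{src}_K$ is adjoint to precomposition with $i$, and naturality of $i_X$ in $X$ gives the required compatibility. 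Hence a solution of the original problem is the same as a map $\mathrm{esd}^!\Delta[n]\to K$ extending the map
\[
P:=\mathrm{esd}^!\Lambda^n_k\cup_{\Lambda^n_k}\Delta[n]\longrightarrow K,
\]
where $\Delta[n]$ is mapped into $\mathrm{esd}^!\Delta[n]$ by $i_{\Delta[n]}$.

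\textbf{Step 2: the comparison map is anodyne.} It therefore suffices to show that the induced map
\[
j:\ \mathrm{esd}^!\Lambda^n_k\cup_{\Lambda^n_k}\Delta[n]\to \mathrm{esd}^!\Delta[n]
\]
is anodyne; the lift then exists because $K$ is Kan.

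First, $j$ is a monomorphism. Inside $\mathrm{esd}^!\Delta[n]\cong\Delta[2n+1]$, the subcomplex $\mathrm{esd}^!\Lambda^n_k$ is generated by the faces $\mathrm{esd}[d^i]$ for $i\neq k$, and $\Delta[n]$ sits as the face on the vertices $0<\dots<n$. Their intersection is exactly $\Lambda^n_k$, since a simplex of $\Delta[n]$ lies in some $\mathrm{esd}[d^i]$ precisely when it misses vertex $i$.

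Second, $j$ is a weak equivalence. By Lemma~\ref{anodyneesd}, $i_{\Lambda^n_k}$ is anodyne. Anodyne maps are stable under pushout, so $\Delta[n]\to P$ is anodyne and hence $P$ is weakly contractible. The target $\Delta[2n+1]$ is also contractible, so $j$ is a weak equivalence. This step is where I expect the main care to be needed: one must check honestly that the pushout is a subcomplex, i.e.\ that the intersection claim above holds at the level of simplices and not merely vertices. Since everything involved is a nerve of a subposet of $[2n+1]$, this should be checkable by inspecting vertex sets.

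\textbf{The case of $\mathrm{tgt}_K$.} Here the same argument applies with $i^{op}$ in place of $i$, using $\Lambda^{n,op}_k\cong\Lambda^n_{n-k}$ and the op-counterpart in Lemma~\ref{anodyneesd}. The target is $K^{op}$, which is again Kan because the horns are closed under $\mathrm{op}$.

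\textbf{Conclusion.} Finally, $K\to\Delta[0]$ is a Kan fibration, so the composite $\mathrm{esd}^*(K)\to K\to\Delta[0]$ is a Kan fibration. Hence $\mathrm{esd}^*(K)$ is a Kan complex.
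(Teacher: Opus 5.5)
Your argument is correct, and it takes a different route from the paper. The paper's proof observes only that the composite $\Lambda^n_k\to\Delta[n]\xrightarrow{i}\mathrm{esd}[n]$ is anodyne. It deduces that $\Hom_{\sSet}(\mathrm{esd}[n],K)\to\Hom_{\sSet}(\Lambda^n_k,K)$ is surjective, and stops.

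That surjectivity only says that every horn in $K$ is the image under $\mathrm{src}_K$ of the restriction of some $n$-simplex of $\mathrm{esd}^*(K)$. It never uses the two pieces of data in an actual lifting problem: the horn $\Lambda^n_k\to\mathrm{esd}^*(K)$ and the $n$-simplex $\Delta[n]\to K$. So by itself it does not verify the right lifting property, and hence not the Kan property of $\mathrm{esd}^*(K)$ either.

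Your reduction to extending along the corner map $j\colon\mathrm{esd}^!\Lambda^n_k\cup_{\Lambda^n_k}\Delta[n]\to\mathrm{esd}^!\Delta[n]$ is the right formulation. It rests on $\mathrm{src}_K$ being the mate of $i$ and on naturality of $i_X$. Your verification that $j$ is anodyne is also sound:
\begin{itemize}
\item The intersection computation by vertex sets is legitimate. Simplices of $\Delta[2n+1]$, degenerate or not, are determined by their vertex sequences, and whether a simplex lies in a face depends only on its vertex set.
\item The weak equivalence follows from pushing out the anodyne map $i_{\Lambda^n_k}$ and using that $\Delta[n]$ and $\Delta[2n+1]$ are contractible.
\end{itemize}
In short, the paper's version buys brevity, and yours actually proves the statement.

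If you want to avoid relying on Lemma~\ref{anodyneesd}, whose proof in the paper is quite informal, there is a direct argument. $P$ is the union of two kinds of faces of $\Delta[2n+1]$: the faces spanned by $\{0,\dots,n,\hat 0,\dots,\hat n\}\setminus\{i,\hat i\}$ for $i\neq k$, and the face spanned by $\{0,\dots,n\}$. Every one of these faces contains the vertex $k$, so $P$ is a cone with apex $k$ and is contractible. Then $j$ is a monomorphism between weakly contractible simplicial sets, hence anodyne.

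For $\mathrm{tgt}_K$, rewrite maps into $K^{op}$ as maps out of the opposite simplicial set. The same argument then applies with the face on $\{\hat 0,\dots,\hat n\}$ and apex $\hat k$.
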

\begin{proof}
    The function $ \Hom_{\sSet}(\Delta[n], \mathrm{esd}^*(K))\cong \Hom_{\sSet}(\mathrm{esd}[n],K)\to \Hom_{\sSet}(\Lambda^{n}_k, K)$ is induced by the anodyne map $\Lambda^{n}_k\to \Delta[n]\to\mathrm{esd}[n]$. Since $K$ is a Kan complex, the function is surjective, which implies that $\mathrm{src}_X$ is a Kan fibration. The same is true for $\mathrm{tgt}_X$.
\end{proof}
If $X$ is a Kan complex, then $\mathrm{esd}^*(X)^{\partial\Delta[n]}\to X^{\partial\Delta[n]}$ is a Kan fibration for any $n$. Therefore, it is surjective on the connected components. 

On the other hand, suppose for $f,g:{\partial\Delta[n]}\to \mathrm{esd}^*(X)$, there is $\mathrm{src}_X\circ f\simeq \mathrm{src}_X\circ g$, via homotopy $H: {\partial\Delta[n]}\times\Delta[1]\to X$.

By Lemma~\ref{anodyneesd} and the fact that the product of anodyne maps is also anodyne, we have that the map $\partial\Delta[n]\times \Delta[1]\to \mathrm{esd}^!(\partial\Delta[n])\times \mathrm{esd}^!(\Delta[1])$ is anodyne. Therefore, the map 
\[X^{\mathrm{esd}^!(\partial\Delta[n])\times \mathrm{esd}^!(\Delta[1]})\to X^{\partial \Delta[n]\times \Delta[1]}
\] is a trivial Kan fibration and thus is bijective on the connected components. Let 
\[\tilde{H}:{\mathrm{esd}^!(\partial\Delta[n])\times \mathrm{esd}^!(\Delta[1]})\to X\]
be a choice of a simplicial map in the connected component corresponding to that of $H$ in $X^{\partial \Delta[n]\times \Delta[1]}$. Precomposing $\tilde{H}$ with the canonical map $\mathrm{esd}^!(\partial\Delta[n]\times \Delta[1])\to\mathrm{esd}^!(\partial\Delta[n])\times \mathrm{esd}^!(\Delta[1])$ determines a map in $\Hom_{\sSet}(\mathrm{esd}^!(\partial\Delta[n]\times \Delta[1]), X)$, with the adjuncting map determining a homotopy between $f$ and $g$.

In other words, the function $[\partial \Delta[n], \mathrm{esd}^*(X)]\to [\partial \Delta[n], X]$ induced by $\mathrm{src}_X$ is bijective. Therefore, we have the following.

\begin{proposition}
  \label{esdopposite} For any Kan complex $K\in \sSet$, both $\rm{src}_K$ and $\mathrm{tgt}_K$ are homotopy equivalences. Moreover,  for any simplicial set $X\in \sSet$, there is a zigzag of weak homotopy equivalences
  \[X\simeq \mathrm{esd}^*(X)\simeq X^{op}\]
\end{proposition}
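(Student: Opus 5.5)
Since the text just before the statement already shows that $\mathrm{src}_K$ and $\mathrm{tgt}_K$ are Kan fibrations and induce bijections $[\partial\Delta[n],\mathrm{esd}^*(K)]\to[\partial\Delta[n],K]$ for every $n$, the first claim should follow by reading off homotopy groups. A map $\partial\Delta[n]\to K$ together with a basepoint represents an element of $\pi_{n-1}$. I will use the pointed version of the same argument: restrict to maps sending a fixed vertex to a chosen basepoint, and do the homotopy lifting relative to that vertex using the relative anodyne statement.

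That the pointed version holds rests on two facts. First, $\mathrm{src}_K$ is a Kan fibration, so maps $(\Delta[n],\partial\Delta[n])\to(K,*)$ lift, and homotopies rel boundary lift as well. Second, the injectivity argument given before the statement works relative to $\Delta[0]\subset\partial\Delta[n]$, because $\Delta[0]\to\mathrm{esd}^!\Delta[0]$ is anodyne.

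Combining these, $\mathrm{src}_K$ induces an isomorphism on $\pi_0$ and on all $\pi_{n}$ at every basepoint. It is therefore a weak equivalence between Kan complexes, and hence a simplicial homotopy equivalence by Whitehead's theorem for Kan complexes. For $\mathrm{tgt}_K$ the argument is identical, since $i^{op}$ has the same anodyne properties by Lemma~\ref{anodyneesd}, and $K^{op}$ is Kan because $(\Lambda^n_k)^{op}\cong\Lambda^n_{n-k}$.

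For an arbitrary $X$, I will pass to a fibrant replacement $j:X\to K$, taking $K=\mathrm{Ex}^\infty X$ or any functorial anodyne replacement. I will then form the commutative diagram with top row $X\leftarrow\mathrm{esd}^*X\to X^{op}$, bottom row $K\leftarrow\mathrm{esd}^*K\to K^{op}$, and vertical maps $j$, $\mathrm{esd}^*j$, $j^{op}$. The bottom maps are weak equivalences by the first part. The map $j^{op}$ is a weak equivalence because $|Y^{op}|\cong|Y|$ naturally. It remains to show that $\mathrm{esd}^*j$ is a weak equivalence, and then two-out-of-three gives the top row.

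For that, I plan to use the classical fact that there is a natural homeomorphism $|\mathrm{esd}^*Y|\cong|Y|$ (Segal edgewise subdivision; cf.\ \cite{KBergEsd}). This fact shows that $\mathrm{esd}^*$ preserves weak equivalences. One could alternatively argue cell by cell, since $\mathrm{esd}^*$ preserves colimits and $\mathrm{esd}^*\Delta[n]$ is contractible, but the homeomorphism is cleaner. Note that $\mathrm{src}_X$ is not itself the homeomorphism, so the diagram argument is still needed.

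The main obstacle is the pointed or relative refinement of the bijection on unbased homotopy classes $[\partial\Delta[n],-]$, because an unbased bijection alone does not immediately give isomorphisms on homotopy groups. I would handle it by running the earlier lifting argument with the pair $(\partial\Delta[n]\times\Delta[1],\ \partial\Delta[n]\times\partial\Delta[1]\cup\{v\}\times\Delta[1])$, which is still sent to an anodyne-equivalent pair under $i$.
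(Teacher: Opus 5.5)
Your reduction of the general case is sound, and it proves more than the paper's proof does. The paper's zigzag $X\to K\leftarrow\mathrm{esd}^*K\to K^{op}\leftarrow X^{op}$ never passes through $\mathrm{esd}^*X$. Your argument instead uses $|\mathrm{esd}^*Y|\cong|Y|$ to show that $\mathrm{esd}^*$ preserves weak equivalences, and then two-out-of-three shows that $\mathrm{src}_X$ and $\mathrm{tgt}_X$ themselves are weak equivalences. You are also right that a bijection on unbased classes $[\partial\Delta[n],-]$ does not by itself give isomorphisms on homotopy groups; the paper passes over this point. However, your Kan case, on which the general case depends, has a genuine gap.

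The inference ``$\mathrm{src}_K$ is a Kan fibration, so maps $(\Delta[n],\partial\Delta[n])\to(K,*)$ lift'' is false. Being a Kan fibration only guarantees lifts against anodyne maps, and $\partial\Delta[n]\hookrightarrow\Delta[n]$ is not anodyne; for the path fibration of $K$, such lifts exist only for null classes. So surjectivity on $\pi_n$ is not established.

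The injectivity step has the same problem. Lifting a homotopy relative to $\partial\Delta[n]\times\partial\Delta[1]\cup\{v\}\times\Delta[1]$ is a lifting problem against a monomorphism that is not a weak equivalence, so the fibration property does not apply. Lemma~\ref{anodyneesd} also says nothing about $i$ on $\partial\Delta[n]\times\Delta[1]$. Moreover, the argument before the statement, which you propose to relativize, only picks some $\tilde H$ in the right path component, with no control over its restrictions to the two ends.

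The missing idea is to use the adjunction rather than the fibration property. If $\bar u\colon\mathrm{esd}^!A\to K$ is adjoint to $u\colon A\to\mathrm{esd}^*K$, then $\mathrm{src}_K\circ u=\bar u\circ i_A$. Hence a lifting problem for $\mathrm{src}_K$ against a monomorphism $A\hookrightarrow B$ is the same as extending a map $\mathrm{esd}^!A\cup_A B\to K$ along $\mathrm{esd}^!A\cup_A B\to\mathrm{esd}^!B$. Take $A=\partial\Delta[n]$ and $B=\Delta[n]$.
\begin{itemize}
\item[] The map is injective, because inside $\Delta[2n+1]$ the subcomplex $\mathrm{esd}^!\partial\Delta[n]$ meets $i_n(\Delta[n])$ exactly in $i_n(\partial\Delta[n])$.
\item[] Its source is contractible, being obtained from $\Delta[n]$ by pushing out the trivial cofibration $i_{\partial\Delta[n]}$ of Lemma~\ref{anodyneesd}.
\item[] Its target $\Delta[2n+1]$ is contractible.
\end{itemize}
So the map is a trivial cofibration, and since $K$ is Kan the extension exists. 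Thus $\mathrm{src}_K$, and likewise $\mathrm{tgt}_K$ using $i^{op}$, is a trivial Kan fibration. This settles the first claim without computing any homotopy groups.

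Alternatively, you already cite Segal's homeomorphism, which is induced by the natural maps $\Delta^n\to\Delta^{2n+1}$, $t\mapsto\tfrac12(t_n,\dots,t_0,t_0,\dots,t_n)$. The map $|\mathrm{src}_X|$ is induced by $t\mapsto(0,\dots,0,t_0,\dots,t_n)$. The straight-line homotopy between these two maps is compatible with the affine cosimplicial structure maps. Hence $|\mathrm{src}_X|$ is homotopic to a homeomorphism for every $X$, and similarly for $\mathrm{tgt}_X$, which gives the whole proposition at once.
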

\begin{proof}
For any Kan complex $K$, the above argument implies that $\mathrm{src}_K$ and $\mathrm{tgt}_K$ are weak homotopy equivalences. Moreover, let $X\to K$ be a Kan replacement of $X$. Then there is
\begin{center}
   \begin{tikzcd}
X \arrow[r, "\simeq"] & K & \mathrm{esd}^*K \arrow[l, "\mathrm{src}_K"'] \arrow[r, "\mathrm{tgt}_K"] & K^{op} & X^{op} \arrow[l, "\simeq"']
\end{tikzcd}
\end{center}
\end{proof}
\section{Proof of The Main Theorem}\label{section4}
The goal of this section is to prove the main result of this paper, Theorem~\ref{main}, which says that given any fibrant-cofibrant object $A$ in a simplicial model category $\scr{M}$, the space $B\Aut(A)$ is weakly equivalent to $\hat{A}$.\textit{
}

In order to prove this theorem, we construct a zig zag of weak equivalences. 
    \begin{equation}
    \label{zigzag}
    \begin{tikzcd}
B\Aut(A) \arrow[rd,"\text{Prop. } \ref{first}"] &                                          & {{|\scr{N}^{\mathrm{}}\scr{C}_{A,w}|}\simeq {|\scr{N}^{\mathrm{}}\scr{C}^{op}_{A,w}|}} \arrow[ld,"\text{Prop. }\ref{second}"] \arrow[rd, "\text{Prop. } \ref{third}"] &                                              & {\|\fgt\hat{A}_\bullet\|\simeq \hat{A}} \\
                    & {{|\scr{N}^{\mathrm{hc}}\scr{C}_{A,w}|}} &                                                                                                              & {|\hat{A}'_\bullet|\simeq \|\fgt\hat{A}'_\bullet\|} \arrow[ru,"\text{Prop. } \ref{fourth}"] &                                
\end{tikzcd}
    \end{equation}
  
\subsection{The Weak Homotopy Equivalence \protect\boldmath$B\Aut(A)\to |\scr{N}^{\mathrm{hc}}\scr{C}_{A,w}|$ }\label{section41}

This section is dedicated to the first weak homotopy equivalence of Diagram~(\ref{zigzag}). Recall the definition of a simplicial monoid.
\begin{definition}
    A simplicial monoid is a simplicial set $G$, equipped with a simplicial map $\varphi:G\times G\to G$ and $e: \Delta[0]\to G$, such that the following diagrams commute:
    \begin{center}
       \begin{tikzcd}
G\times G\times G \arrow[r, "id\times\varphi"] \arrow[d, "\varphi\times id"'] & G\times G \arrow[d, "\varphi"] & G \arrow[d, "e \times id"'] \arrow[r, "id \times e"] \arrow[rd, "id"] & G\times G \arrow[d, "\varphi"] \\
G\times G \arrow[r, "\varphi"]                                                & G                              & G\times G \arrow[r, "\varphi"]                                        & G                             
\end{tikzcd}
    \end{center}
\end{definition}
Similar to the construction of the classifying space of a topological group, there exists the concept of the \textit{classifying space} of a simplicial monoid. Given a small simplicial category $\scr{C}$, let \[
\scr{M}\mathrm{or}(\scr{C}):=\coprod_{A,B\in \scr{O}\mathrm{bj}(\scr{C})}\Map_{\scr{C}}(A,B)\]
the \textit{enriched nerve} $\scr{N}^\Delta\scr{C}$ is the bisimplicial set defined as follows:
\begin{align*}
    \scr{N}^\Delta\scr{C}(0):=\scr{O}\mathrm{bj}(\scr{C})
    \end{align*}Here $\scr{O}\mathrm{bj}(\scr{C})$ is viewed as a discrete simplicial set, and 
    \begin{align*}
 \scr{N}^\Delta\scr{C}(n):=\scr{M}\mathrm{or}(\scr{C})\underset{\scr{O}\mathrm{bj(\scr{C})}}{\times}\scr{M}\mathrm{or}(\scr{C})...\scr{M}\mathrm{or}(\scr{C})\underset{\scr{O}\mathrm{bj(\scr{C})}}{\times}\scr{M}\mathrm{or}(\scr{C}) \text{\ ($n$ times)}
\end{align*}
The face $d_i$ is the composition of the $i$th and $(i+1)$th  morphisms, and the degeneracy $s_i$ is the insertion of identity at the $i$th position. 

\begin{definition}
    Let $G$ be a simplicial monoid, and let $\scr{C}_G$ be the simplicial category with one object $\mathbb{I}$ and $\Map(\mathbb{I},\mathbb{I}):=G$. The classifying space of $G$ is defined to be the diagonal of the enriched nerve of $\scr{C}_G$; that is, $BG:=\mathrm{diag}\scr{N}^{\Delta}(\scr{C}_G)$.
\end{definition}
The simplicial monoid that we concern is $\Aut(A)$.
\begin{definition}
    Let $\scr{M}$ be a simplicial model category and let $A\in \scr{M}$ be a fibrant-cofibrant object. We define $\Aut(A)$ to be the connected components of $ \Map_\scr{M}(A,A)$ that are weak equivalences. Since weak equivalences are closed under composition, this is a simplicial monoid. 
\end{definition}
Of course, since $\Aut(A)$ comes from a simplicial model category, it is natural to consider the homotopy coherent nerve instead of the enriched nerve. In fact, we have the following result. 
\begin{lemma}
\label{lemma42} Let $G$ be a simplicial monoid such that the underlying simplicial set is a Kan complex. Then there exists a weak homotopy equivalence $BG\to \scr{N}^{\mathrm{hc}}\scr{C}_G$.
\end{lemma}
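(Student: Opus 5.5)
The plan is to build a natural comparison map and then check that it is a weak equivalence by comparing based loop spaces. Both sides are reduced: $BG$ has a single vertex $\mathbb{I}$, and so does $\scr{N}^{\mathrm{hc}}\scr{C}_G$, since $\mathrm{Path}[0]$ has one object and $\scr{C}_G$ has one object. Both are also connected, so it suffices to produce a map that is an isomorphism on $\pi_1$ and induces a weak equivalence on loop spaces.

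\textbf{Step 1: the map.} I would write $BG=\mathrm{diag}\,\scr{N}^\Delta\scr{C}_G$ as the diagonal of the bisimplicial set $([m],[n])\mapsto \scr{N}(\scr{C}_{G_m})_n$. Here $\scr{C}_{G_m}$ is the ordinary one-object category with monoid $G_m$, so its nerve is a simplicial set. A $k$-simplex of $BG$ is then a tuple $(g_1,\dots,g_k)$ with $g_l\in G_k$, which I view as maps $g_l:\Delta[k]\to G$. To send it to a $k$-simplex of $\scr{N}^{\mathrm{hc}}\scr{C}_G$, I need a simplicial functor $\mathrm{Path}[k]\to\scr{C}_G$, that is, maps $\scr{N}(P_{i,j})\to G$ compatible with the union composition.
\begin{itemize}
\item First I would choose natural maps $c_{i,j}:\scr{N}(P_{i,j})\to\Delta[k]$ compatible with union, for instance induced by a poset map $P_{i,j}\to[k]$, or simply the constant map at the vertex $k$.
\item Then I would send $\scr{N}(P_{i,j})$ to the pointwise product $g_{i+1}\cdots g_j$ evaluated along $c_{i,j}$. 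The constant choice is the easiest to check: it makes the image of every simplex a strict functor factoring through $G_k$, so functoriality in $[k]$ follows from the naturality of the face and degeneracy maps of $G$.
\end{itemize}
Equivalently, the map is adjoint to a simplicial functor $\mathfrak{C}(BG)\to\scr{C}_G$. This can be assembled levelwise from the identifications $\scr{N}^{\mathrm{hc}}$(discrete category) $=\scr{N}$, using the map $\mathfrak{C}\scr{N}\mathscr{D}\to\mathscr{D}$ for each discrete $\scr{C}_{G_m}$.

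\textbf{Step 2: compare loop spaces.} On the target side, $\scr{N}^{\mathrm{hc}}\scr{C}_G$ is an $\infty$-category because $G$ is Kan. By the standard comparison of mapping spaces (Lurie, HTT \S2.2), its right mapping space $\mathrm{Hom}^R(\mathbb{I},\mathbb{I})$ is weakly equivalent to $\Map(\mathbb{I},\mathbb{I})=G$. If $\pi_0G$ is a group, which holds for $G=\Aut(A)$, then every edge is an isomorphism, so Lemma~\ref{lemma1} and Lemma~\ref{Kancomplex} make the target a Kan complex. Its loop space is then $\mathrm{Hom}^R(\mathbb{I},\mathbb{I})\simeq G$.

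On the source side, for grouplike $G$ the classical bar-construction argument (the realization lemma for bisimplicial sets, or the path fibration $EG\to BG$) gives $\Omega|BG|\simeq|G|$. The remaining work is to check that the composite $G\simeq\Omega BG\to\Omega\scr{N}^{\mathrm{hc}}\scr{C}_G\simeq G$ is homotopic to the identity. I would do this on $1$-simplices, where the map sends $g\in G_1$ to the edge $g$, and then on the cylinder $\Delta[1]\times\Delta[m]$ parametrizing $m$-simplices of the loop space. Once the loop map is an equivalence, $\pi_1$ is $\pi_0G$ on both sides, so the map is an isomorphism on $\pi_1$; together with the loop-space equivalence this gives a weak equivalence of connected spaces.

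\textbf{Expected obstacle.} The main difficulty is Step 1 together with its compatibility in Step 2. The map must be a genuine simplicial map, compatible with the union composition in $\mathrm{Path}[k]$ and with all faces and degeneracies. It must also be nondegenerate enough on the cubes $\scr{N}(P_{i,j})$ that the induced map on mapping spaces is the expected equivalence, rather than collapsing the homotopies. If the constant choice of $c_{i,j}$ loses too much information, I would instead use the last-vertex map $P_{i,j}\to[k]$ and verify union-compatibility directly.

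A fallback, if the explicit map is awkward, is to compare both sides through the bisimplicial set $[m]\mapsto\scr{N}^{\mathrm{hc}}(\scr{C}_{G_m})=\scr{N}(\scr{C}_{G_m})$. I would use the diagonal on one side and a ``levelwise-to-coherent'' map on the other, and conclude by the realization lemma, since the levels are weakly constant because $G$ is Kan.
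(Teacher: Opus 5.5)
Your proposal has two genuine gaps: the map in Step~1 is not simplicial as constructed, and Step~2 only handles grouplike $G$.

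\textbf{Step~1.} You evaluate the whole product $g_{i+1}\cdots g_j$ along a single map $c_{i,j}\colon\scr{N}(P_{i,j})\to\Delta[k]$. Then the identity $F(S\cup T)=F(T)F(S)$, for independent simplices $S$ of $\scr{N}(P_{i,j})$ and $T$ of $\scr{N}(P_{j,p})$, forces $c_{i,p}(S\cup T)=c_{i,j}(S)=c_{j,p}(T)$. To see this, take $G$ a free simplicial group and set $g_{i+1}=\dots=g_j=e$, or $g_{j+1}=\dots=g_p=e$. So all $c_{i,j}$ are constant at one common vertex, and no such choice is natural in $[k]$.

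For the vertex $k$, write $g|_v$ for the $v$-th vertex of $g$. The $2$-simplex $(g_1,g_2)$ goes to the strict simplex with edges $g_1|_2,g_2|_2$, whose $d_2$-face is $g_1|_2$. But $d_2(g_1,g_2)=(d_2g_1)$ goes to $g_1|_1$, so the map does not commute with $d_2$. Your alternative, the last-vertex map, is $S\mapsto\max S=j$. It is constant on $P_{i,j}$ with a value depending on $j$, so it breaks union-compatibility.

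The levelwise description via $\mathfrak{C}\scr{N}\scr{C}_{G_m}\to\scr{C}_{G_m}$, and your fallback, hide the same problem. The diagonal is the coend of the $\scr{N}\scr{C}_{G_m}\times\Delta[m]$, $\mathfrak{C}$ does not preserve these products, and handling the $\Delta[m]$-factor is exactly what the evaluation maps must do.

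What is missing is that each factor needs its own evaluation map, depending only on the part of $S$ near that factor. For example, applying $\nu_l$ vertexwise to a chain $S_0\supseteq\dots\supseteq S_m$, set
\[F(S)=\prod_{l=i+1}^{j}\nu_l(S)^{*}g_l,\qquad \nu_l(S)=\min\{s\in S\mid s\geq l\}.\]
This works because of three properties:
\begin{itemize}
\item $\nu_l\colon P_{i,j}\to[k]$ is order preserving;
\item $\nu_l(S\cup T)$ equals $\nu_l(S)$ for $l\leq j$ and $\nu_l(T)$ for $l>j$, which gives union-compatibility;
\item $\theta(\nu_{l'}(S'))=\nu_l(\theta(S'))$ whenever $\theta(l'-1)<l\leq\theta(l')$, which is exactly naturality in $[k]$.
\end{itemize}

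\textbf{Step~2.} Even with a correct map, this step proves less than the lemma. The statement covers every simplicial monoid whose underlying simplicial set is Kan, but you use that $\pi_0G$ is a group twice:
\begin{itemize}
\item to make $\scr{N}^{\mathrm{hc}}\scr{C}_G$ a Kan complex with loop space $\mathrm{Hom}^R(\mathbb{I},\mathbb{I})\simeq G$;
\item to get $\Omega|BG|\simeq|G|$.
\end{itemize}
For non-grouplike $G$ the target is only a quasi-category, and $\Omega|BG|$ is the group completion rather than $|G|$. Your loop-space comparison therefore says nothing in that case.

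The grouplike case would suffice for $G=\Aut(A)$ in Proposition~\ref{first}. Even there, the decisive check is only announced: that $G\simeq\Omega BG\to\Omega\scr{N}^{\mathrm{hc}}\scr{C}_G\simeq G$ is homotopic to the identity. An edge of $\scr{N}^{\mathrm{hc}}\scr{C}_G$ is a vertex of $G$; with the map above, $g\in G_1$ goes to the edge $d_0g$, not to ``$g$''. So an explicit homotopy still has to be built.

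The paper avoids all of this by quoting Arakawa's result that $\mathrm{diag}\,\scr{N}^{\Delta}\scr{C}\to\scr{N}^{\mathrm{hc}}\scr{C}$ is a weak homotopy equivalence for every locally Kan $\scr{C}$. It applies this to $\scr{C}=\scr{C}_G$, with no grouplike hypothesis.
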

\begin{proof}
    It was shown in \cite[Corollary 1.8]{arakawa2025classificationdiagramssimplicialcategories} that given a locally Kan category $\scr{C}$, there is a weak homotopy equivalence $\mathrm{diag}\scr{N}^{\Delta}\scr{C}\to \scr{N}^{\mathrm{hc}}\scr{C}$. Applying this to $\scr{C}=\scr{C}_G$, we get the desired result.
\end{proof}

\begin{definition}
    Given any simplicial model category $\scr{M}$ and any fibrant-cofibrant object $A\in \scr{M}$, let $\scr{C}_A$ be the category (viewed as the full simplicial subcategory of $\scr{M}$) described in Definition~\ref{CAcategory}. Define $\scr{C}_{A,w}$ to be the subcategory of $\scr{C}_A$ whose objects are the same, with the mapping space the connected components of the mapping space of $\scr{C}_A$ that are weak equivalences.
\end{definition}
First of all, since objects in $\scr{C}_A$ are fibrant-cofibrant in $\scr{M}$, the category $\scr{C}_A$ and the subcategory $\scr{C}_{A,w}$ are locally Kan categories. Moreover, by the well known result of  \cite[II.1, Theorem 1.10 (Whitehead)]{goerss-jardine}, we have that weak equivalences between fibrant-cofibrant objects are homotopy equivalences. Therefore, due to Lemma~\ref{Kancomplex}, the homotopy coherent nerve of $\scr{C}_{A,w}$, $\scr{N}^{\mathrm{hc}}\scr{C}_{A,w}$, is a Kan complex. 

Define a simplicial map $in_A:B\Aut(A)\to \scr{N}^{\mathrm{hc}}\scr{C}_{A,w}$ to be the composition of $B\Aut(A)\to \scr{N}^{\mathrm{hc}}\scr{C}_{\Aut(A)}$ with the homotopy coherent nerve of the functor $\scr{C}_{\Aut(A)}\to \scr{C}_{A,w}$ that takes $\mathbb{I}$ to $A$, and $\Aut(A)$ to $\Map_{ \scr{C}_{A,w}}(A,A)=\Aut(A)$.

\begin{proposition}\label{first}
    Let $\scr{M}$ be a simplicial model category and let $A\in \scr{M}$ be fibrant-cofibrant, and let  $\scr{C}_{A,w}$ be the category above. Then $in_A:B\Aut(A)\to  \scr{N}^{\mathrm{hc}}\scr{C}_{A,w}$ is a weak homotopy equivalence.
\end{proposition}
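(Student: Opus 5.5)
The map $in_A$ factors as
\[
B\Aut(A)\longrightarrow \scr{N}^{\mathrm{hc}}\scr{C}_{\Aut(A)}\xrightarrow{\ \scr{N}^{\mathrm{hc}}(j)\ } \scr{N}^{\mathrm{hc}}\scr{C}_{A,w},
\]
where $j:\scr{C}_{\Aut(A)}\to\scr{C}_{A,w}$ is the inclusion of the full simplicial subcategory on the single object $A$. The first map is a weak homotopy equivalence by Lemma~\ref{lemma42}. This applies because $\Aut(A)$ is a union of path components of the Kan complex $\Map_\scr{M}(A,A)$ ($A$ being fibrant-cofibrant), and hence is itself a Kan complex. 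By two-out-of-three, it therefore suffices to show that $\scr{N}^{\mathrm{hc}}(j)$ is a weak homotopy equivalence.

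\textbf{Step 1: $j$ is a Dwyer--Kan equivalence.} By construction $j$ is fully faithful on mapping spaces, since $\Map_{\scr{C}_{A,w}}(A,A)=\Aut(A)$. It is also essentially surjective on homotopy categories. Indeed, every object $X$ of $\scr{C}_A$ is fibrant-cofibrant and weakly equivalent to $A$ by \cite[Lemma 3.1]{tsopmene2024classificationhomogeneousfunctorsmanifold}. Between fibrant-cofibrant objects a zigzag of weak equivalences can be replaced by a single map $A\to X$ that is a weak equivalence, hence a homotopy equivalence by the Whitehead theorem cited above. This map is a vertex of $\Map_{\scr{C}_{A,w}}(A,X)$, and every morphism of $\scr{C}_{A,w}$ is invertible up to homotopy. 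So $j$ is a weak equivalence of simplicial categories between locally Kan categories.

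\textbf{Step 2: pass to nerves.} Both $\scr{C}_{\Aut(A)}$ and $\scr{C}_{A,w}$ are locally Kan, so they are fibrant in the Bergner model structure. The functor $\scr{N}^{\mathrm{hc}}$ is right Quillen for the Bergner and Joyal model structures, and it sends weak equivalences between fibrant objects to categorical equivalences. Hence $\scr{N}^{\mathrm{hc}}(j)$ is a categorical equivalence of quasi-categories. By Lemma~\ref{Kancomplex} both nerves are Kan complexes: for the one-object category, every vertex of $\Aut(A)$ is a homotopy equivalence. A categorical equivalence between Kan complexes is a weak homotopy equivalence, which finishes the argument.

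\textbf{Main obstacle.} The delicate point is Step 2, namely citing the right Quillen property of $\scr{N}^{\mathrm{hc}}$ and its preservation of weak equivalences between fibrant objects (\cite{lurieHTT}). If a more hands-on argument is wanted, I would instead check directly that $\scr{N}^{\mathrm{hc}}(j)$ induces a bijection on $\pi_0$ (both sides are connected by Step 1) and an equivalence on loop spaces at $A$. The loop space of the Kan complex $\scr{N}^{\mathrm{hc}}\scr{C}$ at $A$ is computed by the right mapping space of $\scr{N}^{\mathrm{hc}}\scr{C}$, which is equivalent to $\Map_{\scr{C}}(A,A)$ for locally Kan $\scr{C}$. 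Since $j$ is the identity on this mapping space, $\scr{N}^{\mathrm{hc}}(j)$ is an equivalence on loop spaces, and a map of connected Kan complexes that is an equivalence on loop spaces at a basepoint is a weak equivalence.
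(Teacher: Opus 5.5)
Your proposal is correct and follows the paper's proof. You factor $in_A$ through $\scr{N}^{\mathrm{hc}}\scr{C}_{\Aut(A)}$, handle the first map by Lemma~\ref{lemma42}, and show that $\scr{C}_{\Aut(A)}\to\scr{C}_{A,w}$ is a Dwyer--Kan equivalence of locally Kan categories, which $\scr{N}^{\mathrm{hc}}$ (right Quillen for Bergner/Joyal) turns into a categorical, hence weak homotopy, equivalence; you also supply details the paper leaves implicit, namely that $\Aut(A)$ is Kan and that essential surjectivity follows from \cite[Lemma 3.1]{tsopmene2024classificationhomogeneousfunctorsmanifold} and Whitehead, while your loop-space argument is a valid but unnecessary backup.
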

\begin{proof}
          Recall that the adjoint pair $\mathfrak{C}\dashv \scr{N}^{\mathrm{hc}}:\sSet\to \sCat$ is a Quillen equivalence between the Dwyer-Kan-Bergner model structure of $\sCat$ and the Joyal model structure of $\sSet$. So if there is a simplicial functor $F:\scr{C}\to \scr{D}$ between locally Kan categories such that:
        \begin{itemize}
            \item $\mathrm{Ho}(F):\mathrm{Ho}(\scr{C})\to \mathrm{Ho}(\scr{D})$ is essentially surjective
            \item For any $x,y\in \scr{C}$, $F_{x,y}:\Map_{\scr{C}}(x,y)\to \Map_{\scr{D}}(F(x),F(y))$ is a weak homotopy equivalence.

        \end{itemize}
    Then $\scr{N}^{\mathrm{hc}}(F)$ is a categorical equivalence of $\sSet$, which implies weak homotopy equivalence. We see that the simplicial functor $\scr{C}_{\Aut(A)}\to \scr{C}_{A,w}$ satisfies the criteria, so $\scr{N}^\mathrm{hc}\scr{C}_{\Aut(A)}\to \scr{N}^\mathrm{hc}\scr{C}_{A,w}$  is a weak homotopy equivalence. Moreover, by Lemma~\ref{lemma42}, we have $B\Aut(A)\to \scr{N}^\mathrm{hc}\scr{C}_{\Aut(A)}$ is a weak homotopy equivalence. Therefore, their composition $in _A$ is a weak homotopy equivalence.
\end{proof}

\subsection{The Weak Homotopy Equivalence \protect\boldmath$\scr{N}\scr{C}_{A,w}\to \scr{N}^{\mathrm{hc}}\scr{C}_{A,w}$}
\label{section42}

In this section, we will show the second weak homotopy equivalence in Diagram~\ref{zigzag}. We first define the map.
\begin{definition}
    For all $n\geq 0$, there is an inclusion $[n]\hookrightarrow\mathrm{Path}[n]$ that takes objects $i\in [n]$ to $i\in \mathrm{Path}[n]$ and takes morphism $(i)\to (i+j)$ to $[i<i+1<...<i+j]\in \Map(i,i+j)$.
    
      For a small simplicial category $\scr{C}$, there exists a natural simplicial map
\[\varphi_{\scr{C}}:\scr{N}\scr{C}\to \scr{N}^{\mathrm{hc}}\scr{C}
\]that is induced by inclusion $[n]\hookrightarrow\mathrm{Path}[n]$. 
\end{definition}

The map in the title is the map $\varphi_A:=\varphi_{\scr{C}_{A,w}}:\scr{N}\scr{C}_{A,w}\to \scr{N}^{\mathrm{hc}}\scr{C}_{A,w}$ for the small simplicial category $\scr{C}_{A,w}$. The map $\varphi_{\scr{C}}$ is generally not a weak homotopy equivalence for an arbitrary small simplicial category.  Therefore, we will need several results to show that $\varphi_A$ is a weak homotopy equivalence.

First, we recall a (slightly weaker version of) proposition in \cite{lurieHA}. 
\begin{proposition}
  \label{ordinarynerve}  \cite[Proposition 1.3.4.7.]{lurieHA} Let $\scr{C}$ be a locally Kan category with a class of morphisms $\scr{W}$ of $\scr{C}$ such that 
    \begin{itemize}
        \item $\scr{W}$ contains all the isomorphisms in $\scr{C}$.
        \item $\scr{W}$ has $2$-out-of-$3$ property.
        \item There is a tensoring of $\Delta[1]$ on $\scr{C}$, i.e., there is a functor
\begin{align*}
    \Delta[1]\otimes: \scr{C}\to \scr{C}
\end{align*} and for any $A,B\in \scr{C}$, an isomorphism of simplicial sets 
\begin{align*}
    h_{A,B}:\Map_\scr{C}(\Delta[1]\otimes A,B)\to \Map_{\sSet}(\Delta[1], \Map_\scr{C}(A,B))
\end{align*}
\item For any $A\in \scr{C}$, the canonical map $\Delta[1]\otimes A\to A$ is in $\scr{W}$.

    \end{itemize}
Then the natural simplicial map $\varphi:\scr{N}\scr{C}\to \scr{N}^{\mathrm{hc}}\scr{C}$ that determines a weak equivalence of marked simplicial sets $(\scr{N}\scr{C},\scr{W})\to (\scr{N}^{\mathrm{hc}}\scr{C}, \scr{W})$.
\end{proposition}
Let $\scr{M}$ be our simplicial model category and let $\scr{C}_{A,w}$ be the same as in Section \ref{section41}. We wish to use the above proposition to prove Proposition~\ref{second}, which is the main result of this section. The issue for $\scr{C}_{A,w}$ is that it might not admit a tensoring of $\Delta[1]$. We circumvent this issue as follows. Defintion~\ref{CAcategory} characterizes $\scr{C}_A$ through several criteria, one of which requires $\scr{C}_A$ to be closed under some cylinder object functor $Z:\scr{C}_A\to \scr{C}_A$. This functor $Z$ is arbitrary. On the other hand, the simplicial model category $\scr{M}$ has a tensoring of any simplicial set by definition, and thus, the functor $\Delta[1]\otimes-:\scr{M}\to \scr{M}$ exists. It is a standard fact that $\Delta[1]\otimes -$ is a good cylinder object functor. Therefore, we may first choose the functor $Z$ that defines our category $\scr{C}_A$ to be $\Delta[1]\otimes-$. In order to distinguish this category we obtained from $\Delta[1]\otimes-$ from others, we denote the category as $\scr{C}_A'$, and we denote $\scr{C}_{A,w}'$ to be the subcategory of weak equivalences of $\scr{C}_A$, as in Section~\ref{section41}.
    
\begin{lemma}
   \label{BCzaif} For any $B,C\in\scr{C}_{A,w}'$, there is an isomorphism 
    \begin{align*}
        h_{B,C}:\Map_{\scr{C}_{A,w}'}(\Delta[1]\otimes B,C)\to \Map_{\sSet}(\Delta[1], \Map_{\scr{C}_{A,w}'}(B,C))
    \end{align*} 
\end{lemma}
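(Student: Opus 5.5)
The plan is to obtain $h_{B,C}$ by restricting the tensoring isomorphism that $\scr{M}$ carries as a simplicial model category. For any $B,C\in\scr{M}$ we have the natural isomorphism of simplicial sets
\[
\Map_{\scr{M}}(\Delta[1]\otimes B,C)\cong \Map_{\sSet}(\Delta[1],\Map_{\scr{M}}(B,C)).
\]
First I would check that both sides of the desired map are defined in $\scr{C}'_{A,w}$. Since $\scr{C}'_A$ is built with $Z=\Delta[1]\otimes-$, we have $\Delta[1]\otimes B\in\scr{C}'_A$ whenever $B\in\scr{C}'_A$. Since $\scr{C}'_A$ is a full simplicial subcategory of $\scr{M}$, the displayed isomorphism holds verbatim with $\Map_{\scr{C}'_A}$ on both sides. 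The remaining work is to show that it restricts to an isomorphism between the subcomplexes of weak-equivalence components:
\[
\Map_{\scr{C}'_{A,w}}(\Delta[1]\otimes B,C)\subseteq \Map_{\scr{C}'_A}(\Delta[1]\otimes B,C),\qquad \Map_{\sSet}(\Delta[1],\Map_{\scr{C}'_{A,w}}(B,C))\subseteq \Map_{\sSet}(\Delta[1],\Map_{\scr{C}'_A}(B,C)).
\]

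\textbf{Identifying the second subcomplex.} An $n$-simplex of $\Map_{\sSet}(\Delta[1],\Map(B,C))$ is a map $\Delta[1]\times\Delta[n]\to\Map(B,C)$. Since $\Delta[1]\times\Delta[n]$ is connected, such a map lands entirely in one path component, so it lies in the $w$-part exactly when one (equivalently every) vertex is a weak equivalence. On the other side, an $n$-simplex of $\Map(\Delta[1]\otimes B,C)$ corresponds to a map $\Delta[n]\otimes\Delta[1]\otimes B\to C$. It lies in the $w$-part exactly when its vertices $\Delta[1]\otimes B\to C$ are weak equivalences.

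\textbf{Matching vertices.} Under the adjunction, a vertex $H:\Delta[1]\otimes B\to C$ corresponds to a path $\Delta[1]\to\Map(B,C)$ whose endpoints are $H\circ j_0$ and $H\circ j_1$, where $j_\epsilon:B\to\Delta[1]\otimes B$ are the two inclusions. Because $B$ is cofibrant, each $j_\epsilon$ is a weak equivalence, being a section of the weak equivalence $\Delta[1]\otimes B\to B$. By 2-out-of-3, $H$ is a weak equivalence iff $H\circ j_0$ is, iff the corresponding path lies in the $w$-components. Since the isomorphism is simplicial and compatible with vertices of simplices, it therefore carries the $w$-subcomplex onto the $w$-subcomplex.

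\textbf{Main obstacle.} The difficult step is interpretive rather than computational: the statement needs $\Delta[1]\otimes B$ to be an object of $\scr{C}'_{A,w}$, which relies on the choice of cylinder functor made just before the lemma. It also needs the identification of $w$-components via vertices, which requires $\Map(B,C)$ to be Kan, so that path components are detected by vertices; this holds because $B$ and $C$ are fibrant-cofibrant. Finally, naturality of the tensoring isomorphism ensures that $h_{B,C}$ is compatible with composition, as Proposition~\ref{ordinarynerve} will require.
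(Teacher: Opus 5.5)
Your proposal is correct and follows the same route as the paper: you restrict the tensoring isomorphism of $\scr{M}$, which holds on the full subcategory $\scr{C}'_A$, to the weak-equivalence components, and then identify the image. Your vertex-matching argument, using the weak equivalences $j_\epsilon$ and 2-out-of-3, justifies the image identification that the paper only states as an observation; the Kan hypothesis you invoke is not actually needed, since every simplex of any simplicial set lies in a single connected component.
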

\begin{proof}
    First, since $\scr{M}$ is a simplicial model category, and $\scr{C}_A'$ is by definition a full subcategory of $\scr{M}$, there are isomorphisms
    \begin{align*}
           h_{B,C}':\Map_{\scr{C}_{A}'}(\Delta[1]\otimes B,C)\to \Map_{\sSet}(\Delta[1], \Map_{\scr{C}_{A}'}(B,C))
    \end{align*} induced from the tensoring of $\Delta[1]$ in $\scr{M}$.
   We define $h_{B,C}$ to be the restriction of $h_{B,C}'$ on the subsimplicial set
    \begin{align*}
        \Map_{\scr{C}_{A,w}'}(\Delta[1]\otimes B,C)\subseteq \Map_{\scr{C}_{A}'}(\Delta[1]\otimes B,C)
    \end{align*}which is by definition the connected components of $\Map_{\scr{C}_{A}'}(\Delta[1]\otimes B,C)$ of weak equivalences. This is then an isomorphism onto the image.
    On the other hand, observe that the simplicial set
    \begin{align*}
        \Map_{\sSet}(\Delta[1], \Map_{\scr{C}_{A,w}'}(B,C))\hookrightarrow \Map_{\sSet}(\Delta[1], \Map_{\scr{C}_{A}'}(B,C))
    \end{align*} is precisely the image of the $h_{B,C}$. The lemma then follows.
\end{proof}

Therefore, combining Proposition~\ref{ordinarynerve} with Lemma~\ref{BCzaif}, we have the following.
\begin{proposition}
   \label{anchor} The natural simplicial map $\varphi:\scr{N}\scr{C}_{A,w}'\to \scr{N}^{\mathrm{hc}}\scr{C}_{A,w}'$ is a weak homotopy equivalence.
\end{proposition}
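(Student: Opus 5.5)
The plan is to apply Proposition~\ref{ordinarynerve} to $\scr{C}=\scr{C}_{A,w}'$, taking $\scr{W}$ to be the class of \emph{all} morphisms of $\scr{C}_{A,w}'$. Since the conclusion there is a weak equivalence of marked simplicial sets, we then pass to underlying spaces.

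First I check the hypotheses.

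\begin{itemize}
\item $\scr{C}_{A,w}'$ is locally Kan. Its mapping spaces are unions of connected components of $\Map_{\scr{M}}(B,C)$ with $B,C$ fibrant-cofibrant, and these are Kan complexes.
\item $\scr{W}$ contains all isomorphisms and has the $2$-out-of-$3$ property, trivially, because it is the class of all morphisms. This is legitimate: every vertex of a mapping space of $\scr{C}_{A,w}'$ is a weak equivalence in $\scr{M}$, so nothing is lost.
\item The tensoring condition. The functor $\Delta[1]\otimes-$ must actually land in $\scr{C}_{A,w}'$. On objects this holds because $\scr{C}_A'$ is closed under the cylinder $Z=\Delta[1]\otimes-$ by construction. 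On morphisms, $\Delta[1]\otimes-$ preserves weak equivalences between cofibrant objects: it is left Quillen by SM7, and Ken Brown's lemma applies. Hence it restricts to a simplicial functor on $\scr{C}_{A,w}'$. The required natural isomorphism $h_{B,C}$ is exactly Lemma~\ref{BCzaif}.
\item The canonical map $\Delta[1]\otimes B\to B$ is a weak equivalence. This is the standard fact that $\Delta[1]\otimes B$ is a good cylinder for cofibrant $B$. So the map is a morphism of $\scr{C}_{A,w}'$ and lies in $\scr{W}$.
\end{itemize}

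Proposition~\ref{ordinarynerve} then says that $\varphi:(\scr{N}\scr{C}_{A,w}',\scr{W})\to(\scr{N}^{\mathrm{hc}}\scr{C}_{A,w}',\scr{W})$ is a weak equivalence in the marked (Cartesian over a point) model structure. Every edge is marked on both sides. Moreover, every edge of $\scr{N}^{\mathrm{hc}}\scr{C}_{A,w}'$ is an equivalence by Lemma~\ref{lemma1}, since all morphisms are homotopy equivalences by Whitehead's theorem for fibrant-cofibrant objects.

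The main step is to deduce a weak homotopy equivalence of underlying simplicial sets. For this I would argue as follows. A marked simplicial set in which all edges are marked is fibrant-equivalent to its Kan-localization. More precisely, marked weak equivalences between objects $X^\sharp$ coincide with Kan–Quillen weak equivalences of the underlying simplicial sets. The reason is that $(-)^\sharp:\sSet_{\mathrm{Quillen}}\to\sSet^+$ is a left Quillen functor whose right adjoint sends a fibrant marked object to the Kan complex spanned by its marked edges. So for any Kan complex $K$, mapping into $K^\sharp$ computes $\Map(X,K)$ up to homotopy. The marked equivalence $\varphi$ therefore induces bijections $[\scr{N}^{\mathrm{hc}}\scr{C}_{A,w}',K]\to[\scr{N}\scr{C}_{A,w}',K]$ for all Kan complexes $K$, which makes $\varphi$ a weak homotopy equivalence.

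I expect the main obstacle to be this last translation. One must carefully justify that the marked model structure with everything marked detects Kan–Quillen equivalences (cf.\ Lurie, HTT~3.1.5.7). If that reference is awkward, a fallback is to use that $\scr{N}^{\mathrm{hc}}\scr{C}_{A,w}'$ is a Kan complex (Lemma~\ref{Kancomplex}) and identify mapping spaces into $K^\sharp$ directly with $K^X$.
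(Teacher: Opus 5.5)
Your argument is the paper's argument. The paper's proof of Proposition~\ref{anchor} is the single remark that Proposition~\ref{ordinarynerve} applies to $\scr{C}_{A,w}'$, with the tensoring supplied by Lemma~\ref{BCzaif}. Three of your additions are correct and fill in what the paper leaves unsaid:
\begin{itemize}
\item taking $\scr{W}$ to be all morphisms;
\item checking, via SM7 and Ken Brown's lemma, that $\Delta[1]\otimes-$ restricts to a simplicial endofunctor of $\scr{C}_{A,w}'$;
\item deducing a weak homotopy equivalence from the marked equivalence $(\scr{N}\scr{C}_{A,w}')^\sharp\to(\scr{N}^{\mathrm{hc}}\scr{C}_{A,w}')^\sharp$ by mapping into $K^\sharp$ for Kan complexes $K$. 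The paper never addresses this last step.
\end{itemize}

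The step that fails is your first bullet, the claim that $\scr{C}_{A,w}'$ is locally Kan, and it fails in the paper's argument too. Requiring $\scr{C}_A'$ to be closed under $Z=\Delta[1]\otimes-$ puts $\Delta[1]\otimes A$ into $\scr{C}_A'$, but tensoring with $\Delta[1]$ does not preserve fibrant objects. Take $\scr{M}=\sSet$ with the Kan--Quillen structure and $A=\Delta[0]$. Then $\Delta[1]\otimes\Delta[0]=\Delta[1]$ lies in $\scr{C}_A'$ and is not a Kan complex. Moreover $\Map_{\scr{C}_{A,w}'}(\Delta[0],\Delta[1])=\Delta[1]$, since both vertices are weak equivalences and $\Delta[1]$ is connected, and this mapping space is not Kan.

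So the first hypothesis of Proposition~\ref{ordinarynerve} is not met. Your fallback through Lemma~\ref{Kancomplex}, and your appeal to Lemma~\ref{lemma1} and Whitehead's theorem, need the same fibrancy. The assertion after Definition~\ref{CAcategory}, that all objects are fibrant-cofibrant, cannot hold for this choice of $Z$. Lemma~\ref{BCzaif} relies on the tensor $\Delta[1]\otimes X$ of $\scr{M}$ itself, so simply swapping in a fibrant cylinder does not give you the strict isomorphism it provides.

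To close the gap you need some extra input. One option is to restrict to model categories in which $\Delta[1]\otimes-$ preserves fibrant objects, for instance $\Top$, where every object is fibrant. Another is an argument comparing $\scr{N}\scr{C}_{A,w}$ with $\scr{N}^{\mathrm{hc}}\scr{C}_{A,w}$ that does not pass through a strict $\Delta[1]$-tensoring.
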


Of course, Proposition~\ref{anchor} only shows the case of $Z=\Delta[1]\otimes-$. The next proposition generalize the result for an arbitrary $Z$.
\begin{proposition}
       \label{second}
    For any cylinder object functor $Z$, let $\scr{C}_{A,w}$ be the category described in Section~\ref{section41} associated to $Z$. Then, the natural simplicial map $\varphi:\scr{N}\scr{C}_{A,w}\to \scr{N}^{\mathrm{hc}}\scr{C}_{A,w}$ is a weak homotopy equivalence.
\end{proposition}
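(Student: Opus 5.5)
The plan is to reduce the arbitrary-$Z$ case to the anchor case, Proposition~\ref{anchor}, by comparing $\scr{C}_{A,w}$ with $\scr{C}'_{A,w}$ inside a larger category that contains both. Let $\scr{C}''_A$ be the smallest full subcategory of $\scr{M}$ satisfying the closure conditions of Definition~\ref{CAcategory} with respect to \emph{both} cylinder functors $Z$ and $\Delta[1]\otimes-$. The argument of \cite[Lemma 3.1]{tsopmene2024classificationhomogeneousfunctorsmanifold} still applies to $\scr{C}''_A$: it is small, and all of its objects are fibrant-cofibrant and weakly equivalent to $A$. Let $\scr{C}''_{A,w}$ be its weak-equivalence subcategory. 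Then $\scr{C}_{A,w}$ and $\scr{C}'_{A,w}$ are full simplicial subcategories of $\scr{C}''_{A,w}$; fullness holds because each mapping space is determined by the ambient $\scr{M}$. Since $\scr{C}''_A$ is closed under $\Delta[1]\otimes-$, the proof of Lemma~\ref{BCzaif} goes through verbatim for it. Combined with Proposition~\ref{ordinarynerve}, this shows that $\varphi_{\scr{C}''_{A,w}}$ is a weak homotopy equivalence.

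By naturality of $\varphi$ we get a commutative square whose horizontal maps are $\varphi_{\scr{C}_{A,w}}$ and $\varphi_{\scr{C}''_{A,w}}$ and whose vertical maps are induced by the inclusion $j:\scr{C}_{A,w}\hookrightarrow\scr{C}''_{A,w}$. By 2-out-of-3, it suffices to show that both vertical maps are weak homotopy equivalences.

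For the homotopy coherent nerves this is the argument of Proposition~\ref{first}. Since $j$ is fully faithful, the mapping spaces are preserved. Every object of $\scr{C}''_{A,w}$ is weakly equivalent to $A\in\scr{C}_{A,w}$, and by Whitehead it is homotopy equivalent to $A$, which gives essential surjectivity on $\mathrm{Ho}$. Hence $\scr{N}^{\mathrm{hc}}(j)$ is a categorical equivalence and in particular a weak homotopy equivalence.

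The main obstacle is the ordinary nerve $\scr{N}(j):\scr{N}\scr{C}_{A,w}\to\scr{N}\scr{C}''_{A,w}$, because $j$ is not an equivalence of underlying $1$-categories. I would try Quillen's Theorem A: for each $Y\in\scr{C}''_{A,w}$, show that the comma category $j/Y$ of pairs $(X,\,f:X\to Y)$ with $f$ a weak equivalence is contractible. The plan is to use the closure properties of $\scr{C}_A$ to manufacture cones. Given finitely many objects $f_i:X_i\to Y$ of $j/Y$, the factorization closure ($V_\phi\in\scr{C}_A$ for maps into limits of fibrant diagrams) and the colimit closure ($Q\colim F$ for horn-shaped weak-equivalence diagrams) should produce an object of $\scr{C}_{A,w}$ over $Y$ receiving compatible maps from the $X_i$; $Z$-cylinders then supply the homotopies. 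This would show that $j/Y$ is filtered up to homotopy, hence contractible.

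If Theorem A proves too delicate, I would fall back on an alternative. One can use that every morphism of $\scr{C}_{A,w}$ and of $\scr{C}''_{A,w}$ is a weak equivalence, so that $\scr{N}\scr{C}_{A,w}$ models the localization $\scr{N}\scr{C}_{A,w}[\scr{W}^{-1}]$ (its groupoidification). Then apply Proposition~\ref{ordinarynerve} in its marked form to $\scr{C}''_{A,w}$, and compare the localizations via Dwyer--Kan: $j$ is a fully faithful inclusion of homotopical categories that is essentially surjective up to weak equivalence, so it induces an equivalence of hammock localizations and hence a weak equivalence of nerves.
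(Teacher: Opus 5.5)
There is a genuine gap. Your enlargement $\scr{C}''_A$ is fine, as are the application of Proposition~\ref{ordinarynerve} to $\scr{C}''_{A,w}$ and the Dwyer--Kan comparison of homotopy coherent nerves along $j:\scr{C}_{A,w}\hookrightarrow\scr{C}''_{A,w}$. But the step you flag as the main obstacle, that $\scr{N}(j)$ is a weak equivalence, is the whole content of the proposition, and neither of your routes establishes it. The ordinary nerve of a small category of weak equivalences depends on how rich that category is as a $1$-category. For example, the full subcategory on the single object $A$ has nerve $B$ of the discrete monoid of self weak equivalences of $A$, which is in general not $B\Aut(A)$. So any proof must use the specific closure properties of Definition~\ref{CAcategory}.

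Your fallback does not use them. It relies on the claim that a fully faithful inclusion of categories of weak equivalences, essentially surjective up to weak equivalence, induces an equivalence of hammock localizations, and this is false. Take $D$ with objects $a,b$ and two distinct arrows $u,v:a\to b$, all declared weak equivalences, and let $C\subset D$ be the full subcategory on $a$. The inclusion is fully faithful and $b$ is weakly equivalent to $a$, yet $|\scr{N}C|=*$ while $|\scr{N}D|\simeq S^1$. A Dwyer--Kan equivalence needs equivalences of derived mapping spaces $L^H(-)(x,y)$, and $1$-categorical fullness does not give that.

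The Theorem~A route does not go through as sketched either. Every closure condition in Definition~\ref{CAcategory} builds new objects from diagrams lying inside $\scr{C}_A$: either a map $X\to\lim F$ factored through $V_\phi$, or $Q\colim F$ for an injective-fibrant horn-shaped diagram. None takes an object $Y\in\scr{C}''_A\setminus\scr{C}_A$, or a map into it, as input. So you have no mechanism for producing an object of $\scr{C}_A$ over $Y$ that receives strictly compatible maps from unrelated $X_i\to Y$. Moreover, ``filtered up to homotopy'' does not give contractibility of $\scr{N}(j/Y)$. Cylinders only supply zigzags, whereas you would need strict cocones on finite (or subdivided sphere-shaped) diagrams in $j/Y$.

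The paper avoids a direct comparison. The closure properties enter through the Tsopm\'en\'e--Stanley bijection $\scr{F}_{kA}(\scr{O}(M)^{op},\scr{M})/\mathfrak{w}\cong\Fun(U(\scr{T}^M)^{op},\scr{C}_{A,w})/\mathfrak{w}$, which holds for every choice of $Z$. The paper compares this for $\scr{C}_{A,w}$ and $\scr{C}'_{A,w}$ inside $\scr{N}^{\mathrm{hc}}\scr{M}_{A,w}$, then uses triangulations of spheres with $U(\scr{T}^{S^{n-1}})\cong\partial\tilde\Delta^n$ to detect maps out of spheres. To repair your argument directly, you would have to use the $V_\phi$-closure (with $\phi:X\to\lim F$, $X\in\scr{C}_A$) and the horn-colimit closure to show that $(\partial\tilde\Delta^n)^{op}$-shaped weak-equivalence diagrams in $\scr{C}_A$ are realized and filled exactly as in the larger category. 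That amounts to redoing that part of Tsopm\'en\'e--Stanley.
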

\begin{proof}

For any $D\in \Cat$, given $F, F\in \Fun(D,\scr{C}_{A,w})$, let $\mathfrak{w}$ be the equivalence relation such that $F\mathfrak{w}F$ if and only if there exists a zigzag of objectwise weak equivalences between $F$ and $F'$. 

 The main result of \cite[Section 4]{tsopmene2024classificationhomogeneousfunctorsmanifold}  is a series of results \cite[Proposition 4.8, Proposition 4.15]{tsopmene2024classificationhomogeneousfunctorsmanifold} that determine a bijection.
\[{\scr{F}_{kA}(\scr{O}(M)^{op}, \scr{M})/\frak{w}}\to \Fun(U(\scr{T}^M)^{op},\scr{C}_{A,w})/\mathfrak{w}\]
here $ {\scr{F}_{kA}(\scr{O}(M)^{op}, \scr{M})/\frak{w}}$ is the equivalent classes of a type of homogeneous functors that will be introduced in Section~\ref{section46}, and $U(\scr{T}^M)$ is a category associated to the triangulation $\scr{T}^M$ of some manifold $M$, see \cite[Section 4]{tsopmene2024classificationhomogeneousfunctorsmanifold} for details.

Notice that for any small category $D$, $F\mathfrak{w}F'\in \Fun(D,\scr{C}_{A,w})$ is equivalent to the fact that $\scr{N}F, \scr{N}F'\in \Fun(\scr{N}D,\scr{N}\scr{C}_{A,w})$ live in the same connected component. Therefore, shows that there exists a bijection, 
\begin{align*}
    {\scr{F}_{kA}(\scr{O}(M)^{op}, \scr{M})/\frak{w}}\to \pi_0\Fun(\scr{N}U(\scr{T}^M)^{op},\scr{N}\scr{C}_{A,w})
\end{align*}
Let $\scr{M}_{A,w}$ denote the subcategory of $\scr{M}$ whose objects are fibrant-cofibrant and are homotopy equivalent to $A$, and the mapping space are the connected components of weak equivalences. We have the following diagram
\begin{center}
    \begin{tikzcd}
{\pi_0\Fun(\scr{N}U(\scr{T}^M)^{op},\scr{N}^{\mathrm{hc}}\scr{M}_{A,w})} & {\pi_0\Fun(\scr{N}U(\scr{T}^M)^{op},\scr{N}\scr{C}_{A,w}')} \arrow[l]                                      \\
{\pi_0\Fun(\scr{N}U(\scr{T}^M)^{op},\scr{N}\scr{C}_{A,w})} \arrow[u]        & {\scr{F}_{kA}(\scr{O}(M)^{op}, \scr{M})/\frak{w}} \arrow[l, "\cong"] \arrow[u, "\cong"]
\end{tikzcd}
\end{center} The top and left arrows are induced from the inclusions $\scr{C}_{A,w}$ and $ \scr{C}_{A,w}'\hookrightarrow\scr{M}_{A,w}$. 
The diagram above commutes, as $\scr{}\scr{C}_{A,w}$ and $\scr{}\scr{C}_{A,w}'$ have objects of the same homotopy type. Moreover, the top arrow is an isomorphism by Proposition~\ref{anchor}, so 
\begin{align*}
    {\pi_0\Fun(\scr{N}U(\scr{T}^M)^{op},\scr{N}\scr{C}_{A,w})} \to {\pi_0\Fun(\scr{N}U(\scr{T}^M)^{op},\scr{N}^{\mathrm{hc}}\scr{M}_{A,w})}
\end{align*} is an isomorphism. 

This function is induced by the natural inclusion  $\scr{N}\scr{C}_{A,w}\to \scr{N}^\mathrm{hc}\scr{C}_{A,w}\xrightarrow{\sim} \scr{N}^\mathrm{hc}\scr{M}_{A,w}$, and we factor it as \[\scr{N}\scr{C}_{A,w}\stackrel{\sim}{\rightarrowtail{}}K\twoheadrightarrow \scr{N}^\mathrm{hc}\scr{M}_{A,w}\]

We have that if $f:X\to Y$ is a weak equivalence of simplicial set, then for any simplicial set $Z\in \sSet$, there is a bijection
\[
\pi_0\Fun(Z,X)\to \pi_0\Fun(Z,Y)
\]
Then, the function
\begin{align*}
       {\pi_0\Fun(\scr{N}U(\scr{T}^M)^{op},\scr{N}\scr{C}_{A,w})} \cong \pi_0\Fun(\scr{N}U(\scr{T}^M)^{op},K)\to {\pi_0\Fun(\scr{N}U(\scr{T}^M)^{op},\scr{N}^{\mathrm{hc}}\scr{M}_{A,w})}
\end{align*} is a bijection. 

For any $n\geq 1$, we are able to find a triangulation $\scr{T}^{S^{n-1}}$ of the $n-1$-sphere $S^{n-1}$ such that $U(\scr{T}^{S^{n-1}})\cong \partial\tilde\Delta^{n}$. Since both $K$ and $\scr{N}^{\mathrm{hc}}\scr{M}_{A,w}$ are Kan complexes, there is an isomorphism,
\begin{multline*}
\pi_0\Fun(\scr{N}(\partial\tilde\Delta^{n})^{op},K)\cong [|(\partial\tilde\Delta^{n})^{op}|, |K|]\cong [S^{n-1}, |K|]
\\     \xrightarrow{\cong}{\pi_0\Fun(\scr{N}(\partial\tilde\Delta^{n})^{op},\scr{N}^{\mathrm{hc}}\scr{M}_{A,w})}\cong [S^{n-1}, |\scr{N}^{\mathrm{hc}}\scr{M}_{A,w}|]
\end{multline*}

Therefore, we have a weak equivalence $K\simeq \scr{N}^{\mathrm{hc}}\scr{M}_{A,w}$. Thus, by the two-out-of-three property of weak equivalences, we have that $\scr{N}\scr{C}_{A,w}\simeq \scr{N}^\mathrm{hc}\scr{C}_{A,w}$.
\end{proof}
\subsection{The Weak Homotopy Equivalence \protect\boldmath$\scr{N}\scr{C}_{A,w}\simeq\hat{A}_\bullet'$}
\label{section43}

In this section, we will show that there is a weak homotopy equivalence between $\scr{N}\scr{C}_{A,w}$ and $\hat{A}'_\bullet$. This weak homotopy equivalence can be described as a special case of a more general result regarding any simplicial set. 

In \cite[Chapter III, 4.]{goerss-jardine}, the author defines a functor $\mathrm{Ex}: \sSet\to \sSet$ as follows: given a simplicial set $X\in \sSet$, define 
    \begin{align*}
        \mathrm{Ex}(X)(n):=\Hom_{\sSet}(\scr{N}\tilde\Delta^n, X),
    \end{align*}
    where the category $\tilde{\Delta}^n$ is the same as the one we introduced in Section~\ref{section2}. Since $\tilde\Delta^\bullet$ is a cosimplicial object in the category of small categories, the collection $\{\mathrm{Ex}(X)(n)\}_{n\geq 0}$ determines a simplicial set. Moreover, this determines a functor $\mathrm{Ex}:\sSet\to \sSet$.

    This functor admits a left adjoint that will be denoted $\mathrm{sd}:\sSet\to \sSet$ and will be referred to as the\textit{ barycentric subdivision functor}. For the cosimplicial simplicial set of standard simplicial sets $\Delta[\bullet]$, let the cosimplicial simplicial sets $\mathrm{sd}\Delta[\bullet]$ be obtained as follow
    \begin{align*}
        \mathrm{sd}\Delta[\bullet]:\Delta\xrightarrow{\Delta[\bullet]} \sSet\xrightarrow{\mathrm{sd}}\sSet.
    \end{align*}For any $X\in \sSet$, we have
    \begin{align*}
        \Hom_{\sSet}(\mathrm{sd}\Delta[\bullet],X)\cong \Hom_{\sSet}(\Delta[\bullet],\mathrm{Ex}(X))
\cong\mathrm{Ex}( X)(\bullet)
    \end{align*} where the first isomorphism comes from the adjunction and the second isomorphism comes from Yoneda's Lemma. Therefore, $\mathrm{sd}\Delta[\bullet]\cong \scr{N}\tilde\Delta^\bullet$ (even though the notation are similar, the construction above is irrelevant to the construction in Section~\ref{esd}).
    
  Observe that the category induced by the totally ordered set $[n]$ has nerve $\scr{N}[n]\cong\Delta[n]$. There is also a natural transformation $\tilde\Delta^\bullet\to [\bullet]$ defined by $\{k_0<k_1<...k_m\}\mapsto (k_m)$. This is known to be the last vertex map. Moreover, since $X\cong\Hom_{\sSet}(\scr{N}[\bullet], X)$, this induces a simplicial map $\eta_X:X\to\mathrm{Ex}(X)$.

\begin{proposition}
\label{lastvertex}    \cite[Theorem 4.6]{goerss-jardine} For any $X\in\sSet$, the map $\eta_X:X\to\mathrm{Ex}(X)$ is a weak homotopy equivalence.
\end{proposition}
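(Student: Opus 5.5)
The plan is the classical argument of Kan (as in Goerss–Jardine, Chapter III): first show that $\eta_X$ induces an isomorphism on homology, then on homotopy groups using fibrant replacement. Both $\mathrm{Ex}$ and the identity commute with filtered colimits, and every simplicial set is a filtered colimit of its finite subcomplexes. So I would argue by skeletal induction, starting with representables.

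\textbf{Step 1: simplices.} For $X=\Delta[n]$, the object $\mathrm{Ex}(\Delta[n])$ is weakly contractible. The last vertex map $\tilde\Delta^n\to[n]$ together with the adjunction $\mathrm{sd}\dashv\mathrm{Ex}$ let one write down an explicit contracting homotopy onto the vertex $n$. Concretely, a simplex of $\mathrm{Ex}(\Delta[n])$ is a functor $\tilde\Delta^m\to[n]$, and one extends it by the cone point $n$, using that $[n]$ has a terminal object. Hence $\eta_{\Delta[n]}$ is a weak equivalence.

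\textbf{Step 2: general $X$.} $\mathrm{Ex}$ preserves pushouts along monomorphisms up to weak equivalence. Rather than prove that directly, I would use that $\mathrm{sd}$ preserves colimits and that the natural map $\mathrm{sd}X\to X$ (adjoint to $\eta_X$, given by the last vertex) is a weak equivalence for all $X$. For $X=\Delta[n]$ this holds because $\mathrm{sd}\Delta[n]=\scr{N}\tilde\Delta^n$ is the nerve of a category with terminal object, hence contractible. For general $X$ I would induct on skeleta via pushouts $\partial\Delta[n]\to\Delta[n]$, using the gluing lemma, since $\mathrm{sd}$ preserves monomorphisms. Then, for a Kan complex $K$, the adjunction identifies $[\Delta[n]/\partial\Delta[n],\mathrm{Ex}K]$ with $[\mathrm{sd}(\Delta[n]/\partial\Delta[n]),K]$. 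One also needs $\mathrm{Ex}K$ to be Kan, which follows because $\mathrm{sd}\Lambda^n_k\to\mathrm{sd}\Delta[n]$ is anodyne. From this, $\pi_*\eta_K$ is an isomorphism.

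\textbf{Step 3: reduction to Kan complexes.} For arbitrary $X$, take a Kan replacement $j:X\to K$ and form the commutative square with $\eta_X$, $\eta_K$, $j$ and $\mathrm{Ex}(j)$. $\mathrm{Ex}$ preserves weak equivalences, since it is right adjoint to $\mathrm{sd}$, which preserves cofibrations and trivial cofibrations. Two-out-of-three then finishes the argument.

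The main obstacle is Step 2, namely showing that the last-vertex map $\mathrm{sd}X\to X$ is a weak equivalence and that $\mathrm{sd}$ preserves anodyne maps. If the gluing induction gets messy, I would instead appeal to the homology computation (both $\mathrm{sd}$ maps are chain homotopy equivalences on normalized chains) combined with a fundamental-groupoid check, in the style of Proposition~\ref{fgtsimeq}.
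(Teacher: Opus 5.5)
The paper gives no proof of its own here; it only cites Goerss--Jardine, Theorem III.4.6. Your Steps 1 and 2 are sound in substance. They leave two points implicit. First, $\mathrm{sd}(A\times\Delta[1])$ is a cylinder on $\mathrm{sd}A$, so the adjunction descends to homotopy classes. Second, under that identification $(\eta_K)_*$ becomes $h_A^*$, where $h$ is the last vertex map.

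The genuine gap is Step 3. You claim that $\mathrm{Ex}$ preserves all weak equivalences because its left adjoint preserves cofibrations and trivial cofibrations. That only makes $\mathrm{Ex}$ a right Quillen functor. So it preserves fibrations, trivial fibrations and, by Ken Brown's lemma, weak equivalences between fibrant objects. Your Kan replacement $j\colon X\to K$ has a non-fibrant source, so nothing you have established says that $\mathrm{Ex}(j)$ is a weak equivalence. In fact, ``$\mathrm{Ex}$ preserves weak equivalences'' is usually deduced from the very statement you are proving, by two-out-of-three with the natural weak equivalence $\eta$. As written, the passage from Kan complexes to arbitrary $X$ is circular.

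The missing idea is a homotopy that handles every $X$ at once, with no fibrant replacement. Let $\epsilon_X\colon\mathrm{sd}\,\mathrm{Ex}X\to X$ be the counit, so that $\epsilon_X\circ\mathrm{sd}(\eta_X)=h_X$. Show that $\eta_X\circ\epsilon_X$ and $h_{\mathrm{Ex}X}$ are simplicially homotopic. Under the adjunction they correspond to $\epsilon_X\circ h_{\mathrm{sd}A}$ and $\epsilon_X\circ\mathrm{sd}(h_A)$, with $A=\mathrm{Ex}X$.

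To build the homotopy, work first on $\mathrm{sd}^2\Delta[n]$. This is the nerve of the poset of chains $S_0\subsetneq\cdots\subsetneq S_k$ of nonempty subsets of $[n]$. On it, $\mathrm{sd}(h_{\Delta[n]})$ and $h_{\mathrm{sd}\Delta[n]}$ are the nerves of $c\mapsto\{\max S_0,\dots,\max S_k\}$ and $c\mapsto S_k$. The inclusion $\{\max S_0,\dots,\max S_k\}\subseteq S_k$ is natural in $[n]$. So it glues to a homotopy $\mathrm{sd}^2A\times\Delta[1]\to\mathrm{sd}A$ from $\mathrm{sd}(h_A)$ to $h_{\mathrm{sd}A}$. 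Precompose with $\mathrm{sd}(\mathrm{sd}A\times\Delta[1])\to\mathrm{sd}^2A\times\mathrm{sd}\Delta[1]\to\mathrm{sd}^2A\times\Delta[1]$ and then apply $\epsilon_X$. This transposes it to the desired homotopy $\mathrm{sd}A\times\Delta[1]\to\mathrm{Ex}X$.

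Now work in $\mathrm{Ho}(\sSet)$. There $h$ is invertible, and by naturality $h_{\mathrm{Ex}X}\circ\mathrm{sd}(\eta_X)=\eta_X\circ h_X$. It follows that $\epsilon_X\circ h_{\mathrm{Ex}X}^{-1}$ is a two-sided inverse of $\eta_X$. Hence $\eta_X$ is a weak equivalence for every $X$. With this route you only need your (correct) skeletal argument that each $h_X$ is a weak equivalence. Step 1 and the Kan-complex computation become superfluous, and $\mathrm{Ex}$ preserving weak equivalences comes out as a corollary rather than an input.
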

We wish to use Proposition~\ref{lastvertex} to study the simplicial set $\hat{A}'_\bullet$ introduced in Section~\ref{section2}.
Observe the simplicial set $\hat{A}'_\bullet$ is nothing but the simplicial set  $\Hom_{\sCat}((\tilde\Delta^\bullet)^{op}, \scr{C}_{A,w})$. Notice that the set of contravariant functors in a category is naturally isomorphic to the set of covariant functors in the opposite category, $\hat{A}_\bullet'\cong \Hom_{\sCat}(\tilde\Delta^\bullet, \scr{C}_{A,w}^{op})$. Since the ordinary nerve functor $\scr{N}$ is fully faithful,

    \begin{equation}
    \label{AhatprimeAsEX}
        \hat{A}_\bullet'\cong \Hom_{\sCat}(\tilde\Delta^\bullet, \scr{C}_{A,w}^{op})\cong  \Hom_{\sCat}(\scr{N}\tilde\Delta^\bullet, \scr{N}\scr{C}_{A,w}^{op})\cong\mathrm{Ex}(\scr{N}\scr{C}^{op}_{A,w})
    \end{equation}
 We then have a corollary of Proposition~\ref{lastvertex}.
\begin{corollary}
     \label{ExA}   There is a weak equivalence $\eta_{\scr{N}\scr{C}^{op}_{A,w}}:\scr{N}\scr{C}^{op}_{A,w}\to \mathrm{Ex}(\scr{N}\scr{C}^{op}_{A,w})\cong\hat{A}'$.
\end{corollary}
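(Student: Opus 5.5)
This corollary is a direct specialization of Proposition~\ref{lastvertex}, combined with the identification~(\ref{AhatprimeAsEX}). The plan has three steps.

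First, apply Proposition~\ref{lastvertex} to the simplicial set $X=\scr{N}\scr{C}^{op}_{A,w}$. This gives at once that the last vertex map $\eta_{\scr{N}\scr{C}^{op}_{A,w}}:\scr{N}\scr{C}^{op}_{A,w}\to \mathrm{Ex}(\scr{N}\scr{C}^{op}_{A,w})$ is a weak homotopy equivalence. No hypothesis (Kan or otherwise) on $X$ is needed, since the cited theorem holds for every simplicial set.

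Second, check that the chain of isomorphisms in~(\ref{AhatprimeAsEX}) is an isomorphism of simplicial sets, not merely a levelwise bijection.
\begin{itemize}
\item[(a)] At level $n$, the elements of $\hat{A}'_n$ are functors $(\tilde\Delta^n)^{op}\to\scr{C}_A$ sending every morphism to a weak equivalence. These are exactly the functors of underlying categories $(\tilde\Delta^n)^{op}\to \scr{C}_{A,w}$, i.e.\ functors $\tilde\Delta^n\to\scr{C}^{op}_{A,w}$. Here only the vertex (0-simplex) level of the mapping spaces is relevant, because $\tilde\Delta^n$ is discrete.
\item[(b)] Full faithfulness of $\scr{N}:\Cat\to\sSet$ identifies these functors with simplicial maps $\scr{N}\tilde\Delta^n\to \scr{N}\scr{C}^{op}_{A,w}$, that is, with $\mathrm{Ex}(\scr{N}\scr{C}^{op}_{A,w})(n)$.
\item[(c)] Naturality in $[n]$: the face and degeneracy maps on $\hat{A}'_\bullet$ are precomposition with $\tilde f^{op}$, and those on $\mathrm{Ex}$ are precomposition with $\scr{N}\tilde f$. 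These agree because $\scr{N}$ is a functor and passing to opposites commutes with precomposition.
\end{itemize}

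Third, compose $\eta$ with this isomorphism to obtain the claimed weak equivalence $\scr{N}\scr{C}^{op}_{A,w}\to\hat{A}'_\bullet$.

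The only point I expect to need care is step (a): one must use the convention that $\scr{N}$ of a simplicial category means the nerve of its underlying ordinary category, and that the weak-equivalence components in $\scr{C}_{A,w}$ contain exactly the vertices that are weak equivalences. With that granted, everything else is formal.
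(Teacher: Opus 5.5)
Your proposal is correct and is the paper's argument: apply Proposition~\ref{lastvertex} to $X=\scr{N}\scr{C}^{op}_{A,w}$ and compose with the identification~(\ref{AhatprimeAsEX}). Your additional checks are details the paper leaves implicit: that~(\ref{AhatprimeAsEX}) respects the simplicial operators, and that the vertices of the weak-equivalence components of the mapping spaces of $\scr{C}_A$ are exactly the weak equivalences.
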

\begin{proposition}\label{third}
    There is a weak homotopy equivalence $\scr{N}\scr{C}_{A,w}\simeq \hat{A}_\bullet'$
\end{proposition}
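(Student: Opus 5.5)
The plan is to assemble a zigzag from two results already established: Corollary~\ref{ExA} and Proposition~\ref{esdopposite}. Corollary~\ref{ExA} provides a weak equivalence
\[
\eta_{\scr{N}\scr{C}^{op}_{A,w}}:\scr{N}\scr{C}^{op}_{A,w}\longrightarrow \mathrm{Ex}(\scr{N}\scr{C}^{op}_{A,w})\cong \hat{A}'_\bullet ,
\]
where the isomorphism is (\ref{AhatprimeAsEX}). So the only remaining task is to connect $\scr{N}\scr{C}_{A,w}$ to $\scr{N}\scr{C}^{op}_{A,w}$ by weak equivalences.

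For that I will use the identification $\scr{N}(\scr{C}^{op})\cong (\scr{N}\scr{C})^{op}$ recorded in Section~\ref{esd}. Applied to $\scr{C}=\scr{C}_{A,w}$, viewed as an ordinary category by forgetting the higher simplices of its mapping spaces, it gives $\scr{N}\scr{C}^{op}_{A,w}\cong(\scr{N}\scr{C}_{A,w})^{op}$. Proposition~\ref{esdopposite}, applied to $X=\scr{N}\scr{C}_{A,w}$, then yields a zigzag
\[
\scr{N}\scr{C}_{A,w}\xrightarrow{\ \simeq\ } K \xleftarrow{\ \mathrm{src}_K\ } \mathrm{esd}^*K \xrightarrow{\ \mathrm{tgt}_K\ } K^{op}\xleftarrow{\ \simeq\ } (\scr{N}\scr{C}_{A,w})^{op},
\]
where $K$ is a Kan replacement of $\scr{N}\scr{C}_{A,w}$. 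I would also check that the last map is a weak equivalence. This holds because $(-)^{op}$ preserves weak equivalences: it commutes with geometric realization up to natural homeomorphism, since $|\Delta[n]^{op}|\cong|\Delta[n]|$ compatibly with the cosimplicial structure via the reversal of barycentric coordinates.

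Concatenating the two pieces gives
\[
\scr{N}\scr{C}_{A,w}\simeq(\scr{N}\scr{C}_{A,w})^{op}\cong \scr{N}\scr{C}^{op}_{A,w}\xrightarrow{\eta}\mathrm{Ex}(\scr{N}\scr{C}^{op}_{A,w})\cong\hat{A}'_\bullet ,
\]
a zigzag of weak homotopy equivalences, which is the claim.

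The main point needing care is the bookkeeping in (\ref{AhatprimeAsEX}). $\hat{A}'_\bullet$ consists of functors into $\scr{C}_A$ sending every morphism to a weak equivalence, and I must check this is the same as functors into the underlying ordinary category of $\scr{C}_{A,w}$. It is, because a vertex of $\Map_{\scr{C}_A}$ lies in a weak-equivalence component exactly when it is a weak equivalence. I also need to confirm that the face and degeneracy maps of $\hat{A}'_\bullet$ (precomposition along $(\tilde\Delta^\bullet)^{op}$) match the simplicial structure of $\mathrm{Ex}$ under the nerve. This is a routine naturality check, since $\scr{N}$ is fully faithful and $\mathrm{sd}\Delta[\bullet]\cong\scr{N}\tilde\Delta^\bullet$ as cosimplicial objects.
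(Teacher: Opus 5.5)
Your proposal is correct and is the paper's argument: the paper's proof simply cites Corollary~\ref{ExA} and Proposition~\ref{esdopposite}, and your identification $\scr{N}(\scr{C}_{A,w}^{op})\cong(\scr{N}\scr{C}_{A,w})^{op}$ together with the bookkeeping behind (\ref{AhatprimeAsEX}) makes explicit what that one-line proof leaves implicit. As a side remark, the natural homeomorphism $|X^{op}|\cong|X|$ that you use to justify the last arrow already gives $|\scr{N}\scr{C}_{A,w}|\cong|\scr{N}\scr{C}^{op}_{A,w}|$ on its own, so the detour through $\mathrm{esd}^*$ is not actually needed.
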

\begin{proof}
    This follows from Corollary~\ref{ExA} and Proposition~\ref{esdopposite}.
\end{proof}

\subsection{The Homotopy Equivalence \protect\boldmath$\|\fgt\hat{A}_\bullet\|\to \|\fgt\hat{A}'_\bullet\|$}
\label{section44}
The idea to the proof of the main result is to construct a zigzag of weak equivalences between $\hat{A}_\bullet$ and $B \Aut(A)$, and so far we have constructed such a zigzag between $\hat{A}'_\bullet$ and $B \Aut(A)$. Now we will construct a weak equivalence $\hat{A}'_\bullet\to \hat{A}_\bullet$. Recall that we mentioned in Section~\ref{section2} that there is not a simple degeneracy structure on $\hat A_\bullet$. This makes the study of $\hat A_\bullet$ difficult to take place in $\sSet$. Therefore, we use Proposition~\ref{fgtsimeq} from Section~\ref{section31}, which states that $\|\fgt (X)\|\simeq |X|$, allowing us to consider the semisimplicial set $\fgt \hat A_\bullet$ instead. 

Notice that even though there does not exist a very natural simplicial map between $\hat{A}_\bullet'$ and $\hat{A}_\bullet$, there is a natural semisimplicial map
\[\Phi:\fgt\hat{A}_\bullet\to \fgt\hat{A}'_\bullet\]
induced by the inclusion of $\hat{A}_n$.

In \cite{tsopmene2024classificationhomogeneousfunctorsmanifold}, the authors introduced a functor $R_n:\scr{C}_{A,w}^{(\tilde{\Delta}^{n})^{op}}=\fgt \hat{A}'_n\to \scr{C}_{A,w}^{(\tilde{\Delta}^{n})^{op}}$ such that for any $F\in \scr{C}_{A,w}^{(\tilde{\Delta}^{n})^{op}}$, $R_n F$ is fibrant in the injective model structure of $\scr{M}^{(\tilde{\Delta}^{n})^{op}}$. In other words, $R_nF\in \hat{A}_n$. We define 
\[
    R:\fgt\hat{A}'_\bullet\to \fgt\hat{A}_\bullet, \  R(F):=R_nF
\]
This is a semisimplicial map because for any $d^i:{(\tilde{\Delta}^{n-1})^{op}}\to {(\tilde{\Delta}^{n})^{op}}$, $R_{n-1}(F\circ d^i)=(R_nF)\circ d^i$.

Since for any $F\in\hat{A}_n$, $R_nF=F$, then $R\circ \Phi=id_{\fgt \hat A_\bullet}$, i.e. $\fgt\hat{A}$ is a retract of $\fgt\hat{A}'$.

We wish to show that $R$ induces a weak homotopy equivalence between fat realizations. Our approach is to show that for every $n$-simplex of $\fgt \hat{A}'_\bullet$, there exists a cylinder $\Delta^n\times \Delta^1$ embedded in $\|\fgt \hat{A}'_\bullet\|$ connecting the $n$-simplex to the subspace $\|\fgt \hat{A}_\bullet\|$, such that the collection of cylinders determines a weak equivalence between $\|\fgt\hat{A}'_\bullet\|$ and $\|\fgt\hat{A}_\bullet\|$. 

In $\sSet$, a map $\Delta[n]\times \Delta[1]\to X$ determines a map $\Delta^n\times \Delta^1\to |X|$. However, for semisimplicial sets, an $n$-simplex is determined by a semisimplicial map $\mathrm{Core}(\Delta[n])\to X$ due to Yoneda's Lemma and the fact that $\mathrm{Core}(\Delta[n])\cong \Hom_{\Delta_\mathrm{inj}}(-, [n])$. However, $\mathrm{Core}(\Delta[n])\times\mathrm{Core}(\Delta[1])$ has no simplex of dimension $\geq 2$. Therefore, simply considering $\mathrm{Core}(\Delta[n])\times\mathrm{Core}(\Delta[1])$ is insufficient. 

We avoid the above issue by noticing that $\Delta[n]\times \Delta[1]$ is n.d.c. Therefore, $\free \mathrm{Core}(\Delta[n]\times\Delta[1])\cong \Delta[n]\times\Delta[1]$. Moreover, both of our semisimplicial sets come from simplicial sets, and we have the following property from Lemma~\ref{phiX}
    \begin{equation}
    \label{cylinderX}
       \Hom_{\semisSet}(\mathrm{Core}(\Delta[n]\times\Delta[1]), \fgt X)\cong \Hom_{\sSet} (\Delta[n]\times\Delta[1], X)
\end{equation}
Therefore, to construct a cylinder in $\|\fgt X\|$, we may consider the semisimplicial maps \[\mathrm{Core}(\Delta[n]\times\Delta[1])\to \fgt X\]
\begin{lemma}
\label{coreandsd}   There is an isomorphism\[
          \Hom_{\semisSet}(\mathrm{Core}(\Delta[n]\times\Delta[1]),\fgt\hat{A}'_\bullet)\cong  \Hom_{\sSet}(\mathrm{sd}(\Delta[n]\times\Delta[1]),\scr{NC}_{A,w}^{op})
   \] 
  Moreover, given a semisimplicial map $H:\mathrm{Core}(\Delta[n]\times\Delta[1])\to \fgt\hat{A}_\bullet'$, denote $\tilde H:\mathrm{sd}(\Delta[n]\times \Delta[1])\to \scr{N}\scr{C}_{A,w}^{op}$ its corresponding simplicial map. For $i=0,1$, $H([n], i)=\tilde{H}|_{\mathrm{sd}(\Delta[n]\times i)}$. Here, $([n],i) \in \mathrm{Core}(\Delta[n]\times\Delta[1])_{n}$ is the $n$-simplex at the vertex $i$ of $ \Delta[1]$.
\end{lemma}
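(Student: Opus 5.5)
The isomorphism will come from composing three identifications already available in the paper. First, since $\Delta[n]\times\Delta[1]$ is n.d.c., Equation~(\ref{cylinderX}) (a consequence of Lemma~\ref{phiX} and Proposition~\ref{adj}) gives
\[
\Hom_{\semisSet}(\mathrm{Core}(\Delta[n]\times\Delta[1]),\fgt\hat{A}'_\bullet)\cong \Hom_{\sSet}(\Delta[n]\times\Delta[1],\hat{A}'_\bullet).
\]
Second, Equation~(\ref{AhatprimeAsEX}) identifies $\hat{A}'_\bullet\cong\mathrm{Ex}(\scr{N}\scr{C}^{op}_{A,w})$. Third, the adjunction $\mathrm{sd}\dashv\mathrm{Ex}$ gives
\[
\Hom_{\sSet}(\Delta[n]\times\Delta[1],\mathrm{Ex}(\scr{N}\scr{C}^{op}_{A,w}))\cong\Hom_{\sSet}(\mathrm{sd}(\Delta[n]\times\Delta[1]),\scr{N}\scr{C}^{op}_{A,w}).
\]
Composing these yields the first claim. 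All three bijections are natural in the source, which is what the second claim needs.

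For the second statement, I would use naturality in the source with respect to the inclusions $j_i:\Delta[n]\cong\Delta[n]\times\{i\}\hookrightarrow\Delta[n]\times\Delta[1]$ for $i=0,1$. These are n.d.c.\ maps, so they induce semisimplicial maps $\mathrm{Core}(j_i):\mathrm{Core}(\Delta[n])\to\mathrm{Core}(\Delta[n]\times\Delta[1])$, and $\mathrm{Core}(j_i)$ picks out exactly the $n$-simplex $([n],i)$. Restricting $H$ along $\mathrm{Core}(j_i)$ gives the $n$-simplex $H([n],i)$ of $\fgt\hat{A}'_\bullet$, by Yoneda for $\mathrm{Core}(\Delta[n])\cong\Hom_{\Delta_{\mathrm{inj}}}(-,[n])$. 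On the other side, restricting $\tilde H$ along $\mathrm{sd}(j_i)$ gives a map $\mathrm{sd}\Delta[n]\cong\scr{N}\tilde\Delta^n\to\scr{N}\scr{C}^{op}_{A,w}$. Under Equation~(\ref{AhatprimeAsEX}) and full faithfulness of $\scr{N}$, this map is precisely a functor $(\tilde\Delta^n)^{op}\to\scr{C}_{A,w}$, that is, an element of $\hat{A}'_n$. Commutativity of the resulting naturality square then gives $H([n],i)=\tilde H|_{\mathrm{sd}(\Delta[n]\times i)}$.

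The main obstacle is verifying that the bijection of Lemma~\ref{phiX} is natural with respect to n.d.c.\ maps such as $j_i$. That is, I must check that $\varphi_X:\free\,\mathrm{Core}(X)\to X$ commutes with $\free\,\mathrm{Core}(f)$ and $f$. I would do this directly from the formula $\varphi_X(\sigma,\mu)=X(\mu)\sigma$: for an n.d.c.\ map $f$, we have $f(X(\mu)\sigma)=Y(\mu)f(\sigma)$, and $f(\sigma)$ lies in $\mathrm{Core}(Y)$. Naturality of the adjunction isomorphisms for $\free\dashv\fgt$ and $\mathrm{sd}\dashv\mathrm{Ex}$ is standard.
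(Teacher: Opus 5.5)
Your proposal is correct and follows the paper's proof: you use the same chain of isomorphisms (Equation~(\ref{cylinderX}), Equation~(\ref{AhatprimeAsEX}), and the adjunction $\mathrm{sd}\dashv\mathrm{Ex}$), and the same restriction square for the faces $([n],i)$. Your explicit check that $\varphi_X(\sigma,\mu)=X(\mu)\sigma$ is natural with respect to n.d.c.\ maps such as $j_i$ supplies the commutativity that the paper dismisses as ``not hard to see''.
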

\begin{proof}
    There is a natural isomorphism
    \begin{align*}
        \Hom_{\semisSet}(\mathrm{Core}(\Delta[n]\times\Delta[1]),\fgt\hat{A}'_\bullet)\cong      \Hom_{\sSet}(\Delta[n]\times\Delta[1],\hat{A}_\bullet')
        \\ \cong \Hom_{\sSet}(\Delta[n]\times\Delta[1],\mathrm{Ex}(\scr{NC}_{A,w}^{op}))
        \\ \cong  \Hom_{\sSet}(\mathrm{sd}(\Delta[n]\times\Delta[1]),\scr{NC}_{A,w}^{op})
    \end{align*}
   The first isomorphism comes from Equation~\ref{cylinderX}, the second one comes from Equation~\ref{AhatprimeAsEX}, and the last one comes from the adjunction $\mathrm{sd}\dashv \mathrm{Ex}$. 

   We can use the same argument to show that \[ \Hom_{\semisSet}(\mathrm{Core}(\Delta[n]),\fgt\hat{A}'_\bullet)\cong  \Hom_{\sSet}(\mathrm{sd}(\Delta[n]),\scr{NC}_{A,w}^{op})\] Observe the following diagram. 
   \begin{center}
       \begin{tikzcd}
{\Hom_{\semisSet}(\mathrm{Core}(\Delta[n]\times\Delta[1]),\fgt\hat{A}'_\bullet)} \arrow[d, "{H\mapsto H|_{([n],i)}}"'] \arrow[rr, "H\mapsto \tilde{H}"] &  & {\Hom_{\sSet}(\mathrm{sd}(\Delta[n]\times\Delta[1]),\scr{NC}_{A,w}^{op})} \arrow[d, "{\tilde{H}\mapsto \tilde{H}|_{([n],i)}}"] \\
{\Hom_{\semisSet}( \mathrm{Core}(\Delta [n]),\fgt\hat{A}'_\bullet)} \arrow[rr, "F\mapsto\tilde{F}"]                                                     &  & {\Hom_{\sSet}(\mathrm{sd}(\Delta[n]),\scr{NC}_{A,w}^{op})}                                                                    
\end{tikzcd}
   \end{center}where $\tilde{F}$ is the simplicial map $\mathrm{sd}(\Delta[n])\to\scr{NC}_{A,w}^{op}$ corresponding to $F:\mathrm{Core}(\Delta [n])\to \fgt\hat{A}'_\bullet$.
   
It is not hard to see that this diagram commutes and shows our second claim.
\end{proof}
The construction of the cylinders would rely on the following property of $R$. For any $F\in \hat{A}'_\bullet$, there is a natural weak equivalence \[
\eta_F: F\to R(F)\]
such that $\eta_F|_{d^i(\tilde\Delta^{n-1,op})}=\eta_{F\circ d^i}$ (see \cite[3.2]{tsopmene2024classificationhomogeneousfunctorsmanifold} for details). Equivalently, there is a simplicial map $\eta_F:\scr{N}(\tilde\Delta^{n})^{op}\times \Delta[1]\to \scr{N}\scr{C}_{A,w}$.

\begin{lemma}
   \label{cylinderA} For any $F \in \fgt\hat{A}_n'$, there is a map $\mathsf{Cyl}(F):\mathrm{Core}(\Delta[n]\times\Delta[1])\to \fgt\hat{A}'_\bullet$ such that $\mathsf{Cyl}(F)([n], 0)=F$ and $\mathsf{Cyl}(F)([n],1)=R(F)\in \Phi(\fgt\hat{A}_n)$.
\end{lemma}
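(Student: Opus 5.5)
By Lemma~\ref{coreandsd}, constructing $\mathsf{Cyl}(F)$ amounts to constructing a simplicial map
\[
\tilde H:\mathrm{sd}(\Delta[n]\times\Delta[1])\to \scr{N}\scr{C}_{A,w}^{op}.
\]
Its restriction to $\mathrm{sd}(\Delta[n]\times\{0\})$ should be the map $\tilde F$ corresponding to $F$, and its restriction to $\mathrm{sd}(\Delta[n]\times\{1\})$ should be the map corresponding to $R(F)$. So the plan is to work on the simplicial side, build $\tilde H$ there, and transport it back through the isomorphism of Lemma~\ref{coreandsd}.

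Since $\Delta[n]\times\Delta[1]$ is the nerve of the poset $[n]\times[1]$, $\mathrm{sd}(\Delta[n]\times\Delta[1])$ is the nerve of the poset $P$ of nondegenerate simplices of $\Delta[n]\times\Delta[1]$. These are chains $\sigma=\{(a_0,e_0)<\dots<(a_m,e_m)\}$ in $[n]\times[1]$, ordered by the face relation. A simplicial map $\mathrm{sd}(\Delta[n]\times\Delta[1])\to\scr{N}\scr{C}_{A,w}^{op}$ is therefore a functor $P^{op}\to \scr{C}_{A,w}$.

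I would define this functor via the projection. Consider the order-preserving map $\pi:P\to \tilde\Delta^n\times[1]$ given by
\[
\sigma\mapsto\bigl(\{a_0,\dots,a_m\},\ \max_j e_j\bigr).
\]
The set $\{a_0,\dots,a_m\}$ is nonempty, and $\pi$ is monotone because a face has a smaller vertex set and a smaller or equal maximum. The natural weak equivalence $\eta_F:F\to R(F)$ is a functor $(\tilde\Delta^{n})^{op}\times[1]\to\scr{C}_{A,w}$ (with the $[1]$-direction arranged appropriately). I set $\tilde H:=\eta_F\circ\pi^{op}$. All values lie in $\scr{C}_{A,w}$, since $F$ and $R(F)$ are weak-equivalence diagrams in $\scr{C}_A$ and the components of $\eta_F$ are weak equivalences.

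On $\Delta[n]\times\{i\}$ the chains have constant second coordinate $i$, and $\pi$ restricts to the identification of the poset of faces of $\Delta[n]$ with $\tilde\Delta^n$. So $\tilde H$ restricts to $F$ for $i=0$ and to $R(F)$ for $i=1$. By the second part of Lemma~\ref{coreandsd}, this gives $\mathsf{Cyl}(F)([n],0)=F$ and $\mathsf{Cyl}(F)([n],1)=R(F)$. The latter lies in $\Phi(\fgt\hat A_n)$ because $R_nF$ is injectively fibrant.

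The main obstacle is the variance bookkeeping. The contravariance of the diagrams (the ${}^{op}$'s in $\hat A'\cong\mathrm{Ex}(\scr{N}\scr{C}^{op}_{A,w})$) must match the direction of $\eta_F$ along the $[1]$-coordinate. If $\eta_F$ points the wrong way for the $\max$-projection, I would use $\min_j e_j$ instead, or relabel the ends of $\Delta[1]$. Either way one has to check that the induced functor really restricts to $F$ at vertex $0$ and to $R(F)$ at vertex $1$, and not the reverse. The compatibility $\eta_F|_{d^i}=\eta_{F\circ d^i}$ is not needed for this lemma; I expect it to be used later, when the cylinders are glued together.
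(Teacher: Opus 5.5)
Your construction is the paper's. There, $\tilde{\mathsf{Cyl}}(F)$ is defined on $\tilde\Delta^n\times\tilde\Delta^1$ as the identity of $F$ on the leg $\{0\}\subset\{0,1\}$ and as $\eta_F$ on the leg $\{1\}\subset\{0,1\}$, then precomposed with the canonical map $\mathrm{sd}(\Delta[n]\times\Delta[1])\to\mathrm{sd}\Delta[n]\times\mathrm{sd}\Delta[1]$, $\sigma\mapsto(\mathrm{pr}_1\sigma,\mathrm{pr}_2\sigma)$; this is exactly your $\pi$ with the first-vertex projection $\min_j e_j$ in place of $\max_j e_j$.

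The variance issue you flag is real, and $\min$ is the right fix. With $\max$, the edge $\{(a,0)<(a,1)\}$ goes to $1$, so putting $F$ at vertex $0$ would require a map $R(F)\to F$. Relabeling the ends is not an option either, since $\Delta[1]$ has no automorphism swapping its vertices.
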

\begin{proof}
   Let $F\in \fgt{A}_\bullet'$. The natural transformation $\eta_F:F\to R(F)$ determines a simplicial map $\eta_F:\tilde{\Delta}^n\times \Delta[1]^{op}\to \scr{NC}_{A,w}^{op}$. Since $\tilde{\Delta}^1\cong \scr{N}(0\to 01\xleftarrow{}1)$, define
    \begin{align*}
   \tilde {\mathsf{Cyl}}(F):\tilde{\Delta}^n\times \tilde\Delta^1\cong \mathrm{sd}\Delta[n]\times\mathrm{sd}\Delta[1] \to \scr{NC}_{A,w}^{op}
    \end{align*}such that $\tilde {\mathsf{Cyl}}(F)|_{\tilde{\Delta}^n\times \Delta[1]^{op}}=\eta_F$ and $\tilde {\mathsf{Cyl}}(F)|_{\tilde{\Delta}^n\times \Delta[1]}=id_F$. 
    Note that there is a canonical simplicial map 
\[\mathrm{sd}(\Delta[n]\times\Delta[1])\to \mathrm{sd}\Delta[n]\times\mathrm{sd}\Delta[1]
\]Therefore, a map $\tilde{\Delta}^n\times \tilde{\Delta}^1\to \scr{NC}_{A,w}^{op}$ determines a map map $\mathrm{sd}(\Delta[n]\times\Delta[1])\to \scr{NC}_{A,w}^{op}$ from precomposition. By the first part of Lemma~\ref{coreandsd}, there is a corresponding semisimplicial map $\mathrm{Core}(\Delta[n]\times\Delta[1])\to \fgt\hat{A}'_\bullet$.
    
Therefore, the simplicial map $ \tilde {\mathsf{Cyl}}(F)$ determines a map $\mathsf{Cyl}(F):\mathrm{Core}(\Delta[n]\times\Delta[1])\to \fgt\hat{A}'_\bullet$, and the second part of Lemma~\ref{coreandsd} shows that $\mathsf{Cyl}(F)$ satisfies our requirement.
\end{proof}

Let\[d^i_{\mathsf{Cyl}}:=d^i\times id_{\Delta[1]}:\Delta[n-1]\times \Delta[1]\to \Delta[n]\times \Delta[1]\] be the simplicial map induced from the inclusion of the $i$th face, $d^i:\Delta[n-1]\to \Delta[n]$. Because $\eta_F|_{d^i(\tilde \Delta[n-1])}=\eta_{F\circ d^i}$,  we have for any $F\in \fgt{A}_\bullet'$, 
    \begin{equation}
    \label{CylrespFaces}
        \mathsf{Cyl}(F)\circ \mathrm{Core}(d^i_\mathsf{Cyl})=\mathsf{Cyl}(F\circ d^i)
    \end{equation}
\begin{lemma}
\label{surjective}For any $\gamma:S^n\to \|\fgt\hat{A}'_\bullet\|$, $\gamma$ is homotopic to a $\tilde\gamma:S^n\to \|\fgt\hat{A}'_\bullet\|$ that factors through $\|\Phi\|$.
\end{lemma}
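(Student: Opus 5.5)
By cellular approximation, I will first homotope $\gamma$ to a map landing in a finite subcomplex of $\|\fgt\hat{A}'_\bullet\|$. The fat realization is a CW complex whose cells are the pairs $(F,\mathring{\Delta}^n)$ with $F\in\hat{A}'_n$, and a compact image meets only finitely many cells. So $\gamma$ factors through $\|Y\|$, where $Y\subseteq\fgt\hat{A}'_\bullet$ is the finite semisimplicial subset generated by finitely many simplices $F_1,\dots,F_r$. The plan is to build a homotopy $\Gamma:\|Y\|\times\Delta^1\to\|\fgt\hat{A}'_\bullet\|$ from the inclusion to a map landing in $\|\Phi\|(\|\fgt\hat{A}_\bullet\|)$. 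Then $\tilde\gamma:=\Gamma(-,1)\circ\gamma$ is the desired map, and it factors through $\|\Phi\|$ via $R$, with $R\circ\Phi=id$.

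To build $\Gamma$, I glue the cylinders of Lemma~\ref{cylinderA}. Each $\mathsf{Cyl}(F)$ is a semisimplicial map $\mathrm{Core}(\Delta[n]\times\Delta[1])\to\fgt\hat{A}'_\bullet$, so it induces a map on fat realizations
\[
\|\mathsf{Cyl}(F)\|:\|\mathrm{Core}(\Delta[n]\times\Delta[1])\|\cong|\Delta[n]\times\Delta[1]|\cong\Delta^n\times\Delta^1\longrightarrow\|\fgt\hat{A}'_\bullet\|.
\]
The homeomorphism comes from Proposition~\ref{homeo} together with Lemma~\ref{phiX}, since $\Delta[n]\times\Delta[1]$ is n.d.c. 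By Lemma~\ref{cylinderA}, this map restricts on $\Delta^n\times\{0\}$ to the characteristic map of $F$ and on $\Delta^n\times\{1\}$ to the characteristic map of $R(F)=\Phi R(F)$. By Equation~(\ref{CylrespFaces}), it restricts on $d^i\Delta^{n-1}\times\Delta^1$ to $\|\mathsf{Cyl}(F\circ d^i)\|$. These maps are therefore compatible with the face identifications defining $\|Y\|=\coprod Y(n)\times\Delta^n/\sim_{\mathrm{semi}}$, and they assemble to a well-defined continuous
\[
\Gamma:\|Y\|\times\Delta^1\to\|\fgt\hat{A}'_\bullet\|,\qquad \Gamma([F,x]_{\mathrm{semi}},t)=\|\mathsf{Cyl}(F)\|(x,t).
\]
Continuity needs that $\|Y\|\times\Delta^1$ carries the quotient topology of $\coprod Y(n)\times\Delta^n\times\Delta^1$. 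This holds because $\Delta^1$ is compact, and it is immediate anyway since $Y$ is finite. We then have $\Gamma(-,0)=$ inclusion, and $\Gamma(-,1)$ lands in the image of $\|\Phi\|$, as $\Gamma([F,x],1)=[R(F),x]_{\mathrm{semi}}=\|\Phi\|[R(F),x]_{\mathrm{semi}}$. Finiteness of $Y$ is not actually needed for this; one could take $Y$ to be everything.

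The main obstacle is checking that the two ends of $\|\mathsf{Cyl}(F)\|$ really are the characteristic maps of $F$ and $R(F)$ under the identification $\|\mathrm{Core}(\Delta[n]\times\Delta[1])\|\cong\Delta^n\times\Delta^1$. Lemma~\ref{coreandsd} only identifies the simplices $([n],i)$, and I will need that the embedding $\Delta^n\times\{i\}\hookrightarrow\Delta^n\times\Delta^1$ corresponds to the core simplex $([n],i)$. I would verify this by noting that $([n],i)$ is a nondegenerate $n$-simplex of $\Delta[n]\times\Delta[1]$ whose realization is exactly the face $\Delta^n\times\{i\}$. The face compatibility in Equation~(\ref{CylrespFaces}) similarly relies on naturality of $\eta_F$ under $d^i$, which the paper asserts from \cite{tsopmene2024classificationhomogeneousfunctorsmanifold}.
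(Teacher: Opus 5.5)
Your argument is correct, but it takes a different and cleaner route than the paper.

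\textbf{The two routes.} The paper keeps the given cellular $\gamma$ and deforms it locally, skeleton by skeleton. It pushes $\gamma$ off the interior of each top cell $F$ through the prism $\mathsf{Cyl}(F)$ onto $R(F)\cup\bigcup_i\mathsf{Cyl}(F\circ d^i)$, then repeats with the $(n-1)$-dimensional prisms, and so on down to the vertices. You instead glue all the prisms at once into one homotopy $\Gamma\colon\|\fgt\hat{A}'_\bullet\|\times\Delta^1\to\|\fgt\hat{A}'_\bullet\|$ from the identity to $\|\Phi\|\circ\|R\|$. In effect you realize the natural weak equivalence $\eta_F\colon F\to R(F)$ as a homotopy of the whole space.

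\textbf{The steps you flag.} Well-definedness needs only Equation~(\ref{CylrespFaces}), iterated over composites of cofaces. The endpoint check you single out is immediate. The identification $\|\mathrm{Core}(X)\|\cong|X|$ is $[\sigma,x]_{\mathrm{semi}}\mapsto[\sigma,x]$, the restriction of $\iota_X$, and it is compatible with $d^i\times\mathrm{id}_{\Delta[1]}$. Hence the core simplex $([n],i)$ realizes to $\Delta^n\times\{i\}$.

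\textbf{What your version buys.} First, you never need the maps $\|\mathsf{Cyl}(F)\|$ to be embeddings with pairwise disjoint interiors. The paper's local pushing tacitly uses this, and it is not available in general: for fibrant $F$ one has $R(F)=F$, so the two ends of the prism are identified. Second, because you only evaluate at $t=1$, you avoid a problem in the paper's final step. There the vertex cylinders are edges $G\xleftarrow{\mathrm{id}}G\xrightarrow{\eta_G}R(G)$, and these need not lie in $\hat{A}_1$, since $G\to G\times R(G)$ need not be a fibration. Third, together with $R\circ\Phi=\mathrm{id}$, your $\Gamma$ shows that $\|\Phi\|$ is a homotopy equivalence with homotopy inverse $\|R\|$. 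This gives Proposition~\ref{fourth} directly, without the detour through $[S^n,-]$ and its basepoint issues. As you note, the initial reduction to a finite subcomplex is unnecessary.
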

\begin{proof}
    Any continuous map between CW complexes is homotopic to a CW map. Therefore, it suffices to consider only the CW cases. We have for any CW map $\gamma:S^n\to \|\fgt\hat{A}'_\bullet\|$, $S^n$ factors through the $n$-skeleton $\|\fgt\hat{A}'_\bullet\|_n$, which is the fat realization of the $k$-simplices of $\fgt\hat{A}'_\bullet$ for $k\leq n$.

    The idea of the proof is to use the result from Lemma~\ref{cylinderA}, which allows us to construct a homotopy that passes a map whose image lies in the bigger space $\|\fgt\hat{A}'_\bullet\|$ to the subspace $\|\fgt\hat{A}_\bullet\|$ (see Fig.~\ref{fig:ProofIdea}).

\begin{figure}
        \centering
        \includegraphics[width=0.5\linewidth]{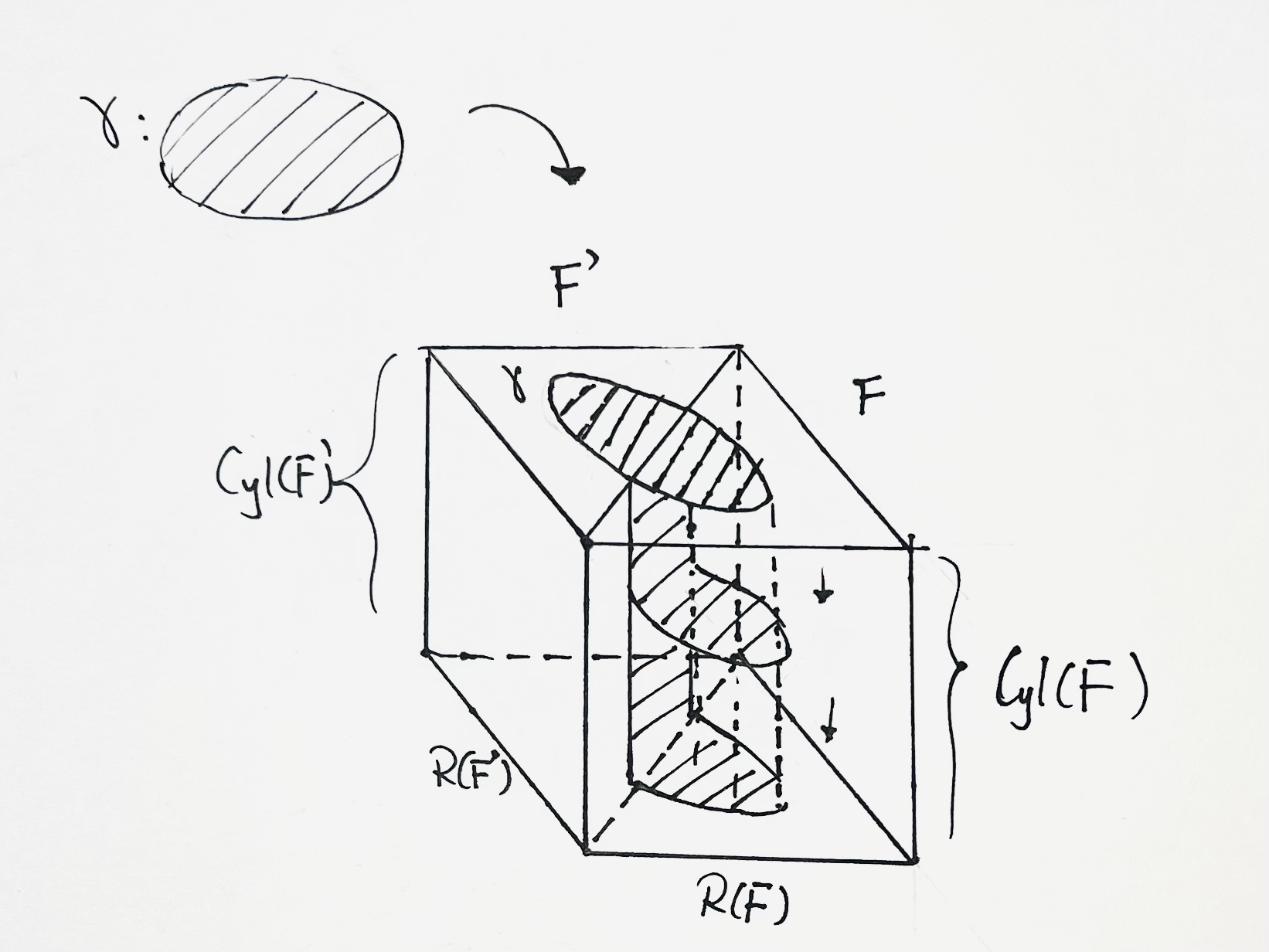}
        \caption{A Figure Illustrating the idea of the proof of Lemma~\ref{surjective}. Let $F$ and $F'\in \fgt\hat{A}'_2$ sharing a face, and let $\gamma$ be a map factoring through $F$ and $F'$. We see that the realization of $\mathsf{Cyl}(F)$ allows us push $\gamma$ onto $R(F)\in \fgt\hat{A}_2$}
        \label{fig:ProofIdea}
    \end{figure}
    We now formally construct the proof.
        
    For any closed set of $\|\fgt\hat{A}'_\bullet\|_n$ corresponding to a $n$-simplex $F$, we will abuse notation to denote it as $F\subseteq \|\fgt\hat{A}'_\bullet\|_n$ as well. For any such $F\subseteq \|\fgt{A}'_\bullet\|_n$, there is a cylinder $\mathsf{Cyl}(F)$ in $\|\fgt\hat{A}'_\bullet\|_{n+1}$. This determines a homotopy of $\gamma$ to a map $\gamma'$, where $\gamma'|_{S^n\ \backslash \ \gamma^{-1}(F)}=\gamma|_{S^n\ \backslash \ \gamma^{-1}(F)}$ and $\gamma'|_{\gamma^{-1}(F)}$ live in the realization of $\mathsf{Cyl}(F)\ \backslash \ \mathrm{int}\mathsf{Cyl}(F)\cup\mathrm{int}(F)$. Here $\mathrm{int} U$ denotes the interior of the set $U\subseteq X$ for some space $X$.
    
    For $F\neq F' \in \fgt\hat{A}_n'$, $\Big[\mathrm{int}\mathsf{Cyl}(F)\cup\mathrm{int}(F)\Big]\cap \Big[\mathrm{int}\mathsf{Cyl}(F')\cup\mathrm{int}(F')\Big]=\emptyset$. Therefore, we have shown that $\gamma$ is homotopic to a map $\gamma'_{n-1}$ such that  
    \[
 \gamma'_{n-1}:S^n\to   \bigcup_{F\in \fgt\hat{A}'_n}\{\mathsf{Cyl}(F)\ \backslash \ \mathrm{int}\mathsf{Cyl}(F)\cup\mathrm{int}(F)\}\cup \|\fgt\hat{A}_\bullet\|_{n}\hookrightarrow\|\fgt{A}'_\bullet\|_n\]
 
       By Equation~\ref{CylrespFaces}, the construction of $\mathsf{Cyl}(F)$ respects the semisimplicial structure of $\fgt{\hat A}'_\bullet$, we have 
       \[
       \mathsf{Cyl}(F)\ \backslash \ \mathrm{int}\mathsf{Cyl}(F)\cup\mathrm{int}(F)=\bigcup_{0\leq i\leq n}     \mathsf{Cyl}(F\circ d^i)
       \]and therefore,
       \begin{align*}
              \bigcup_{F\in \fgt\hat{A}'_n}\{\mathsf{Cyl}(F)\ \backslash \ \mathrm{int}\mathsf{Cyl}(F)\cup\mathrm{int}(F)\}\cup \|\fgt\hat{A}_\bullet\|_{n}=  \bigcup_{F\in \fgt\hat{A}'_n, 0\leq i\leq n}\{\mathsf{Cyl}(F\circ d^i )\}\cup \|\fgt\hat{A}_\bullet\|_{n}
              \\ \subseteq \bigcup_{G\in \fgt\hat{A}'_{n-1}}\{\mathsf{Cyl}(G )\}\cup \|\fgt\hat{A}_\bullet\|_{n}
       \end{align*}
    We may use the same argument as above to construct a homotopy between $\gamma_{n-1}'$ and a map
       \begin{align*}
        \gamma'_{n-2}:S^n\to    \bigcup_{G\in \fgt\hat{A}'_{n-2}, 0\leq i\leq n}\{\mathsf{Cyl}(G) )\}\cup \|\fgt\hat{A}_\bullet\|_{n}\subseteq \|\fgt\hat{A}'_\bullet\|_n
       \end{align*}Since $n$ is finite, this process would stop eventually until we reach $\gamma'_0$ that factors through \begin{align*}
           \bigcup_{G\in \fgt\hat{A}'_{0}}\{\mathsf{Cyl}(G )\}\cup \|\fgt\hat{A}_\bullet\|_{n}
       \end{align*} But $\fgt\hat{A}'_{0}= \fgt\hat{A}_{0}$, so the above space is precisely $\|\fgt{A}_\bullet\|$ and our proof is complete.
\end{proof}
       
We have shown that $\Phi_*:[S^n, \|\fgt\hat{A}_\bullet\|]\to[S^n, \|\fgt\hat{A}'_\bullet\|]$ is surjective. Recall that $R\circ \Phi=id_{\fgt \hat A}$, $\Phi_*$ is also injective. Thus, there is the following
\begin{proposition}
\label{fourth}
    The semisimplicial map $\Phi:\fgt\hat{A}_\bullet\to \fgt\hat{A}'_\bullet$ induces a weak homotopy equivalence between fat realizations.
\end{proposition}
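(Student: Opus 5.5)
The plan is to show that $\|\Phi\|:\|\fgt\hat{A}_\bullet\|\to\|\fgt\hat{A}'_\bullet\|$ induces a bijection on $\pi_n$ for every $n\geq 0$ and every basepoint. We then upgrade this to the stated weak homotopy equivalence; since both fat realizations are CW complexes, Whitehead's theorem even gives a homotopy equivalence. We work with free homotopy classes $[S^n,-]$, as Lemma~\ref{surjective} does. To pass to based homotopy groups, we note that $\fgt\hat{A}'_0=\fgt\hat{A}_0$, so every path component of $\|\fgt\hat{A}'_\bullet\|$ contains a vertex lying in $\|\fgt\hat{A}_\bullet\|$. We may therefore take basepoints in the image of $\|\Phi\|$.

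\textbf{Surjectivity.} This is essentially Lemma~\ref{surjective}: any $\gamma:S^n\to\|\fgt\hat{A}'_\bullet\|$ is homotopic to a map factoring through $\|\Phi\|$. For the based version, we observe that the cylinders $\mathsf{Cyl}(G)$ on $0$-simplices $G$ are constant. Indeed, $R_0G=G$ and $\eta_G$ can be taken to be the identity because $G$ is already fibrant. Hence the deformation fixes vertices, and in particular a chosen vertex basepoint. We take $n=0$ to give surjectivity on $\pi_0$.

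\textbf{Injectivity.} We use the retraction $R$ with $R\circ\Phi=id_{\fgt\hat{A}_\bullet}$, a semisimplicial identity. Since fat realization is a functor on $\semisSet$, we get $\|R\|\circ\|\Phi\|=id$. So if $\|\Phi\|\circ\alpha\simeq\|\Phi\|\circ\beta$, applying $\|R\|$ gives $\alpha\simeq\beta$. This gives injectivity on $[S^n,-]$ and, via basepoints fixed as above, on $\pi_n$.

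\textbf{Combining.} Together these give isomorphisms on all homotopy groups, so $\|\Phi\|$ is a weak homotopy equivalence. Using Proposition~\ref{fgtsimeq}, this identifies $\hat{A}\simeq\|\fgt\hat{A}_\bullet\|\simeq\|\fgt\hat{A}'_\bullet\|\simeq|\hat{A}'_\bullet|$, which is the last leg of Diagram~(\ref{zigzag}).

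The main obstacle is the based and all-components bookkeeping in the surjectivity step. Lemma~\ref{surjective} is phrased for free homotopy classes and a single sphere, so we must check that the inductive cylinder pushes respect basepoints. We would try to secure this by showing that $\mathsf{Cyl}$ is trivial on simplices already in $\hat{A}_\bullet$. If that fails, the fallback is to argue on $\pi_1$ separately, using free homotopy classes modulo the $\pi_1$-action, and to use homology via Lemma~\ref{chain} with the retraction for higher degrees.
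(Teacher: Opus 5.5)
Your argument is the paper's. Surjectivity on $[S^n,-]$ comes from Lemma~\ref{surjective}, injectivity from the semisimplicial retraction $R\circ\Phi=\mathrm{id}$, and the paper stops there, working only with free homotopy classes.

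\textbf{The based refinement is justified incorrectly.} This is the one step you add. Even granting $R_0G=G$ and $\eta_G=\mathrm{id}$, the vertex cylinder $\mathsf{Cyl}(G)$ is the $1$-simplex $s_0G\in\hat{A}'_1$, the constant diagram at $G$ with identity maps.
\begin{itemize}
\item The fat realization does not collapse it, so $\|\mathsf{Cyl}(G)\|$ is a genuine loop at $G$, not a point.
\item It is also generally not in $\hat{A}_1$. Injective fibrancy of a $(\tilde\Delta^1)^{op}$-diagram $X_0\leftarrow X_{01}\to X_1$ requires $X_{01}\to X_0\times X_1$ to be a fibration, and here that map is the diagonal $G\to G\times G$.
\end{itemize}
So the claim ``vertex cylinders are constant, hence the deformation fixes vertices'' fails as stated. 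The same observation shows that the last sentence of Lemma~\ref{surjective} needs one more step. The loops $\|s_0G\|$ are not yet in $\|\fgt\hat{A}_\bullet\|$ and must still be contracted, e.g.\ through the contractible subcomplex spanned by $s_0s_0G$ as in the proof of Proposition~\ref{fgtsimeq}. That subcomplex generally meets $\|\fgt\hat{A}_\bullet\|$ only in $G$.

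\textbf{The based refinement is not needed.} Neither is your homology fallback: free surjectivity plus a retraction already forces based isomorphisms. Write $P=\|\fgt\hat{A}_\bullet\|$, $P'=\|\fgt\hat{A}'_\bullet\|$, $i=\|\Phi\|$, $r=\|R\|$, and fix $x\in P$.
\begin{itemize}
\item Since $r_*i_*=\mathrm{id}$, every $y\in\pi_n(P',i(x))$ factors as $\bigl(y\cdot(i_*r_*y)^{-1}\bigr)\cdot i_*r_*(y)$. The first factor lies in $\ker r_*$, so it suffices to show $\ker r_*$ is trivial.
\item Take $k\in\ker r_*$. Free surjectivity gives a lift $\alpha$ with $i\circ\alpha$ freely homotopic to $k$. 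The $\pi_0$-bijection puts $\alpha$ in the component of $x$.
\item Hence $k=g\cdot i_*(a)$ for some $g\in\pi_1(P',i(x))$ and $a\in\pi_n(P,x)$, with $g$ acting by conjugation when $n=1$.
\item Applying $r_*$ gives $r_*(g)\cdot a=r_*(k)$, which is trivial. So $a$ is trivial, and therefore $k$ is trivial.
\end{itemize}
Thus $i_*$ is an isomorphism on every $\pi_n$ at every basepoint. This is exactly what the paper's two-line proof tacitly uses.
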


We return to the diagram of the zigzags of maps in the beginning of this section, Diagram~\ref{zigzag}.
    \begin{center}
    \begin{tikzcd}
B\Aut(A) \arrow[rd,"\text{Prop. } \ref{first}"] &                                          & {{|\scr{N}^{\mathrm{}}\scr{C}_{A,w}|}\simeq {|\scr{N}^{\mathrm{}}\scr{C}^{op}_{A,w}|}} \arrow[ld,"\text{Prop. }\ref{second}"] \arrow[rd, "\text{Prop. } \ref{third}"] &                                              & {\|\fgt\hat{A}_\bullet\|\simeq \hat{A}} \\
                    & {{|\scr{N}^{\mathrm{hc}}\scr{C}_{A,w}|}} &                                                                                                              & {|\hat{A}'_\bullet|\simeq \|\fgt\hat{A}'_\bullet\|} \arrow[ru,"\text{Prop. } \ref{fourth}"] &                                
\end{tikzcd}
    \end{center} Each of the maps in this diagram are induced by a simplicial weak equivalence or a semisimplicial map that induces a weak equivalence.

    \subsection{Classification of Homogeneous Functors in Manifold Calculus}
    \label{section46}
    The goal of his section is to show Theorem~\ref{class} stated in the introduction. Let $\scr{M}$ be a simplicial model category.
\begin{definition}
    \label{metadef} Given a smooth manifold $M$, a functor $F:\scr{O}(M)^{op}\to \scr{M}$ is said to be good if the following two conditions are satisfied,
    \begin{itemize}
        \item If $U\subseteq V\in \scr{O}(M)$ is an isotopy equivalence, then $F(V)\to F(U)$ is a weak equivalence.
        \item Given $U_i\subseteq U_{i+1}\in\scr{O}(M)$, the canonical map $F(\cup_{i}U_i)\to \mathrm{holim}_iF(U_i)$ is a weak equivalence.
    \end{itemize}

    A good functor $F:\scr{O}(M)^{op}\to \scr{M}$ is polynomial of degree $\leq k$  for some $k\in \mathbb{N}$, if given any open set $U\in \scr{O}(M)$, for any collection of pairwise disjoint nonempty closed sets $A_0,...,A_k\subset U$, the canonical map
    \begin{align*}
        F(U)\to \mathrm{holim}_{\emptyset\neq S\subseteq \{0,...,k\}}F(U-\cup_{i\in S}A_i)
    \end{align*} is a weak equivalence.

    Let $B_k(M)$ be the full subcategory $\scr{O}(M)$ whose objects are the open sets that are diffeomorphic to at most $k$ disjoint open balls. 

    For any good functor $F:\scr{O}(M)^{op}\to \scr{M}$, there is a polynomial functor of degree $\leq k$, $\scr{T}_kF:\scr{O}(M)^{op}\to \scr{M}$ defined objectwise for any $U\in \scr{O}(M)$ as
    \begin{align*}
        \scr{T}_kF(U):=\underset{V\in B_k(U)}{\mathrm{holim}}F(V)
    \end{align*}
\end{definition}

We now define homogeneous functors of degree $k$.
\begin{definition}
    \label{homog}A functor $F:\scr{O}(M)^{op}\to \scr{M}$ is homogeneous of degree $k$, if 
    \begin{itemize}
        \item $F$ is a polynomial of degree $\leq k$, and 
        \item $\scr{T}_{k-1}F\to *$ is a weak equivalence. Here, $*\in \scr{M}$ is the terminal object of $\scr{M}$.
    \end{itemize}
\end{definition}

    Let $\scr{F}_{kA}(\scr{O}(M)^{op}, \scr{M})$ denote the full subcategory of $\Fun(\scr{O}(M)^{op},\scr{M})$ whose objects are the homogeneous functors of degree $k$ such that $F(U)\simeq A$ for any $U\in \scr{O}(M)$ that is diffeomorphic to the disjoint union of $k$ disjoint open balls. Recall the relation $\mathfrak{w}$ in Section~\ref{section42} on $\scr{F}_{kA}(\scr{O}(M)^{op},  \scr{M})$, where $F\mathfrak{w}F'$ if there is a zig zag of objectwise weak equivalences between $F$ and $F'$.

\begin{theorem}
   \label{tsopmene2024} \cite[Theorem 1.1]{tsopmene2024classificationhomogeneousfunctorsmanifold}
Given any fibrant-cofibrant object $A\in \scr{M}$, the set 
$\scr{F}_{kA}(\scr{O}(M)^{op}, \scr{M})/\frak{w}$ can be described as follows:
    \begin{itemize}
        \item If $k=1$, then $\scr{F}_{1A}(\scr{O}(M)^{op}, \scr{M})/\mathfrak{w}\cong [M, \hat{A}]$
        \item If $k>1$ and $\scr{M}$ is pointed, then $\scr{F}_{kA}(\scr{O}(M)^{op}, \scr{M})/\mathfrak{w}\cong [F_k(M), \hat{A}]$
    \end{itemize}
\end{theorem}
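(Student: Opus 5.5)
This theorem is quoted from \cite[Theorem 1.1]{tsopmene2024classificationhomogeneousfunctorsmanifold}, so in the paper it is simply cited. If I had to reconstruct it, I would follow the route already used in the proof of Proposition~\ref{second}, in three stages.

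\textbf{Step 1 (reduction to diagrams on a triangulation).} I would invoke \cite[Proposition 4.8, Proposition 4.15]{tsopmene2024classificationhomogeneousfunctorsmanifold}, which give a bijection
\[\scr{F}_{kA}(\scr{O}(M)^{op}, \scr{M})/\mathfrak{w}\cong \Fun(U(\scr{T})^{op},\scr{C}_{A,w})/\mathfrak{w}.\]
Here $\scr{T}$ is a triangulation of $M$ when $k=1$, or of the configuration space $F_k(M)$ when $k>1$. This step is the geometric heart of the argument. A homogeneous functor of degree $k$ is recovered, up to $\mathfrak{w}$, from its restriction to open sets that are unions of $k$ balls; this uses polynomiality and the vanishing of $\scr{T}_{k-1}F$. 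Such unions of $k$ balls are indexed, up to isotopy, by points of $F_k(M)$. Choosing tubular neighborhoods indexed by the simplices of $\scr{T}$ produces a diagram on $U(\scr{T})^{op}$ in which every map is a weak equivalence.

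The pointedness hypothesis for $k>1$ enters exactly here. It is what makes $\scr{T}_{k-1}F\simeq *$ force the values of $F$ on fewer than $k$ balls to be trivial. Conversely, it lets one extend a diagram back to a homogeneous functor via a homotopy right Kan extension followed by a fiber construction.

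\textbf{Step 2 (passage to simplicial sets).} Nerves of $\mathfrak{w}$-classes become connected components of mapping spaces:
\[\Fun(U(\scr{T})^{op},\scr{C}_{A,w})/\mathfrak{w}\cong \pi_0\Fun(\scr{N}U(\scr{T})^{op},\scr{N}\scr{C}_{A,w}).\]
I would then identify $\scr{N}U(\scr{T})$ with the barycentric subdivision $\mathrm{sd}$ of the simplicial complex $\scr{T}$, so that $|\scr{N}U(\scr{T})|$ is homeomorphic to $M$ or to $F_k(M)$. Using the adjunction $\mathrm{sd}\dashv\mathrm{Ex}$ together with Equation~(\ref{AhatprimeAsEX}), functors $(\tilde\Delta^n)^{op}\to\scr{C}_{A,w}$ glued along $\scr{T}$ correspond to maps of semisimplicial sets from $\mathrm{Core}(\scr{T})$ to $\fgt\hat{A}'_\bullet$. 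Up to homotopy, these are the same as maps into $\fgt\hat{A}_\bullet$, by the retraction $R$ and the cylinders $\mathsf{Cyl}(F)$ of Lemma~\ref{cylinderA}.

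\textbf{Step 3 (homotopy classes).} It remains to show that the induced function to $[|\scr{T}|,\hat{A}]$ is a bijection. Here I would use the fact that $\hat{A}_\bullet$ satisfies the Kan condition (respectively, that $\mathrm{Ex}$ of a nerve of a groupoid-like category is Kan after fibrant replacement) together with simplicial approximation. Every map $|\scr{T}|\to\hat{A}$ is homotopic to a simplicial one after subdivision, and subdivision does not change the $\mathfrak{w}$-class. Homotopies are handled by the same argument applied to $\scr{T}\times\Delta[1]$, using the cylinder construction of Lemma~\ref{cylinderA}.

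I expect Step 1 to be the main obstacle, specifically proving that the restriction functor to diagrams on $U(\scr{T})^{op}$ is injective on $\mathfrak{w}$-classes. My first approach would be to compare any $F$ with the homotopy right Kan extension of its restriction, and show the comparison map is an objectwise weak equivalence using the polynomial condition and induction on handle index.
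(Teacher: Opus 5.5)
The paper gives no proof of this statement. It is quoted from \cite[Theorem 1.1]{tsopmene2024classificationhomogeneousfunctorsmanifold}, and your Step~1 defers all the manifold-calculus content to the same source (Propositions 4.8 and 4.15 there). What your proposal actually adds is the reduction in Steps~2--3 from $\Fun(U(\scr{T})^{op},\scr{C}_{A,w})/\mathfrak{w}$ to $[|\scr{T}|,\hat{A}]$, and most of that reduction is sound.

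View $\scr{T}$ as an ordered simplicial complex $X$. A diagram $F$ is a map $\mathrm{sd}X\to\scr{N}\scr{C}^{op}_{A,w}$, hence a map $X\to\hat{A}'_\bullet$ by (\ref{AhatprimeAsEX}), hence, via $\mathrm{Core}$ and $R$, a simplicial map $X\to\hat{A}_\bullet$. A natural weak equivalence $F\to G$ is a map $\mathrm{sd}X\times\Delta[1]\to\scr{N}\scr{C}^{op}_{A,w}$. Precomposing with $\mathrm{sd}(X\times\Delta[1])\to\mathrm{sd}X\times\mathrm{sd}\Delta[1]\to\mathrm{sd}X\times\Delta[1]$ (last vertex map on the second factor) and applying $R$ gives a homotopy, so the map is well defined. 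If $\hat{A}_\bullet$ is Kan, every class in $[|X|,\hat{A}]$ is represented by a simplicial $f$ with $f=R(\Phi f)$, which gives surjectivity.

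That Kan property is nowhere proved in this paper and must be imported. Once you have it, simplicial approximation and subdivision are unnecessary. In any case, ``subdivision does not change the $\mathfrak{w}$-class'' would need a comparison functor between $U(\scr{T})$ and $U(\scr{T}')$. Similarly, relating $F$ to the diagram assembled from the $R_n(F|_\sigma)$ only needs the natural weak equivalences $\eta_F$, glued over $\scr{T}$ by face compatibility. The cylinders of Lemma~\ref{cylinderA} are not needed for that.

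\textbf{The gap is injectivity.} A homotopy $H:X\times\Delta[1]\to\hat{A}_\bullet$ between $R(F)$ and $R(G)$ becomes, after composing with $\Phi$ and using $\mathrm{sd}\dashv\mathrm{Ex}$, a functor $K:P^{op}\to\scr{C}_{A,w}$, where $P$ is the face poset of the triangulated prism $X\times\Delta[1]$. You must still produce a zigzag of natural weak equivalences between the two end restrictions of $K$, as diagrams on the fixed category $U(\scr{T})^{op}$.

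``The same argument applied to $\scr{T}\times\Delta[1]$, using the cylinder construction'' does not do this, since the cylinders only connect a diagram to its fibrant replacement. Nor is the step formal, for two reasons:
\begin{itemize}
\item $\scr{N}\scr{C}_{A,w}$ is the nerve of a category that is in general not a groupoid, hence not Kan. So $\pi_0\Fun(\scr{N}U(\scr{T})^{op},\scr{N}\scr{C}_{A,w})$ cannot be identified with homotopy classes by homotopy invariance.
\item No simplex of $\sigma\times\Delta^1$ contains both $\sigma\times 0$ and $\sigma\times 1$ once $\dim\sigma\geq 1$. So the two ends are not joined by a single span in $K$.
\end{itemize}

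One way to close the gap is a staircase argument.
\begin{itemize}
\item Choose $\scr{T}$ to be a barycentric subdivision. Its vertices then carry a coloring $\mathrm{col}$ by dimension, with values in $\{0,\dots,d\}$, that induces the vertex order on every simplex.
\item For $-1\leq c\leq d$ set $h_c(\sigma)=\{(v,0): v\in\sigma,\ \mathrm{col}(v)\leq c\}\cup\{(v,1): v\in\sigma,\ \mathrm{col}(v)>c\}$, and for $c<d$ set $g_c=h_c\cup h_{c+1}$.
\item These are monotone maps $U(\scr{T})\to P$. Each value is a simplex of the prism because $\sigma$ has at most one vertex of each color.
\item The inclusions $h_c\subseteq g_c\supseteq h_{c+1}$ give natural weak equivalences $Kh_c\leftarrow Kg_c\to Kh_{c+1}$. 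This is a finite zigzag from $h_{-1}$ (the $1$-end) to $h_d$ (the $0$-end).
\end{itemize}

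Finally, your account of where pointedness enters is off. $\scr{T}_{k-1}F\simeq *$ forces $F(V)\simeq *$ for $V$ with fewer than $k$ components in any $\scr{M}$, because $V$ is terminal in $B_{k-1}(V)$. Pointedness is needed to rebuild $F$ from its values on $\scr{I}^{(k)}$. An inclusion of $k$-component sets that is not an isotopy equivalence factors through a set with fewer components, so it must be sent to a map through $*$, and that requires a basepoint.
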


\begin{theorem}[Theorem~\ref{class}]
Given any fibrant-cofibrant object $A\in \scr{M}$, 
    \begin{itemize}
        \item If $k=1$, then $\scr{F}_{1A}(\scr{O}(M)^{op}, \scr{M})/\mathfrak{w}\cong [M, B\Aut(A)]$
        \item If $k>1$ and $\scr{M}$ is pointed, then $\scr{F}_{kA}(\scr{O}(M)^{op}, \scr{M})/\mathfrak{w}\cong [F_k(M), B\Aut(A)]$
    \end{itemize}
\end{theorem}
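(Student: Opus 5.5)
The plan is to combine Theorem~\ref{tsopmene2024} with Theorem~\ref{main}. The only step needing care is passing from a weak equivalence $\hat{A}\simeq B\Aut(A)$, realized as a zigzag, to a bijection of homotopy classes of maps out of $M$, respectively $F_k(M)$. By Theorem~\ref{tsopmene2024} we already have $\scr{F}_{1A}(\scr{O}(M)^{op},\scr{M})/\mathfrak{w}\cong[M,\hat{A}]$ for $k=1$. For $k>1$ with $\scr{M}$ pointed we have $\scr{F}_{kA}(\scr{O}(M)^{op},\scr{M})/\mathfrak{w}\cong[F_k(M),\hat{A}]$. It therefore suffices to produce natural bijections $[X,\hat{A}]\cong[X,|B\Aut(A)|]$ for $X=M$ and $X=F_k(M)$.

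For this I would use the zigzag of Diagram~(\ref{zigzag}). Every arrow there is either the geometric realization of a weak equivalence of simplicial sets (Propositions~\ref{first}, \ref{second}, \ref{third} and \ref{esdopposite}), the homeomorphism or homotopy equivalence comparing $|X|$ and $\|\fgt X\|$ (Propositions~\ref{homeo} and \ref{fgtsimeq}), or the fat realization of $\Phi$, a weak homotopy equivalence by Proposition~\ref{fourth}. Thus all terms are spaces connected by weak homotopy equivalences.

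For a map $f\colon Y\to Z$ that is a weak homotopy equivalence and a CW complex $X$, Whitehead's theorem (in the form: weak equivalences induce bijections $[X,Y]\to[X,Z]$ for CW $X$) gives that $f_*$ is a bijection. This holds regardless of the direction of the arrow in the zigzag, since each arrow induces a bijection and we compose the bijections and their inverses. Both $M$ and $F_k(M)$ have the homotopy type of CW complexes: a smooth manifold admits a triangulation, and $F_k(M)$ is itself a smooth manifold, an open subset of $M^k/\Sigma_k$ on which $\Sigma_k$ acts freely on the ordered configuration space. Any homotopy equivalence $X\simeq X'$ with $X'$ CW also induces a bijection on $[-,Y]$. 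So we obtain $[M,\hat{A}]\cong[M,B\Aut(A)]$ and $[F_k(M),\hat{A}]\cong[F_k(M),B\Aut(A)]$, where $B\Aut(A)$ means its geometric realization. Composing with Theorem~\ref{tsopmene2024} yields both cases of the theorem.

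The main obstacle is essentially bookkeeping. First, one must check that the intermediate spaces such as $\|\fgt\hat{A}_\bullet\|$ are CW complexes, or at least that only the source $X$ needs to be CW; the latter is the case, since the bijection $[X,Y]\to[X,Z]$ for a weak equivalence $Y\to Z$ requires only $X$ to be CW. Second, one must make sure the paper's convention $[X,-]$ denotes free homotopy classes consistently on both sides. Free homotopy classes are preserved by weak equivalences for CW sources, so no basepoint issues arise.
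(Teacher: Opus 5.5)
Your proposal is correct and follows the paper's route: the paper's proof likewise just combines Theorem~\ref{main} with Theorem~\ref{tsopmene2024}. Your extra step makes explicit what the paper leaves implicit, namely that each arrow of the zigzag is a weak homotopy equivalence and so induces a bijection on free homotopy classes $[X,-]$ for $X$ of the homotopy type of a CW complex, which covers $M$ and $F_k(M)$.
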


\begin{proof}
    From Theorem~\ref{main} we have that $\hat{A}\simeq B\Aut(A)$, then the statement follows from Theorem~\ref{tsopmene2024}.
\end{proof}
    
\section{When \protect\boldmath$\scr{M}=\Top$}\label{section5}

In this section, we will show that are classification result (Theorem~\ref{class}) is equivalent to that of Weiss in \cite{Weiss_1999}, for $\scr{M}=\Top$, the category of topological spaces. 

\begin{definition}
    \cite[Example 7.1]{Weiss_1999} Given a fibration $p:Z\to F_k(M)$, define $\scr{F}_p:\scr{O}(M)^{op}\to \Top$ as follows: let $\Gamma(p,V)$ denote the space of partial sections of $V\subseteq F_k(M)$, for any $U\in \scr{O}(M)$, 
    \begin{align*}
        \scr{F}_p(U):=\Gamma(p; F_k(U))
    \end{align*} 
\end{definition}
This functor is a polynomial functor of degree $\leq k$. Intuitively, this is because the space of sections of a fibration over an open set $U$ is obtained by gluing sections over smaller open sets that covers $U$ along the intersections. For the case of $F_k(M)$, if we wish to find a covering of $V_i\hookrightarrow F_k(U)$, such that $V_i=F_k(U_i)$ for some $U_i\subseteq U$, then we will need at least $k$ such open subsets.

Denote, for any $U\in \scr{O}(M)$, $\triangle_k(U):=(U^k/\Sigma_k)\backslash F_k(U)$. Let $Q(U)\subseteq \scr{O}(U^k/\Sigma_k)$ such that for any $V\in Q(U)$, $V\cap \triangle_k(U)\neq \emptyset$. 
\begin{definition}
     \cite[Lemma 7.2]{Weiss_1999}Given a fibration $p:Z\to F_k(M)$, define $\scr{G}_p:\scr{O}(M)^{op}\to \Top$ as follows: 
     \begin{align*}
         \scr{G}_p(U):=\mathrm{hocolim}_{V\in Q(U)}\Gamma(p, V\cap F_k(U))
     \end{align*}
     This is a good functor. Moreover, this is a polynomial functor of degree $\leq k-1$\cite[Proposition 7.5]{Weiss_1999}.
\end{definition}
The functor $\scr{G}_p$ is polynomial of degree $\leq k-1$ because when our open set $F_k(U)$ of $F_k(M)$ are very close to $\triangle_k(M)$, we only need at most $k-1$ open subsets of $U$ to cover $F_k(U)$, because they sufficiently close to each other. 

Then, there is an inclusion of functors  $\eta_p:\scr{F}_p\to \scr{G}_p$. In fact, this $\eta_p$ is exhibits $\scr{G}_p$ as a polynomial functor of degree $\leq k-1$ approximation of $\scr{F}_p$ \cite[Proposition 7.6]{Weiss_1999}. 

Suppose that we fix a point $s\in \scr{G}_p(M)$, then we can construct a functor $\scr{E}_{p,s}$ as follows: for any $U\in \scr{O}(M)$
\begin{align*}
   \scr{E}_{p,s}(U):=\mathrm{hofib}(\scr{F}_p(U)\to \scr{G}_p(U))
\end{align*}
This is then a homogeneous functor of degree $ k$.

Let $S\in F_k(M)$ and let $U_S\in \scr{O}(M)$ be a choice of tubular neighbourhood of the points in $M$ corresponding to $S$. Then,  there is a weak homotopy equivalence $\scr{E}_{p,s}(U_S)\simeq p^{-1}(S)$. In fact, there is the following result.
\begin{theorem}
    \cite[Theorem 8.5]{Weiss_1999} Given a space $A\in \Top$, for any $E\in \scr{F}_{kA}(\scr{O}(M)^{op},\Top)$, there is a fibration $p:Z\to F_k(M)$ such that for any $S\in F_k(M)$, $p^{-1}(S)\simeq A$, such that $E\simeq \scr{E}_{p,s}$.
\end{theorem}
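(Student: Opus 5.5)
The plan is to deduce this statement from the classification of homogeneous functors in Theorem~\ref{class}, specialized to $\scr{M}=\Top$, and then to compare with the classical classification of fibrations. I will work with cofibrant–fibrant $A$ (every space is fibrant; replace $A$ by a CW approximation). For $k>1$, Theorem~\ref{class} needs a pointed $\scr{M}$, so in that case I pass to $\Top_*$; I expect this to be harmless, since in Weiss's setting $\scr{E}_{p,s}$ is pointed by the section $s$. The second input is the Stasheff--May classification of fibrations. For a CW base $X$ (such as $F_k(M)$), fiber homotopy classes of fibrations with fibers $\simeq A$ are in bijection with $[X,B\Aut(A)]$. This bijection is realized by pulling back a universal quasifibration $E\Aut(A)\times_{\Aut(A)}A\to B\Aut(A)$, made into a fibration.

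\textbf{Constructing $p$.} Given $E\in\scr{F}_{kA}(\scr{O}(M)^{op},\Top)$, Theorem~\ref{class} provides a homotopy class $f_E:F_k(M)\to B\Aut(A)$. I define $p:Z\to F_k(M)$ as the pullback of the universal fibration along $f_E$. Every fiber of $p$ is then weakly equivalent to $A$. By the discussion preceding the statement, $\scr{E}_{p,s}$ is homogeneous of degree $k$ and satisfies $\scr{E}_{p,s}(U_S)\simeq p^{-1}(S)\simeq A$, so $\scr{E}_{p,s}\in\scr{F}_{kA}$. Since the map of Theorem~\ref{class} is injective, it then suffices to prove $f_{\scr{E}_{p,s}}=f_E$. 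This follows from the following claim.

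\textbf{Claim.} For every fibration $q$ with fiber $A$, the class $f_{\scr{E}_{q,s}}$ is the classifying map of $q$.

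To prove the claim, I unwind the zigzag (\ref{zigzag}) together with the bijection of \cite[Propositions 4.8, 4.15]{tsopmene2024classificationhomogeneousfunctorsmanifold}. Fix a triangulation $\scr{T}$ of $F_k(M)$. The functor $\scr{E}_{q,s}$ gives a diagram $U(\scr{T})^{op}\to\scr{C}_{A,w}$. On a simplex $\sigma$ of $\scr{T}$, its value is, up to weak equivalence, the space of sections of $q$ over a contractible neighbourhood of $\sigma$, which is $\simeq q^{-1}(\text{barycenter})$. Its structure maps are the restriction maps. On the fibration side, the classifying map of $q$ can be built simplex-wise in the same way. Over the barycentric subdivision, one sends each flag of simplices to the corresponding chain of restriction equivalences among the fibers. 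This gives a map $\scr{N}U(\scr{T})^{op}\to\scr{N}\scr{C}_{A,w}$. Composing with Propositions~\ref{second} and~\ref{first}, it becomes a map into $B\Aut(A)$; this is the standard ``nerve of the fiber transport'' description of the classifying map.

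Both descriptions therefore produce the same element of $\pi_0\Fun(\scr{N}U(\scr{T})^{op},\scr{N}\scr{C}_{A,w})$, up to the $\mathfrak{w}$-relation. This proves the claim, and hence $E\,\mathfrak{w}\,\scr{E}_{p,s}$.

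\textbf{Main obstacle.} The hard step is this compatibility: checking that the universal fibration over $B\Aut(A)$, transported along the zigzag $B\Aut(A)\to\scr{N}^{\mathrm{hc}}\scr{C}_{A,w}\leftarrow\scr{N}\scr{C}_{A,w}$, corresponds to the tautological diagram sending an object to itself. My first attempt is to use the model $\scr{N}\scr{C}_{A,w}$ directly. Over it there is a tautological fibration, namely the homotopy colimit (Grothendieck construction) of the inclusion $\scr{C}_{A,w}\to\Top$. Its pullback along the nerve of any diagram $D\to\scr{C}_{A,w}$ is $\mathrm{hocolim}_D$ of that diagram. By Quillen's Theorem B this homotopy colimit is a quasifibration with fiber $A$, since all maps in the diagram are weak equivalences. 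Matching this with the universal fibration under Proposition~\ref{first} then reduces to the known identification of $\mathrm{hocolim}$ over $B\Aut(A)$ with the Borel construction.
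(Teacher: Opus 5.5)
The paper only quotes this result from Weiss, and in Section~\ref{section5} it uses it in the opposite direction (Weiss plus May gives agreement with Theorem~\ref{class}). Deducing it from Theorem~\ref{class} would be a new route, and it is not circular, but your proposal has two genuine gaps.

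\textbf{The case $k>1$.} Passing to $\Top_*$ is not harmless. The $s$ in $\scr{E}_{p,s}$ is a point of $\scr{G}_p(M)$. That is a germ of a section near the fat diagonal, not a global section, so $\scr{E}_{p,s}$ is not pointed and its values can be empty.

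For example, take $M=S^1$ and $k=2$. Then $F_2(S^1)$ is an open M\"obius band. The punctured neighbourhood of the diagonal is an annulus whose inclusion into $F_2(S^1)\simeq S^1$ has degree $2$. Let $p$ be the trivial fibration with fiber $S^1$, and let $s$ be a germ of degree $1$. Then $\scr{E}_{p,s}$ is homogeneous of degree $2$ with $\scr{E}_{p,s}(U_S)\simeq S^1$. However, $\scr{E}_{p,s}(S^1)=\emptyset$, because no global section restricts to $s$ up to homotopy. So this $E$ is not weakly equivalent to any $\Top_*$-valued functor, and Theorem~\ref{class}, which needs a pointed $\scr{M}$ for $k>1$, does not apply to it.

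Germs of even degree give a functor with the same $p$ that is non-equivalent (its value on $S^1$ is nonempty). So in unpointed $\Top$ the germ is genuine extra data, and for $k>1$ no argument that factors through a bijection with $[F_k(M),B\Aut(A)]$ can work.

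Even for $E$ that do lift, Theorem~\ref{class} in $\Top_*$ produces $B\Aut_*(A)$, built from pointed self-equivalences. This space classifies fibrations equipped with a section, not the unpointed $B\Aut(A)$ of May's theorem. That mismatch is repairable (take $s$ to be the germ of the resulting global section), but your argument does not account for it.

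\textbf{The case $k=1$.} Here Theorem~\ref{class} does apply in $\Top$, but everything rests on your Claim, which is asserted rather than proved. ``Both descriptions produce the same element'' is precisely what has to be shown.

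The bijection of Theorem~\ref{class} is the Tsopm\'en\'e--Stanley bijection with $[M,\hat A]$, composed with the zigzag of Section~\ref{section4}. That zigzag passes through $\scr{N}^{\mathrm{hc}}\scr{C}_{A,w}$, the edgewise subdivision and the opposite, $\mathrm{Ex}$, and the retraction $R$. To identify the resulting class with May's classifying map, you would need two facts, and neither is addressed:
(i) what the Tsopm\'en\'e--Stanley bijection does explicitly to $E$ (you assume it is restriction to the open sets indexed by $U(\scr{T})$);
(ii) that May's universal fibration over $B\Aut(A)$ and the Grothendieck construction over $\scr{N}\scr{C}_{A,w}$ correspond across $B\Aut(A)\to\scr{N}^{\mathrm{hc}}\scr{C}_{A,w}\leftarrow\scr{N}\scr{C}_{A,w}$, even though no fibration over the middle term is available to compare through.

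Your ``first attempt'' already does most of the real work on its own, without Theorem~\ref{class}. Applying the Grothendieck construction to $E$ restricted to $U(\scr{T})^{op}$ and using Quillen's Theorem~B gives, directly from $E$, a quasifibration over $|U(\scr{T})|\simeq M$ with fibers $\simeq A$. What then remains is the comparison $E\simeq\scr{E}_{p,s}$. Your route replaces that comparison with a compatibility statement that is at least as hard.
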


In other words, $\scr{F}_{kA}(\scr{O}(M)^{op},\Top)/\frak{w}$ is in bijection with the equivalence classes of fibration $p:Z\to F_k(M)$ with each fiber homotopic to $A$.

We now recall some results on the classification of fibration over a space $X\in \Top$.

\begin{theorem}
    \cite[Corollary 9.5]{MayFib} Given a cofibrant topological space $A\in \Top$ (a CW complex), the equivalence classes of fibration over $X$ such that every fiber has the homotopy type of $A$ is then in bijection with $[X, B\Aut(A)]$
\end{theorem}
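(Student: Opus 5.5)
The statement to prove is the cited result of May (Corollary 9.5 in \cite{MayFib}): for a CW complex $A$, equivalence classes of fibrations over $X$ with fibres of the homotopy type of $A$ correspond bijectively to $[X, B\Aut(A)]$. I would derive it from the universal quasifibration over the bar construction. Let $\Aut(A)$ be the topological monoid of self homotopy equivalences of $A$. Since $A$ is CW, $\Aut(A)$ is grouplike and its identity is a nondegenerate basepoint.

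\textbf{Step 1: the universal object.} Form the two-sided bar construction $E = B(*,\Aut(A),A)$ together with the projection $q:E\to B(*,\Aut(A),*)=B\Aut(A)$. Two facts are needed here:
\begin{itemize}
\item $q$ is a quasifibration with fibre $A$. This follows from the Dold--Thom criterion, filtering by simplicial skeleta; it uses that each $a\in\Aut(A)$ acts on $A$ by a homotopy equivalence.
\item Replacing $q$ by the associated Hurewicz fibration $\Gamma q$ gives a fibration $p_\infty$ with fibres of the homotopy type of $A$.
\end{itemize}
Given a map $f:X\to B\Aut(A)$, the pullback $f^*p_\infty$ is a fibration over $X$ with fibres $\simeq A$. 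Homotopic maps give fibre homotopy equivalent pullbacks by the homotopy lifting property. This defines a function $\Psi:[X,B\Aut(A)]\to\{\text{classes of fibrations}\}$.

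\textbf{Step 2: an inverse.} Given a fibration $p:Z\to X$ with fibres $\simeq A$, consider the space $\mathrm{Heq}(A,p)$ of pairs $(x,\phi)$ with $\phi:A\to p^{-1}(x)$ a homotopy equivalence. It carries a right action of $\Aut(A)$ and a projection to $X$ that is a principal quasifibration; this is where the CW hypothesis on $A$ is used. The comparison maps
\[
X \xleftarrow{\ \sim\ } B(\mathrm{Heq}(A,p),\Aut(A),*) \longrightarrow B(*,\Aut(A),*)
\]
have a left arrow that is a weak equivalence, because the left-hand bar construction is a quasifibration over $X$ with contractible fibres $B(\Aut(A),\Aut(A),*)$. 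After a CW approximation of $X$, inverting that arrow gives a classifying map $X\to B\Aut(A)$. A parallel comparison
\[
B(\mathrm{Heq},\Aut(A),A)\to Z
\]
shows that $p$ is fibre homotopy equivalent to the pullback of $p_\infty$. So $\Psi$ is surjective.

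\textbf{Step 3: injectivity.} Suppose two maps classify equivalent fibrations. Apply the Step 2 construction to the fibration over $X\times I$ obtained by gluing the two fibrations along a fibre homotopy equivalence. This produces a homotopy between the two classifying maps, and naturality of the construction in the base identifies its ends with the original maps.

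\textbf{Main obstacle.} I expect the quasifibration claims to be the hardest part, namely that $q$ and the projection $\mathrm{Heq}(A,p)\to X$ are quasifibrations with the correct fibres. These require May's dold-type gluing lemmas and well-pointedness. I would prove them by the skeletal filtration together with the uniform-homotopy-equivalence criterion for quasifibrations.
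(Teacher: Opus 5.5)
The paper gives no proof of this statement. It is quoted from May's memoir and used only in Section~\ref{section5}, so the comparison is with May's bar-construction argument. Your outline reproduces its architecture: the universal quasifibration $B(*,\Aut(A),A)\to B\Aut(A)$, the principal object $\mathrm{Heq}(A,p)$, and the two comparison zigzags. As written, though, it has three concrete gaps.

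\textbf{First gap: the unit of $\Aut(A)$ need not be nondegenerate.} Your claim that it is nondegenerate because $A$ is CW is false. Take $A=\mathbb{N}$, a $0$-dimensional CW complex. Then $\Aut(A)$ is the symmetric group $S_\infty$ with the topology of pointwise convergence. In $S_\infty$ every path is constant, yet the identity is not isolated. So no neighbourhood of the identity deforms to it, and $\{\mathrm{id}\}\to\Aut(A)$ is not a cofibration. Every quasifibration statement you make about bar constructions is proved by the skeletal Dold--Thom induction. That induction relies on the skeletal filtration being by cofibrations, i.e.\ on the unit being nondegenerate. This covers $B(*,\Aut(A),A)\to B\Aut(A)$ and $B(\mathrm{Heq}(A,p),\Aut(A),*)\to X$. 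The fix is to whisker the unit first: replace $\Aut(A)$ by a well-pointed monoid with a monoid map to $\Aut(A)$ that is a homotopy equivalence.

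\textbf{Second gap: the hypothesis on the base.} Inverting $\pi\colon B(\mathrm{Heq}(A,p),\Aut(A),*)\to X$ ``after a CW approximation of $X$'' gives a map out of the approximation, not out of $X$. You need $X$ to have the homotopy type of a CW complex. This is the hypothesis in May's theorem; the quoted statement drops it, but it holds for $F_k(M)$. With it, Whitehead's theorem gives a homotopy section $s$ of $\pi$. The classifying map is then $\beta s$, where $\beta$ is induced by $\mathrm{Heq}(A,p)\to *$.

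\textbf{Third gap: the ends of the homotopy in Step~3.} Naturality alone does not identify the ends of your homotopy with $f_0$ and $f_1$. You need $\beta\simeq f\circ\pi$ on $B(\mathrm{Heq}(A,f^*p_\infty),\Aut(A),*)$, so that $\beta s\simeq f\pi s\simeq f$. Since $\mathrm{Heq}(A,f^*p_\infty)=f^*\mathrm{Heq}(A,p_\infty)$, naturality reduces this to the universal case: $\beta\simeq\pi$ on $B(\mathrm{Heq}(A,p_\infty),\Aut(A),*)$. That case is a real computation your outline omits. One way is to compare with $B(B(*,\Aut(A),\Aut(A)),\Aut(A),*)$ via the equivariant map $B(*,\Aut(A),\Aut(A))\to\mathrm{Heq}(A,p_\infty)$ over $B\Aut(A)$ coming from the action on $A$.

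\textbf{Where the difficulty actually lies.} The step you flag as the main obstacle needs no hypothesis on $A$. For a Hurewicz fibration $p$, the map $\mathrm{Heq}(A,p)\to X$ is itself a Hurewicz fibration. Indeed, lifts in the pullback of $\mathrm{Map}(A,Z)\to\mathrm{Map}(A,X)$ along constant maps remain homotopy equivalences into the fibres, because fibre transport is a homotopy equivalence. The CW hypotheses are needed elsewhere. The quasifibration arguments only produce fibrewise weak equivalences, starting with the fibres of $\Gamma q$, which are a priori only weakly equivalent to $A$. These must be upgraded to the homotopy equivalences that the statement requires and that Dold's theorem (implicit in your Step~2) needs.
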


Therefore, if our homogeneous functor has corresponding fibration $p:Z\to F_k(M)$ such that its fiber is homotopic to $A$, then it corresponds to a homotopy class of continuous map $F_k(M)\to B\Aut(A)$, and vice versa. This shows that our result Theorem~\ref{class} is a generalization of Weiss's result for any simplicial model category $\scr{M}$. 
\section{The ∞-Category Theoretic Analogue}\label{section6}

It is natural to describe the Manifold Calculus of Functors using the language of ∞-categories, which is developed by Arakawa in \cite{arakawa2026contextmanifoldcalculus}. Let $\scr{M}$ be an ∞-category that admits finite and tower limits. We have the following definition which is completely analogous with Definition~\ref{metadef}.
\begin{definition}
    Given a smooth manifold $M$, a functor $F:\scr{O}(M)^{op}\to \scr{M}$ is said to be good if the following two conditions are satisfied:
    \begin{itemize}
        \item If $U\subseteq V\in \scr{O}(M)$ is an isotopy equivalence, then $F(V)\to F(U)$ is an isomorphism.
        \item Given $U_i\subseteq U_{i+1}\in \scr{O}(M)$, the canonical map $F(\cup_{i}U_i)\to \mathrm{lim}_iF(U_i)$ is an isomorphism.
    \end{itemize}

   Given a good functor $F:\scr{O}(M)^{op}\to \scr{M}$, $F$ is a polynomial functor of degree $\leq k$ for some $k\in \mathbb{N}$,, if given any open set $U\in \scr{O}(M)$, for any collection of pairwise disjoint nonempty closed sets $A_0,...,A_k\subset U$, the canonical map
    \begin{align*}
        F(U)\to \mathrm{lim}_{\emptyset\neq S\subseteq \{0,...,k\}}F(U-\cup_{i\in S}A_i)
    \end{align*} is an isomorphism.
\end{definition} 
Let $\Fun_{\scr{I}\mathrm{so}}(\scr{O}(M)^{op}, \scr{M})$ be the full subcategory of $\Fun_{}(\scr{O}(M)^{op}, \scr{M})$ whose objects are the good functors, and let $\scr{P}\mathrm{oly}_k(\scr{O}(M)^{op},\scr{M})$ be the full subcategory of $\Fun_{\scr{I}\mathrm{so}}(\scr{O}(M)^{op}, \scr{M})$ whose objects are the polynomial functors of degree $\leq k$. Then, the following was shown in \cite{arakawa2026contextmanifoldcalculus}
\begin{theorem}
    \cite[Theorem 1.3 (op)]{arakawa2026contextmanifoldcalculus} The right Kan extension along the restriction $B_k(M)\hookrightarrow\scr{O}(M)$ determines a categorical equivalence of ∞-categories:
    \begin{align*}
        \Fun_{\scr{I}\mathrm{so}}(B_k(M)^{op}, \scr{M})\xrightarrow{\simeq}\scr{P}\mathrm{oly}_k(\scr{O}(M)^{op},\scr{M})
    \end{align*}
    In particular, this induces a right adjoint\begin{align*}
        \scr{T}_k:\Fun_{\scr{I}\mathrm{so}}(\scr{O}(M)^{op}, \scr{M})\to \scr{P}\mathrm{oly}_k(\scr{O}(M)^{op},\scr{M})
    \end{align*}
\end{theorem}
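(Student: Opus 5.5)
The plan is to prove the equivalence by exhibiting restriction along $\iota:B_k(M)\hookrightarrow\scr{O}(M)$ as its inverse. Since $B_k(M)$ is a full subcategory, the right Kan extension $\iota_*$ satisfies $\iota^*\iota_*\simeq\mathrm{id}$ on $\Fun(B_k(M)^{op},\scr{M})$, whenever the relevant limits exist. I will therefore check three things in turn: that $\iota_*$ lands in $\scr{P}\mathrm{oly}_k(\scr{O}(M)^{op},\scr{M})$ when applied to isotopy-invariant functors; that $\iota^*$ sends good functors to isotopy-invariant ones; and that the unit $F\to\iota_*\iota^*F$ is an isomorphism for every polynomial $F$ of degree $\leq k$. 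Given these, $\iota_*$ and $\iota^*$ are inverse equivalences between the two full subcategories. The functor $\scr{T}_k:=\iota_*\iota^*$ is then right adjoint to the inclusion of $\scr{P}\mathrm{oly}_k$, by the adjunction $\iota^*\dashv\iota_*$. This recovers the formula $\scr{T}_kF(U)=\lim_{V\in B_k(U)}F(V)$, the analogue of Definition~\ref{metadef}.

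\textbf{Polynomiality and goodness of $\iota_*G$.} For the polynomial condition, let $A_0,\dots,A_k\subset U$ be pairwise disjoint closed sets. Any $V\in B_k(U)$ has at most $k$ components, so after isotopy shrinking it misses some $A_i$. I would make this into a cofinality statement: the canonical functor from the union of the posets $B_k(U-\cup_{i\in S}A_i)$, glued along the cube indexed by $\emptyset\neq S\subseteq\{0,\dots,k\}$, to $B_k(U)$ is initial after inverting isotopy equivalences. This is where isotopy invariance of $G$ enters: it lets us replace $B_k(U)$ by its localization at isotopy equivalences, whose objects are essentially configurations of at most $k$ points. That localization is then covered by the open subcategories indexed by $S$. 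Commuting limits then gives $\iota_*G(U)\simeq\lim_S\iota_*G(U-\cup_{i\in S}A_i)$. The goodness conditions follow from the same model. Isotopy equivalences $U\subseteq U'$ induce equivalences of the localized indexing categories. For an increasing union $\bigcup_iU_i$, a finite configuration in the union lies in some $U_i$, so the localized category is a sequential colimit and the limit becomes a tower limit, which exists in $\scr{M}$ by assumption.

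\textbf{Unit is an isomorphism.} This is the step I expect to be the main obstacle. The cleanest route is induction on handle structure. First take $U$ the interior of a compact handlebody and induct on the number and index of handles. For $U$ a disjoint union of at most $k$ open balls, $U$ is initial in $B_k(U)$ and the claim is immediate. For the inductive step, choose $k+1$ disjoint closed slabs $A_i$ cutting the core of the top handle, in the manner of Weiss's proof. Each $U-\cup_{i\in S}A_i$ is isotopy equivalent to a handlebody with fewer or lower-index handles. The polynomial condition for $F$, together with the same condition for $\iota_*\iota^*F$ proved above, reduces the unit at $U$ to the unit at these smaller sets. A general $U$ is an increasing union of such handlebody interiors, so goodness of both sides and the tower-limit condition finish the argument. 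The delicate points are making the slab argument compatible with $\infty$-categorical limits rather than homotopy limits, and checking that ``fewer handles'' is a well-founded induction for noncompact $M$. If the handle induction becomes unwieldy, I would instead try to prove directly that $B_k(U)\to\scr{O}(U)$ is a Weiss $k$-cover in the sense of descent: show the localization of $B_k(U)$ at isotopy equivalences is equivalent to the exit-path category of the unordered configuration stratification, and argue that polynomial functors are exactly those satisfying descent for that cover.
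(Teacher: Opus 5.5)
The paper gives no proof of this statement (it is quoted from Arakawa), so your proposal has to stand on its own. Its architecture is sound: restriction as inverse, $\iota^*\iota_*\simeq\mathrm{id}$ by full faithfulness, polynomiality from a cover of the localized indexing category, and Weiss's handle induction plus exhaustion for the unit.

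\textbf{Existence of $\iota_*$ is never established.} In Section~\ref{section6}, $\scr{M}$ is only assumed to have finite and tower limits. But $\iota_*G(U)=\lim_{B_k(U)^{op}}G$ is a limit over an infinite poset. ``Whenever the relevant limits exist'' assumes exactly what makes the theorem nontrivial in this generality. Both your polynomiality step (``commuting limits'') and your goodness step use these limits before you know they exist.

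The repair is to run your handle induction \emph{first}, to prove existence:
\begin{itemize}
\item If $U$ is a disjoint union of at most $k$ open balls, the limit is evaluation at $U$. Note that $U$ is terminal in $B_k(U)$, i.e.\ initial in $B_k(U)^{op}$, not initial in $B_k(U)$.
\item If $U$ is a handlebody interior, take $A_0,\dots,A_k$ to be cocore slabs of a top-index handle, or $k+1$ components if only $0$-handles remain. Write $W$ for the isotopy equivalences and $A_S=\bigcup_{i\in S}A_i$. A gluing equivalence $B_k(U)[W^{-1}]\simeq\colim_{\emptyset\neq S}B_k(U-A_S)[W^{-1}]$ then exhibits the limit as a finite limit of limits over sets isotopy equivalent to smaller handlebodies.
\item For arbitrary $U$, the analogous statement for increasing unions reduces to tower limits.
\end{itemize}
Your worry about well-foundedness is settled by ordering lexicographically by $(n_m,\dots,n_0)$, where $n_q$ is the number of $q$-handles. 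Removing $|S|\geq 2$ cocores of a $q$-handle deletes that handle but creates $|S|-1$ handles of index $q-1$, so a plain handle count need not drop. Noncompactness enters only through the exhaustion.

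\textbf{The gluing lemma is asserted, not proved, and it carries the weight.} Given this lemma, the unit step is Weiss's argument exactly as you describe, so the unit step is not the main obstacle. Pigeonhole shrinking shows that every $V\in B_k(U)$ is isotopy equivalent to an object of $\bigcup_iB_k(U-A_i)$. That is only essential surjectivity after localization. It does not show that the glued $\infty$-category maps onto $B_k(U)[W^{-1}]$ by an equivalence, or even by a final functor. (Limits over $B_k(U)^{op}$ need finality into $B_k(U)$, not initiality.) This requires genuine input, for example:
\begin{itemize}
\item an identification of $B_k(U)[W^{-1}]$ with the exit-path $\infty$-category of configurations of at most $k$ points, together with van Kampen descent for exit paths along the open cover by the $\mathrm{Conf}_{\le k}(U-A_i)$; or
\item a direct Quillen Theorem~A contractibility argument.
\end{itemize}

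\textbf{Your adjoint is on the wrong side.} Since $\iota_*$ is fully faithful and right adjoint to $\iota^*$, for polynomial $P$ you get $\Map(\scr{T}_kF,P)\simeq\Map(\iota^*F,\iota^*P)\simeq\Map(F,P)$. So $\scr{T}_k=\iota_*\iota^*$ is \emph{left} adjoint to the inclusion $\scr{P}\mathrm{oly}_k\hookrightarrow\Fun_{\scr{I}\mathrm{so}}(\scr{O}(M)^{op},\scr{M})$, and its unit $F\to\scr{T}_kF$ is the universal map to a polynomial functor. The isomorphism $\Map(P,F)\simeq\Map(P,\scr{T}_kF)$ that a right adjoint would need fails in general. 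For instance, when $k=0$, $\scr{T}_0F$ is constant at $F(\emptyset)$, while maps from a constant functor into $F$ see $F(M)$. So the statement's ``right adjoint'' cannot mean a right adjoint to this inclusion.
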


The above results of $\scr{T}_k$ agree with our definition of the polynomial approximation functor $\scr{T}_k$ in Definition~\ref{metadef}

\begin{definition}
    Given a functor $F:\scr{O}(M)^{op}\to \scr{M}$, $F$ is homogeneous of degree $k$, if 
    \begin{itemize}
        \item $F$ is a polynomial functor of degree $\leq k$
        \item $\scr{T}_{k-1}F\to *$ is an isomorphism. Here, $*\in \scr{M}$ is the terminal object of $\scr{M}$.
    \end{itemize}
\end{definition}
Let $\scr{H}\mathrm{omog} _k(\scr{O}(M)^{op},\scr{M})$ denote the full subcategory of $\scr{P}\mathrm{oly}_k(\scr{O}(M)^{op},\scr{M})$ whose objects are the homogeneous functors of degree $k$. We have the following theorem:
\begin{theorem}
    For $k=1$ or $k>1$ and $\scr{M}$ is pointed, the restriction functor induces a categorical equivalence
    \begin{align*}
        \scr{H}\mathrm{omog} _k(\scr{O}(M)^{op},\scr{M})\to \Fun_{\scr{I}\mathrm{so}}(\scr{I}^{(k), op},\scr{M})
    \end{align*} Here $\scr{I}^{(k)}\subset B_k(M)$ is a subcategory of open sets with exactly $k$ connected components and only isotopy equivalence.
\end{theorem}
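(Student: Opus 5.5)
The plan is to compare both sides through the classification theorem for homogeneous functors, in the $\infty$-categorical form of Arakawa's result (\cite[Theorem 1.3 (op)]{arakawa2026contextmanifoldcalculus}, quoted above). The first step uses that theorem: right Kan extension along $B_k(M)\hookrightarrow\scr{O}(M)$ identifies $\scr{P}\mathrm{oly}_k(\scr{O}(M)^{op},\scr{M})$ with $\Fun_{\scr{I}\mathrm{so}}(B_k(M)^{op},\scr{M})$. Under this identification the restriction functor
\[
\scr{H}\mathrm{omog}_k(\scr{O}(M)^{op},\scr{M})\to \Fun_{\scr{I}\mathrm{so}}(\scr{I}^{(k),op},\scr{M})
\]
factors as the inclusion of homogeneous functors into $\Fun_{\scr{I}\mathrm{so}}(B_k(M)^{op},\scr{M})$, followed by restriction along $\scr{I}^{(k)}\subset B_k(M)$. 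So it suffices to show that a good functor $G$ on $B_k(M)^{op}$ has right Kan extension $F$ that is homogeneous precisely when $G$ lies in the essential image of a fully faithful functor from $\Fun_{\scr{I}\mathrm{so}}(\scr{I}^{(k),op},\scr{M})$.

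Next I identify the condition $\scr{T}_{k-1}F\simeq *$. By the same theorem applied to $k-1$, this condition is equivalent to $G|_{B_{k-1}(M)}\simeq *$. Write $j\colon B_{k-1}(M)\hookrightarrow B_k(M)$ and $i\colon\scr{I}^{(k)}\hookrightarrow B_k(M)$. I would construct the inverse functor as an extension: given $E$ on $\scr{I}^{(k),op}$, extend it by $*$ on objects with fewer than $k$ components. For $k=1$ this is immediate, since $B_0(M)=\{\emptyset\}$ and every morphism of $B_1(M)$ into a nonempty set lies in $\scr{I}^{(1)}$; the extension by $*$ at $\emptyset$ is then a right Kan extension, since a nonempty ball $U$ has no morphism $U\to\emptyset$ in $B_1(M)$. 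For $k>1$ the extension is not automatic. A morphism $V\subseteq U$ with $V$ having fewer components than $U$ must be sent to a map $*\to E(U)$ in the contravariant direction (we need $F(U)\to F(V)$, i.e.\ $E(U)\to *$, which is canonical). The problematic morphisms are the inclusions $V\subseteq U$ with $V\in\scr{I}^{(k)}$ and $U$ having fewer components; these do not exist, because $U\supseteq V$ with $V$ having $k$ components inside at most $k-1$ balls is allowed. Such inclusions require maps $F(U)=*\to F(V)=E(V)$, which need a basepoint, and this is where pointedness of $\scr{M}$ enters. I would define the extension concretely as the left Kan extension $i_!$ along $i$ in the pointed setting, so that it vanishes off $\scr{I}^{(k)}$. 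The fact to check is that $i_!E(U)\simeq *$ when $U$ has fewer than $k$ components, because no $k$-component set maps into $U$ in the relevant direction within the comma category. I would then check that $i_!E$ is good, sending isotopy equivalences to equivalences, using that $\scr{I}^{(k)}$ contains all isotopy equivalences between $k$-component sets.

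Full faithfulness then follows because $i^*i_!\simeq \mathrm{id}$ ($i$ is fully faithful), and because any good $G$ with $j^*G\simeq *$ satisfies $G\simeq i_!i^*G$. The counit check is objectwise: on $k$-component objects both sides agree, and on the others both are terminal. Composing with Arakawa's equivalence gives the theorem.

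I expect the main obstacle to be the second step, namely verifying that the extension by zero is well defined and lands in good functors. The key case is the morphisms of $B_k(M)$ that go from sets with fewer than $k$ components to sets with $k$ components. The comma-category computation there uses that in $B_k(M)$ an inclusion from a $k$-component set into a set with fewer components is never an isotopy equivalence. I would first try to handle it via the pointed zero object, reducing to the statement that the relevant comma categories are either empty or have an initial object.
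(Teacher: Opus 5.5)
For $k=1$ your argument is essentially the paper's. Degree-$1$ homogeneity means $G(\emptyset)\simeq *$, and $\emptyset$ is a cone point of $B_1(M)^{op}$ whose comma category is empty. Since every inclusion of one ball in another is an isotopy equivalence, $\scr{I}^{(1)}$ is the full subcategory of nonempty objects, so right Kan extension gives the equivalence.

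For $k>1$ the paper gives no argument of its own; it cites \cite[Proposition 3.2]{arakawa2026contextmanifoldcalculus}. Your self-contained extension-by-zero argument has a genuine gap at the step you flag, because two claims it rests on are false for $k\ge 2$.

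\textbf{First}, $i\colon\scr{I}^{(k)}\hookrightarrow B_k(M)$ is not fully faithful. Take $V=B\sqcup B'$ and let $U$ be two small disjoint balls inside $B$. Then $U\subseteq V$ is a morphism of $B_k(M)$ between $2$-component objects that is not an isotopy equivalence. So $i^*i_!\simeq\mathrm{id}$ is unavailable. Moreover, any $G$ vanishing on $B_{k-1}(M)$ must send this morphism to a null map, since it factors through $B$.

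\textbf{Second}, the comma categories defining either Kan extension along $i^{op}$ at an object $U$ with fewer than $k$ components are nonempty. As you observed yourself, inclusions go both ways between $k$-component sets and smaller ones. For $k=2$ and $U$ a ball, $\{V\in\scr{I}^{(2)}:V\subseteq U\}$ has classifying space $F_2(U)\simeq\mathbb{RP}^{n-1}$, and the $V\in\scr{I}^{(2)}$ containing $U$ also form a nonempty category. Taking $E$ constant at a nonzero $A$, $A$ is a retract of the (co)limit over any nonempty indexing category. Hence neither Kan extension vanishes off $\scr{I}^{(k)}$, and your objectwise counit check fails.

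\textbf{Third}, and more fundamentally, a correct extension by zero would still give only a section of restriction. The converse asks that every good $G$ with $G|_{B_{k-1}(M)}\simeq *$ be equivalent to the extension by zero of $G|_{\scr{I}^{(k)}}$. This is a coherence statement that no objectwise check can detect. In an $\infty$-category, a morphism that factors through zero objects in several ways carries several null-homotopies, and their discrepancies are part of the data of $G$. Already for the square poset $x<a_1,a_2<y$, functors vanishing at $a_1,a_2$ are triples $(G(x),G(y),G(x)\to\Omega G(y))$, not pairs.

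The missing input is, in effect, that collapsing $B_{k-1}(M)$ inside $B_k(M)[\mathrm{Iso}^{-1}]^{op}$ yields the freely pointed $\mathrm{Sing}\,F_k(M)$, i.e.\ that all these higher null-homotopies are coherently trivial. That geometric fact is exactly what Arakawa's Proposition 3.2 supplies. In Weiss's approach it corresponds to computing $\mathrm{hofib}(\scr{F}_p\to\scr{T}_{k-1}\scr{F}_p)$ on $k$ disjoint balls. Your proposal neither proves this nor reduces it to something checkable.
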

\begin{proof}
    The case for $k>1$ and $\scr{M}$ is pointed is proved in \cite[Proposition 3.2]{arakawa2026contextmanifoldcalculus}. We  then provide a simple proof for the case of $k=1$ here.
    Notice that a functor $F:\scr{O}(M)^{op}\to \scr{M}$ is homogeneous of degree $1$ if and only if it is polynomial of degree $\leq 1$ and $F(\emptyset)\simeq *$ (in other words, reduced), so there is
    \begin{align*}
            \scr{H}\mathrm{omog} _1(\scr{O}(M)^{op},\scr{M})\simeq \scr{P}\mathrm{oly}_1^\mathrm{red}(\scr{O}(M)^{op},\scr{M})
    \end{align*}Here the superscript $\mathrm{red}$ refers to the reduced functors.

    Since $\scr{P}\mathrm{oly}_1^\mathrm{red}(\scr{O}(M)^{op},\scr{M})\simeq\Fun^\mathrm{red}_{\scr{I}\mathrm{so}}(B_1(M)^{op},\scr{M}) $, and any inclusion of $U\subset V$ in $B_1(M)^{op}$ is an isotopy equivalence if $U\neq \emptyset$, and there is
    \begin{align*}
       \scr{P}\mathrm{oly}_1^\mathrm{red}(\scr{O}(M)^{op},\scr{M})\simeq \Fun_{\scr{I}\mathrm{so}}(\scr{I}^{(1), op},\scr{M})
    \end{align*}
\end{proof}

Here appears the benefit of working in ∞-categories, by definition that given any $F\in \Fun_{\scr{I}\mathrm{so}}(\scr{I}^{(k), op},\scr{M})$, for any morphism $\iota\in \scr{I}^{(k)}$, $F(\iota)$ is an isomorphism. Therefore,
\begin{align*}
    \Fun_{\scr{I}\mathrm{so}}(\scr{I}^{(k), op},\scr{M})^\simeq =\Fun_{}(\scr{I}^{(k), op},\scr{M}^\simeq )
\end{align*} Here, $\scr{M}^\simeq$ denotes the maximal Kan complex contained in $\scr{M}$.
\begin{proposition}
        For $k=1$ or $k>1$ and $\scr{M}$ is pointed, there is homotopy equivalence
          \begin{align*}
          \scr{H}\mathrm{omog} _k(\scr{O}(M)^{op},\scr{M})^\simeq \to \Fun_{\scr{I}\mathrm{so}}(\scr{I}^{(k), op},\scr{M}^\simeq )
    \end{align*}
\end{proposition}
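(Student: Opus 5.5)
The plan is to obtain the map by applying the core functor $(-)^\simeq$ to the categorical equivalence of the preceding theorem, and then to identify the target with the displayed equality $\Fun_{\scr{I}\mathrm{so}}(\scr{I}^{(k), op},\scr{M})^\simeq =\Fun(\scr{I}^{(k), op},\scr{M}^\simeq)$.

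\textbf{Step 1: pass to cores.} The previous theorem gives a categorical equivalence of $\infty$-categories
\[
\mathrm{res}:\scr{H}\mathrm{omog}_k(\scr{O}(M)^{op},\scr{M})\to \Fun_{\scr{I}\mathrm{so}}(\scr{I}^{(k), op},\scr{M}),
\]
valid exactly for $k=1$, or $k>1$ with $\scr{M}$ pointed. For an $\infty$-category $\scr{C}$, its core $\scr{C}^\simeq$ is the largest Kan complex it contains: keep all objects, and keep only those simplices whose edges are isomorphisms. Any functor sends isomorphisms to isomorphisms, so it restricts to a map of cores. Now choose a homotopy inverse $G$ of $\mathrm{res}$, with natural isomorphisms $G\circ\mathrm{res}\simeq id$ and $\mathrm{res}\circ G\simeq id$. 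These restrict to cores, and since a natural isomorphism is given by a map $\scr{C}\times J\to\scr{C}$ from the walking isomorphism, its restriction $\scr{C}^\simeq\times J\to\scr{C}^\simeq$ is a homotopy of Kan complexes. Hence $\mathrm{res}^\simeq$ is a homotopy equivalence of Kan complexes. Equivalently, one may use the general fact that $(-)^\simeq$, being right adjoint to the inclusion of Kan complexes into $\infty$-categories, preserves categorical equivalences between $\infty$-categories.

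\textbf{Step 2: identify the target core.} I will justify the equality $\Fun_{\scr{I}\mathrm{so}}(\scr{I}^{(k), op},\scr{M})^\simeq =\Fun(\scr{I}^{(k), op},\scr{M}^\simeq)$.

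\emph{Objects.} An object of the left side is a functor sending every morphism of $\scr{I}^{(k)}$ to an isomorphism. Such a functor sends every edge of $\scr{N}(\scr{I}^{(k),op})$ to an equivalence, so it factors through $\scr{M}^\simeq$. This factorisation uses that $\scr{M}^\simeq\subseteq\scr{M}$ is the subcomplex of simplices all of whose edges are equivalences.

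\emph{Simplices.} An $n$-simplex of the left side is a map $\scr{I}^{(k),op}\times\Delta[n]\to\scr{M}$ such that
\begin{itemize}
\item every edge in the $\scr{I}^{(k)}$-direction goes to an isomorphism, since each vertex lies in $\Fun_{\scr{I}\mathrm{so}}$; and
\item every edge in the $\Delta[n]$-direction is a natural isomorphism, hence pointwise an isomorphism.
\end{itemize}
Every edge of the product is the composite of an edge of the first kind and an edge of the second, so it is also sent to an isomorphism. The map therefore factors through $\scr{M}^\simeq$. Conversely, any map into $\scr{M}^\simeq$ has all components invertible. Because pointwise invertible natural transformations are invertible (Joyal), it gives a simplex of the core.

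\textbf{Step 3: compose.} Composing the homotopy equivalence of Step 1 with the identification of Step 2 gives the claimed map. Note that writing $\Fun_{\scr{I}\mathrm{so}}(\scr{I}^{(k), op},\scr{M}^\simeq)$ is harmless, since $\Fun_{\scr{I}\mathrm{so}}$ with target $\scr{M}^\simeq$ is all of $\Fun(-,\scr{M}^\simeq)$.

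\textbf{Main obstacle.} I expect the main point of care to be the pointwise-invertibility criterion for natural isomorphisms used in Step 2, which I will cite rather than reprove. The remaining steps are formal.
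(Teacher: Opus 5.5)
Your proposal is correct and follows the paper's route. The paper gives no separate proof: the proposition comes from applying the core functor to the categorical equivalence of the preceding theorem, together with the identification $\Fun_{\scr{I}\mathrm{so}}(\scr{I}^{(k), op},\scr{M})^\simeq=\Fun(\scr{I}^{(k), op},\scr{M}^\simeq)$ displayed just before it. Your Step 2 does more than the paper, which only checks the object-level condition; you also verify the higher simplices using the pointwise criterion for natural isomorphisms, the same criterion the paper cites later from Kerodon.
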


It is shown in \cite[Lemma 3.5]{Weiss_1999} that $|\scr{I}^{(k)}|\simeq F_k(M)$. Therefore, we have the following.
\begin{theorem}
    $ \scr{H}\mathrm{omog} _k(\scr{O}(M)^{op},\scr{M})^\simeq\simeq \Fun(\mathrm{sing}F_k(M),\scr{M}^\simeq) $
\end{theorem}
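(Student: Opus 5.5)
The plan is to reduce the statement to the preceding proposition. That proposition identifies $\scr{H}\mathrm{omog}_k(\scr{O}(M)^{op},\scr{M})^\simeq$ with $\Fun(\scr{I}^{(k),op},\scr{M}^\simeq)$. After that, two things remain. First, replace the category $\scr{I}^{(k),op}$ by the Kan complex $\mathrm{sing}F_k(M)$. Second, check that this replacement does not change the functor space into the Kan complex $\scr{M}^\simeq$.

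The key observation is that $\scr{M}^\simeq$ is a Kan complex. Hence, for any simplicial set $X$, the mapping space $\Fun(X,\scr{M}^\simeq)$ depends only on the weak homotopy type of $X$. More precisely, a weak homotopy equivalence $X\to Y$ induces a homotopy equivalence $\Fun(Y,\scr{M}^\simeq)\to \Fun(X,\scr{M}^\simeq)$. This holds because $\Fun(-,K)$ for a Kan complex $K$ sends weak equivalences to homotopy equivalences, by the Quillen model structure on $\sSet$ and its cartesian closedness.

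Steps in order:
\begin{enumerate}
\item Note that $\Fun(\scr{I}^{(k),op},\scr{M}^\simeq)$ means $\Fun(\scr{N}(\scr{I}^{(k)})^{op},\scr{M}^\simeq)$.
\item Use Proposition~\ref{esdopposite} to obtain a zigzag of weak equivalences $\scr{N}(\scr{I}^{(k)})^{op}\simeq \mathrm{esd}^*(\cdot)\simeq \scr{N}\scr{I}^{(k)}$, which removes the ``op''.
\item Use Weiss's result $|\scr{I}^{(k)}|\simeq F_k(M)$ \cite[Lemma 3.5]{Weiss_1999}. Combined with the unit weak equivalence $\scr{N}\scr{I}^{(k)}\to \mathrm{sing}|\scr{N}\scr{I}^{(k)}|$ and $\mathrm{sing}$ of the homotopy equivalence, this gives a zigzag of weak equivalences $\scr{N}\scr{I}^{(k)}\simeq \mathrm{sing}F_k(M)$.
\item Apply $\Fun(-,\scr{M}^\simeq)$ to the whole zigzag. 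Each map becomes a homotopy equivalence of Kan complexes, so composing with the previous proposition gives the claimed equivalence.
\end{enumerate}

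The main obstacle is step 3, because Weiss's lemma is stated only as a homotopy equivalence of spaces. One has to make sure it yields a genuine zigzag of simplicial weak equivalences. The route through $\mathrm{sing}|-|$ achieves this without needing a direct map from the category to the configuration space.

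A secondary subtlety is the passage from $\Fun_{\scr{I}\mathrm{so}}(\scr{I}^{(k),op},\scr{M})^\simeq$ to $\Fun(\scr{I}^{(k),op},\scr{M}^\simeq)$. The paper asserts this before the proposition. Its justification is that a natural transformation between functors is an equivalence exactly when it is objectwise an equivalence, so the core of the functor category is $\Fun(-,\scr{M}^\simeq)$ whenever all morphisms are sent to equivalences. I would cite this standard fact, e.g. from \cite{lurieHTT}, rather than reprove it.
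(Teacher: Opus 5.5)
Your proposal is correct and is essentially the paper's argument. The paper combines the preceding proposition with Weiss's result $|\scr{I}^{(k)}|\simeq F_k(M)$ \cite[Lemma 3.5]{Weiss_1999}, and your extra steps (removing the op, passing through $\mathrm{sing}|-|$, and using that $\Fun(-,\scr{M}^\simeq)$ turns weak equivalences into homotopy equivalences because $\scr{M}^\simeq$ is Kan) spell out what the paper leaves implicit, under the same standing hypothesis ($k=1$, or $k>1$ with $\scr{M}$ pointed) inherited from that proposition.
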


\begin{corollary}
    Given $A\in \scr{M}$, let $\scr{F}_{kA}(\scr{O}(M)^{op},\scr{M})$ denote the ∞-category of homogeneous functors of degree $k$, such that for object $U\in \scr{I}^{(k)}$, $F(U)\simeq A$, then for $k=1$ or $k>1$ and $\scr{M}$ is pointed
    \begin{align*}
        \scr{F}_{kA}(\scr{O}(M)^{op},\scr{M})^\simeq \simeq \Fun(\mathrm{sing}F_k(M),B\Aut(A))
    \end{align*} Here $\Aut(A)$ is the simplicial monoid of isomorphisms of $A$ in $\scr{M}$, obtained from $\mathfrak{C}\scr{M}^\simeq$ and is weakly equivalent to $\Map_{\scr{M^\simeq}}(A,A)$.
\end{corollary}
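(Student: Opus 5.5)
The plan is to deduce the corollary from the preceding theorem, $\scr{H}\mathrm{omog}_k(\scr{O}(M)^{op},\scr{M})^\simeq\simeq \Fun(\mathrm{sing}F_k(M),\scr{M}^\simeq)$, by restricting both sides to the components singled out by $A$. On the left, $\scr{F}_{kA}(\scr{O}(M)^{op},\scr{M})^\simeq$ is the union of those path components of $\scr{H}\mathrm{omog}_k(\scr{O}(M)^{op},\scr{M})^\simeq$ whose values on objects of $\scr{I}^{(k)}$ are isomorphic to $A$. Since $\scr{I}^{(k)}$ has only isotopy equivalences and $F$ inverts them, it suffices to test one object per component of $|\scr{I}^{(k)}|\simeq F_k(M)$. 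On the right, the corresponding functors $\mathrm{sing}F_k(M)\to\scr{M}^\simeq$ are exactly those that land in the full subcategory $\scr{M}^\simeq_A\subseteq \scr{M}^\simeq$ spanned by objects isomorphic to $A$. The equivalence of the theorem is compatible with evaluation at objects, because it is built from restriction and from $|\scr{I}^{(k)}|\simeq F_k(M)$. It therefore restricts to an equivalence
\[
\scr{F}_{kA}(\scr{O}(M)^{op},\scr{M})^\simeq\simeq \Fun(\mathrm{sing}F_k(M),\scr{M}^\simeq_A).
\]
This holds because a full sub-Kan complex spanned by a union of components is a union of path components of the mapping space.

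The second step is to identify $\scr{M}^\simeq_A$ with $B\Aut(A)$. First, $\scr{M}^\simeq_A$ is a connected Kan complex. Second, I would use the Quillen equivalence $\mathfrak{C}\dashv\scr{N}^{\mathrm{hc}}$ from the proof of Proposition~\ref{first} to replace $\scr{M}^\simeq_A$ by $\scr{N}^{\mathrm{hc}}$ of a fibrant simplicial category $\scr{D}$ weakly equivalent to $\mathfrak{C}\scr{M}^\simeq_A$; for instance, take a Dwyer--Kan--Bergner fibrant replacement of $\mathfrak{C}\scr{M}^\simeq_A$. Every morphism of $\scr{D}$ is a homotopy equivalence, so by Lemma~\ref{Kancomplex} its homotopy coherent nerve is a Kan complex. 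Third, let $\scr{C}_{\Aut(A)}\subseteq\scr{D}$ be the full one-object subcategory on $A$, where $\Aut(A)=\Map_{\scr{D}}(A,A)$ is a Kan simplicial monoid weakly equivalent to $\Map_{\scr{M}^\simeq}(A,A)$. Its inclusion into $\scr{D}$ is essentially surjective on homotopy categories and is the identity on mapping spaces. The argument of Proposition~\ref{first} together with Lemma~\ref{lemma42} then gives weak equivalences
\[
B\Aut(A)\to\scr{N}^{\mathrm{hc}}\scr{C}_{\Aut(A)}\to \scr{N}^{\mathrm{hc}}\scr{D}\simeq \scr{M}^\simeq_A.
\]

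Finally, $\Fun(\mathrm{sing}F_k(M),-)$ preserves weak equivalences between Kan complexes. So, after replacing $B\Aut(A)$ by a Kan fibrant model if necessary, we obtain $\Fun(\mathrm{sing}F_k(M),\scr{M}^\simeq_A)\simeq\Fun(\mathrm{sing}F_k(M),B\Aut(A))$. Composing with the first step proves the corollary.

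I expect the main obstacle to be the first step: checking that the equivalence of the preceding theorem really respects the condition ``$F(U)\simeq A$''. This requires tracking how the restriction to $\scr{I}^{(k)}$ and the identification $|\scr{I}^{(k)}|\simeq F_k(M)$ (a localization $\scr{N}\scr{I}^{(k)}\to\mathrm{sing}F_k(M)$ inverting all morphisms) act on objects. I would handle it by noting that this localization is essentially surjective on objects. Then a functor out of $\mathrm{sing}F_k(M)$ lands in $\scr{M}^\simeq_A$ if and only if its restriction to $\scr{N}\scr{I}^{(k)}$ does.
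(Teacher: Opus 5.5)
Your proposal is correct and follows essentially the same route as the paper: restrict the preceding theorem to the component $\scr{M}^\simeq_A$, rigidify via $\mathfrak{C}$, and compare with the one-object full subcategory on $A$ (Lemma~\ref{lemma42} plus the argument of Proposition~\ref{first}) to get $\scr{M}^\simeq_A\simeq B\Aut(A)$. Your Dwyer--Kan--Bergner fibrant replacement $\scr{D}$ is a genuine improvement, because the paper applies the locally Kan arguments and the underived unit $\scr{M}^\simeq_A\to\scr{N}^{\mathrm{hc}}\mathfrak{C}\scr{M}^\simeq_A$ directly to $\mathfrak{C}\scr{M}^\simeq_A$, whose mapping spaces need not be Kan; and since $B$ preserves weak equivalences of simplicial monoids, your $\Map_{\scr{D}}(A,A)$ gives the same $B\Aut(A)$, up to weak equivalence, as the paper's $\Map_{\mathfrak{C}\scr{M}^\simeq}(A,A)$.
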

\begin{proof}
    The Kan complex $  \scr{F}_{kA}(\scr{O}(M)^{op},\scr{M})^\simeq$ is equivalent to the subsimplicial set of $ \Fun(\mathrm{sing}F_k(M),\\\scr{M}^\simeq)$ that factors through $\scr{M}_A^\simeq \to \scr{M}^\simeq$. Here, $\scr{M}_A^\simeq$ denotes the connected component in $\scr{M}^\simeq$ of $A$.
    
     There are categorical equivalences of ∞-categories, $\scr{M}^\simeq \to \scr{N}^\mathrm{hc}\mathfrak{C}\scr{M}^{\simeq}$ and $\scr{M}^\simeq_A \to \scr{N}^\mathrm{hc}\mathfrak{C}\scr{M}^{\simeq}_A$. It is easy to see that $\scr{C}_{\Map_{\mathfrak{C}\scr{M}_A^\simeq }(A,A)}\to \mathfrak{C}\scr{M}_A^\simeq $ is a categorical equivalence of locally Kan categories. Therefore, we have $\scr{M}_A^\simeq \simeq B\Map_{\mathfrak{C}\scr{M}^\simeq }(A,A)$. Lastly, we have $\Map_{\mathfrak{C}\scr{M}^\simeq }(A,A)\simeq \Map_{\scr{M^\simeq}}(A,A)$.
\end{proof}
\subsection{Connection with Our Results in Simplicial Model Categories}

Recall that there is a natural way to interpret a simplicial model category $\scr{M}$ as an ∞-category, which takes the homotopy coherent nerve of its full subcategory whose objects are the fibrant-cofibrant object, $\scr{N}^{\mathrm{hc}}\scr{M}^{cf}$. The following is a trivial result.
\begin{lemma}
    For any category $K$, the function
    \begin{align*}
        \Hom(K,\scr{M})/\mathfrak{w}\xrightarrow{QR}\Hom(K, \scr{M}^{cf})/\frak{w}
    \end{align*} is bijective. Here $F\mathfrak{w}F'$ if they are connected by a zigzag of objectwise weak equivalence, and $QR$ denote the objectwise fibrant-cofibrant replacement of functors.
\end{lemma}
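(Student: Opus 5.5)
The plan is to build mutually inverse functions on equivalence classes, using fibrant and cofibrant replacement on one side and the inclusion $\scr{M}^{cf}\hookrightarrow\scr{M}$ on the other. Fix functorial replacements $Q$ (cofibrant, with natural trivial fibration $QX\stackrel{\sim}{\twoheadrightarrow}X$) and $R$ (fibrant, with natural trivial cofibration $X\stackrel{\sim}{\rightarrowtail}RX$). For a functor $F:K\to\scr{M}$ set $QR(F):=Q\circ R\circ F$. Since $R$ preserves cofibrancy of nothing in particular but $Q$ of a fibrant object is again fibrant (the map $QRX\to RX$ is a fibration and $RX\to *$ is a fibration), each $QRF(k)$ is fibrant and cofibrant, so $QRF$ lands in $\scr{M}^{cf}$.

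\emph{Well-definedness.} Because $Q$ and $R$ are functors that preserve weak equivalences (by 2-out-of-3 against the natural weak equivalences), an objectwise weak equivalence $F\to F'$ is sent to an objectwise weak equivalence $QRF\to QRF'$. Applying this to each leg of a zigzag shows that $F\mathfrak{w}F'$ implies $QRF\,\mathfrak{w}\,QRF'$. This zigzag lies in $\Hom(K,\scr{M}^{cf})$ because every intermediate functor is again of the form $QR(-)$.

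\emph{Surjectivity.} If $G:K\to\scr{M}^{cf}$, view $G$ as a functor into $\scr{M}$. The natural maps
\[QRG\stackrel{\sim}{\to}RG\stackrel{\sim}{\leftarrow}G\]
form a zigzag of objectwise weak equivalences. The middle term $RG$ is fibrant but possibly not cofibrant, so the zigzag does not a priori live in $\Hom(K,\scr{M}^{cf})$. This is the one subtle point, and it is also where the meaning of $\mathfrak{w}$ on the target must be pinned down. The fix is to use the longer zigzag
\[G\stackrel{\sim}{\leftarrow}QG\stackrel{\sim}{\to}QRG,\]
where $QG\to G$ is the natural map and $QG\to QRG$ is $Q$ applied to $G\to RG$. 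Since $G$ is objectwise fibrant, $QG$ is objectwise fibrant and cofibrant, so this zigzag lies entirely in $\scr{M}^{cf}$. Hence $[G]=[QRG]$ is in the image of the function.

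\emph{Injectivity.} Suppose $QRF\,\mathfrak{w}\,QRF'$ in $\Hom(K,\scr{M}^{cf})$. Such a zigzag is in particular a zigzag in $\Hom(K,\scr{M})$, and we also have
\[F\stackrel{\sim}{\to}RF\stackrel{\sim}{\leftarrow}QRF,\]
and the same for $F'$. Concatenating these gives $F\mathfrak{w}F'$ in $\Hom(K,\scr{M})$.

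So the argument is formal throughout. The only real obstacle is the surjectivity step, where one must ensure the connecting zigzag stays inside $\scr{M}^{cf}$, and the trick above of applying $Q$ to an objectwise fibrant functor resolves it.
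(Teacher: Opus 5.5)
Your argument is correct and complete. The paper gives no proof at all (it calls the lemma trivial), and your three checks are exactly the routine verification it leaves to the reader: $Q$ and $R$ preserve weak equivalences by 2-out-of-3, so zigzags go to zigzags; injectivity follows from $F\to RF\leftarrow QRF$; and surjectivity from a zigzag joining $G$ to $QRG$ inside $\scr{M}^{cf}$.

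One small correction. You take $X\to RX$ to be a trivial cofibration, so $RG$ is objectwise cofibrant whenever $G$ is. The short zigzag $G\to RG\leftarrow QRG$ therefore already lies in $\Hom(K,\scr{M}^{cf})$, and your detour through $QG$ is valid but unnecessary.
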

\begin{lemma}
    Given $F\in \Fun(\scr{O}(M)^{op}, \scr{M})$, $F$ is a polynomial functor of degree $\leq k$ if and only if $\scr{N}^{\mathrm{hc}}QRF$ is a polynomial functor of degree $\leq k$ in $\Fun(\scr{O}(M)^{op}, \scr{N}^{\mathrm{hc}}\scr{M}^{cf})$.
\end{lemma}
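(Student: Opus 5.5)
The plan is to compare the two sides through the general fact that the homotopy coherent nerve $\scr{N}^{\mathrm{hc}}\scr{M}^{cf}$ presents the $\infty$-category underlying $\scr{M}$. Under this presentation, weak equivalences in $\scr{M}$ correspond to equivalences, and homotopy limits in $\scr{M}$ correspond to $\infty$-categorical limits. First, since polynomiality in $\scr{M}$ is phrased through homotopy limits that are invariant under objectwise weak equivalence, replace $F$ by $QRF$. We may do this because $F\to RF\leftarrow QRF$ is a zigzag of objectwise weak equivalences, and the defining map $F(U)\to \mathrm{holim}_S F(U-\cup_{i\in S}A_i)$ is a weak equivalence for $F$ if and only if it is one for $QRF$. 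This reduces the claim to functors $G:=QRF$ landing in $\scr{M}^{cf}$. The goodness conditions transfer the same way: isotopy equivalences going to weak equivalences, and the $\mathrm{holim}$ condition for increasing unions.

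Second, compare the two notions of limit. For a small diagram $D:K\to\scr{M}^{cf}$, I will invoke the standard comparison, Lurie's theorem that $\scr{N}^{\mathrm{hc}}$ of the fibrant-cofibrant part of a simplicial model category has $\infty$-limits computed by homotopy limits (HTT Theorem 4.2.4.1). By that theorem, the image of $\mathrm{holim}_K D$ in $\scr{N}^{\mathrm{hc}}\scr{M}^{cf}$ is a limit of $\scr{N}^{\mathrm{hc}}D$. Here we take $\mathrm{holim}_K D$ computed in the injective model structure as $\lim$ of a fibrant replacement, followed by a cofibrant replacement to stay in $\scr{M}^{cf}$. Apply this with $K$ the punctured cube $\{\emptyset\neq S\subseteq\{0,\dots,k\}\}$, and with $K=\mathbb{N}^{op}$ for the tower condition. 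The canonical map $G(U)\to\mathrm{holim}_S G(U-\cup_{i\in S}A_i)$ then corresponds to the canonical map $\scr{N}^{\mathrm{hc}}G(U)\to\lim_S \scr{N}^{\mathrm{hc}}G(U-\cup_{i\in S}A_i)$. The remaining fact is that a morphism between fibrant-cofibrant objects is a weak equivalence if and only if it is an isomorphism (equivalence) in $\scr{N}^{\mathrm{hc}}\scr{M}^{cf}$. This follows from the Whitehead theorem already cited together with Lemma~\ref{lemma1}. Together these give the equivalence of the two polynomial conditions, and likewise of the goodness conditions.

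The main obstacle is naturality in the second step. We must check that the point-set comparison map into the homotopy limit agrees, up to the identification given by Lurie's theorem, with the cone map induced in the $\infty$-category. Otherwise we only know that the two targets are equivalent, not that the specific canonical maps correspond. I would handle this by choosing the injective fibrant replacement $G\to\hat G$ on the punctured cube extended to the full cube. The strict limit cone of $\hat G$ is then a homotopy-coherent limit cone by Lurie, and the canonical map $G(U)\to\lim\hat G$ is a strict map of cones, hence a map of coherent cones under $\scr{N}^{\mathrm{hc}}$. A map of cones into a limit cone is an equivalence exactly when its source is a limit cone. This reduces the question to the object-level criterion, which yields both directions of the \emph{if and only if}.
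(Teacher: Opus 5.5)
Your proposal is correct and follows the paper's proof. Both apply HTT Theorem 4.2.4.1 to the $(k+1)$-cube and tower diagrams of $QRF$ in $\scr{M}^{cf}$, then match weak equivalences with equivalences in $\scr{N}^{\mathrm{hc}}\scr{M}^{cf}$. You additionally make explicit the reduction from $F$ to $QRF$, the goodness conditions, and the Whitehead/Lemma~\ref{lemma1} step, which the paper compresses into ``the characterization of polynomial functors''. Your naturality argument is valid, but you can skip it by applying the theorem directly to the cone $K^{\triangleleft}\to\scr{M}^{cf}$ given by the full cube, as the paper does, since HTT's notion of homotopy limit diagram is already a condition on the canonical map out of the cone point.
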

\begin{proof}
    From \cite[Theorem 4.2.4.1]{lurieHTT}, if $G:K^\triangleleft\to \scr{M}^{cf}$ determines a homotopy limit diagram if and only if $\scr{N}^\mathrm{hc}G:\scr{N}K^{\triangleleft}\to \scr{N^\mathrm{hc}M}^{cf}$ determines a limit diagram. It then follows from the characterization of polynomial functors in both the simplicial model category and the ∞-category that the two definitions coincide.
\end{proof}
\begin{lemma}
    Given a locally Kan simplicial category $\scr{M}$, for any ordinary category $K$, there is bijection:
    \begin{align*}
       \scr{N^{\mathrm{hc,*}}}: \Fun(K, \scr{M})/\mathfrak{w}\to \pi_0\Fun(\scr{N}K, \scr{N^{\mathrm{hc}}}\scr{M} )^\simeq 
    \end{align*}Here $F\mathfrak{w}F'$ if there is a zigzag of objectwise homotopy equivalence.
\end{lemma}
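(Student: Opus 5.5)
The plan is to build the map on vertices and edges first, and then to deduce injectivity and surjectivity on $\pi_0$ from the standard description of the homotopy category of $\scr{N}^{\mathrm{hc}}$ of a locally Kan category.

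\textbf{The map.} A functor $F:K\to\scr{M}$ gives a simplicial map $\scr{N}K\to\scr{N}\scr{M}$. Composing with the natural map $\varphi_{\scr{M}}:\scr{N}\scr{M}\to\scr{N}^{\mathrm{hc}}\scr{M}$ of Section~\ref{section42} yields a vertex $\scr{N}^{\mathrm{hc},*}F$ of $\Fun(\scr{N}K,\scr{N}^{\mathrm{hc}}\scr{M})$.

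\textbf{Well-definedness.} Let $\alpha:F\Rightarrow F'$ be an objectwise homotopy equivalence. It is a functor $K\times[1]\to\scr{M}$, hence gives a map $\scr{N}K\times\Delta[1]\to\scr{N}^{\mathrm{hc}}\scr{M}$, i.e.\ an edge from $\scr{N}^{\mathrm{hc},*}F$ to $\scr{N}^{\mathrm{hc},*}F'$. Each component $\alpha_k$ is a homotopy equivalence, so by Lemma~\ref{lemma1} it is an isomorphism in $\scr{N}^{\mathrm{hc}}\scr{M}$. Since $\scr{N}^{\mathrm{hc}}\scr{M}$ is an $\infty$-category, a natural transformation that is objectwise invertible is invertible in $\Fun(\scr{N}K,\scr{N}^{\mathrm{hc}}\scr{M})$ (Joyal). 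The edge therefore lies in the core, and zigzags map into a single path component.

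\textbf{Surjectivity: rectification.} Write $\mathfrak{C}$ for the left adjoint of $\scr{N}^{\mathrm{hc}}$. A vertex $G:\scr{N}K\to\scr{N}^{\mathrm{hc}}\scr{M}$ is the same as a simplicial functor $\mathfrak{C}\scr{N}K\to\scr{M}$. The counit $\mathfrak{C}\scr{N}K\to K$ is a weak equivalence of simplicial categories, since its mapping spaces are nerves of posets with an initial object and hence contractible. I would use Lurie's straightening/rectification theorem for $\scr{M}$-valued diagrams, namely the comparison between $\Fun(\mathfrak{C}\scr{N}K,\scr{M})$ and $\Fun(K,\scr{M})$ up to weak equivalence (\cite[Prop.~4.2.4.4]{lurieHTT}). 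After first replacing $\scr{M}$ by a simplicial model category with $\scr{M}$ as fibrant-cofibrant objects, e.g.\ simplicial presheaves, this produces a strict $F$ with $\scr{N}^{\mathrm{hc},*}F\simeq G$. Concretely, one left Kan extends along $\mathfrak{C}\scr{N}K\to K$ and checks that the result stays objectwise equivalent to a diagram landing in the image of $\scr{M}$ under the Yoneda embedding.

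\textbf{Injectivity and the main obstacle.} An edge in the core between $\scr{N}^{\mathrm{hc},*}F$ and $\scr{N}^{\mathrm{hc},*}F'$ is only a homotopy coherent natural equivalence. I would apply the same rectification to $K\times[1]$ relative to its two ends, obtaining a zigzag of strict objectwise equivalences through a replacement of $F$ (a cofibrant replacement in the projective structure) and a fibrant replacement of $F'$.

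The main obstacle is the step, shared by surjectivity and injectivity, of carrying out rectification for a merely locally Kan $\scr{M}$, which has no model structure of its own. The replacement objects must be shown to be homotopy equivalent to objects of $\scr{M}$, so that the zigzag can be transported back into $\scr{M}$ objectwise. I would handle this using the essential image of the enriched Yoneda embedding and Whitehead's theorem for fibrant-cofibrant objects.
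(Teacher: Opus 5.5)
\textbf{The gap.} The step you flag as the main obstacle cannot be carried out, and no variant of it can, because for a bare locally Kan $\scr{M}$ the statement is false.

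Knowing that each value of a rectified diagram $F_1\colon K\to\sSet^{\scr{M}^{op}}$ is homotopy equivalent to a representable does not give a strict diagram of representables. Transporting the structure maps along chosen homotopy inverses only yields maps that compose up to homotopy. That is a homotopy coherent diagram in $\scr{M}$ again, which is where you started.

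\textbf{Counterexample to surjectivity.} Let $M$ be a simplicial abelian group with $M_0=0$ and $|M|\simeq S^1$ (e.g.\ the Dold--Kan image of $\mathbb{Z}$ concentrated in degree $1$). Let $\scr{M}$ have a single object with endomorphism monoid $M$, and let $K=B(\mathbb{Z}/2)$. The underlying category of $\scr{M}$ is trivial, so $\Fun(K,\scr{M})$ is a point. On the other hand, $\scr{N}^{\mathrm{hc}}\scr{M}$ is a Kan complex weakly equivalent to $BM\simeq\mathbb{CP}^\infty$ (Lemmas~\ref{Kancomplex} and~\ref{lemma42}), so
\[
\pi_0\Fun(\scr{N}K,\scr{N}^{\mathrm{hc}}\scr{M})^\simeq\cong[\mathbb{RP}^\infty,\mathbb{CP}^\infty]\cong H^2(\mathbb{RP}^\infty;\mathbb{Z})\cong\mathbb{Z}/2 .
\]
In terms of your construction, the rectified $F_1$ is a simplicial right $M$-set equivalent to $M$ with a strict equivariant involution. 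A strict equivariant involution of the representable $M$ itself is left multiplication by some $g\in M_0$ with $g^2=1$, which is trivial here.

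\textbf{Counterexample to injectivity.} Injectivity fails as well, because your zigzag runs through projectively cofibrant and fibrant replacements that are not objects of $\Fun(K,\scr{M})$. Take $M=\mathbb{Z}[\Delta[1]]$ and $K=B\mathbb{Z}$. Two strict functors are related by a strict natural transformation only if they are equal, so the source is $M_0=\mathbb{Z}^2$. The target is $[S^1,B\mathbb{Z}]\cong\mathbb{Z}$.

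\textbf{Comparison with the paper and the repair.} The paper's proof has the same outline as yours: the pointwise criterion for natural isomorphisms gives well-definedness, and rectification along $\epsilon\colon\mathfrak{C}\scr{N}K\to K$ gives surjectivity, with Cordier--Porter in place of \cite[Prop.~4.2.4.4]{lurieHTT}. It shares the defect. Its injectivity is deduced from Lemma~\ref{lemma1} alone, which says nothing about non-strict natural equivalences. The rectification it cites cannot hold for every locally Kan $\scr{M}$, by the example above.

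The missing ingredient is extra structure on $\scr{M}$. Suppose $\scr{M}=\scr{M}_0^{cf}$ for a simplicial model category $\scr{M}_0$, which is the only case used in Section~\ref{section6}. Then you can rectify directly in $\scr{M}_0^K$ (via \cite[Prop.~4.2.4.4]{lurieHTT} when $\scr{M}_0$ is combinatorial). The functorial replacement $QR$ then carries the rectified diagram, and every intermediate term of the zigzags, back into $\Fun(K,\scr{M}_0^{cf})$. It is this replacement functor, not Yoneda plus Whitehead, that closes the gap. It is available only because $\scr{M}_0^{cf}$ sits inside a model category, not merely because it is locally Kan.
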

\begin{proof}
    We first show that the above determines a function. 
    
    A natural transformation $\eta:F\to F'$ is a objectwise homotopy equivalence when the vertical morphisms of the corresponding functor $\eta':K\times I\to \scr{M}$ factors through the connected components of the mapping space of $\scr{M}$ of homotopy equivalence. This induces $\scr{N}^\mathrm{hc}\eta':\scr{N}K\times \scr{N}I\cong\scr{N}K\times \Delta[1]\to \scr{N^{\mathrm{hc}}}\scr{M} $. Then by \cite[Theorem 4.4.4.4]{kerodon}, $\scr{N}^\mathrm{hc}\eta'$ is an isomorphism of functors if and only if for any $k\in \scr{N}K$, $\scr{N}^\mathrm{hc}\eta'(k)$ is an isomorphism. Then by Lemma~\ref{lemma1}, this is true for $\scr{N}^\mathrm{hc}\eta'$, so $\scr{N}^\mathrm{hc}F$ and $\scr{N}^\mathrm{hc}F'$ lives in the same connected component of $\Fun(\scr{N}K, \scr{N^{\mathrm{hc}}}\scr{M} )^\simeq $.

    Since a morphism in $\scr{M}$ is a homotopy equivalence if and only if it is an isomorphism in $\scr{N}^{\mathrm{hc}}\scr{M}$, $\scr{N^{\mathrm{hc,*}}}$ is injective. 
    
    There is $\Hom(\scr{N}K, \scr{N^{\mathrm{hc}}}\scr{M})\cong \Hom(\mathfrak{C}\scr{N}K,\scr{M})$. To show surjectivity, it suffices to show that for any $F:\mathfrak{C}\scr{N}K\to \scr{M}$, there is a objectwise weak equivalence $\eta:F\to F'$ such that $F':\mathfrak{C}\scr{N}K\to \scr{M}$ factors through $\epsilon:\mathfrak{C}\scr{N}K\to K$. It was shown in \cite[Page 66]{Cordier_Porter_1986}\begin{align*}
\epsilon^*: \mathrm{Ho}\Fun(K, \scr{M})\to      \mathrm{Ho}\Fun(\mathfrak{C}\scr{N}K , \scr{M}) 
    \end{align*}is a categorical equivalence for any locally Kan category $\scr{M}$, which implies what we want to prove.
\end{proof}
Therefore, we can safely translate the work of Manifold Calculus of Functors in ∞-category to Manifold Calculus in simplicial model category. Especially, there is the following.

\begin{proposition}(Theorem~\ref{class})
    For $k=1$ or $k>1$ and $\scr{M}$ is pointed
    \begin{align*}
        \scr{F}_{kA}(\scr{O}(M)^{op},\scr{M})/\mathfrak{w}\cong[F_k(M),B\Aut(A)]
    \end{align*}
\end{proposition}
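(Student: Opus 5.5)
The plan is to reduce the simplicial-model-category classification to the $\infty$-categorical one through the three lemmas just proved, and then to read off the answer from the corollary identifying $\scr{F}_{kA}(\scr{O}(M)^{op},\scr{M})^\simeq$ with $\Fun(\mathrm{sing}F_k(M),B\Aut(A))$, applied to the $\infty$-category $\scr{N}^{\mathrm{hc}}\scr{M}^{cf}$.

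\textbf{Step 1: replace by fibrant-cofibrant diagrams.} Using the first lemma, replace $\scr{F}_{kA}(\scr{O}(M)^{op},\scr{M})/\mathfrak{w}$ by the corresponding set of $\mathfrak{w}$-classes of functors into $\scr{M}^{cf}$. Objectwise $QR$ preserves weak equivalences, so it preserves goodness, polynomiality, homogeneity and the condition $F(U)\simeq A$. Hence the bijection of that lemma restricts to the subsets of homogeneous functors with fibre type $A$.

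\textbf{Step 2: pass to the homotopy coherent nerve.} Weak equivalences between fibrant-cofibrant objects are homotopy equivalences. The third lemma, applied with $K=\scr{O}(M)^{op}$ and the locally Kan category $\scr{M}^{cf}$, therefore gives a bijection onto
\[
\pi_0\Fun(\scr{N}\scr{O}(M)^{op},\scr{N}^{\mathrm{hc}}\scr{M}^{cf})^\simeq .
\]
By the second lemma, and by the analogous statement for goodness (via \cite[Theorem 4.2.4.1]{lurieHTT} for the sequential holim), this bijection matches homogeneous functors of degree $k$ on both sides.

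\textbf{Step 3: check that homogeneity matches.} For the condition $\scr{T}_{k-1}F\simeq *$, I would note that $\scr{T}_{k-1}$ in Definition~\ref{metadef} is a homotopy right Kan extension from $B_{k-1}(M)$. By the same HTT comparison it agrees with the $\infty$-categorical right Kan extension. The terminal object of $\scr{M}$ is fibrant, so its image is terminal in $\scr{N}^{\mathrm{hc}}\scr{M}^{cf}$.

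\textbf{Step 4: conclude.} Under these identifications, $\scr{F}_{kA}(\scr{O}(M)^{op},\scr{M})/\mathfrak{w}$ becomes $\pi_0$ of the core of the $\infty$-category $\scr{F}_{kA}(\scr{O}(M)^{op},\scr{N}^{\mathrm{hc}}\scr{M}^{cf})$. By the corollary this is
\[
\pi_0\Fun(\mathrm{sing}F_k(M),B\Aut(A)) .
\]
Since $B\Aut(A)$ may be taken to be a Kan complex, this equals $[F_k(M),B\Aut(A)]$.

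It remains to check that the $\Aut(A)$ appearing in the corollary agrees with the simplicial monoid of self weak equivalences of $A$ in $\scr{M}$. The corollary's $\Aut(A)$ is computed from $\mathfrak{C}(\scr{N}^{\mathrm{hc}}\scr{M}^{cf})^\simeq$. By the counit equivalence $\mathfrak{C}\scr{N}^{\mathrm{hc}}\to\mathrm{id}$ for locally Kan categories, its mapping space at $A$ is weakly equivalent to the union of weak-equivalence components of $\Map_\scr{M}(A,A)$. Hence the classifying spaces agree up to weak equivalence, for instance using Proposition~\ref{first}.

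The main obstacle I expect is Step 3. Making sure that the homotopy-limit conditions (tower holims, cubical holims, and $\scr{T}_{k-1}$ as a homotopy right Kan extension) correspond exactly to the $\infty$-categorical limits is where care is needed, particularly for the Kan extension over $B_{k-1}(U)$. I would first try to deduce it from \cite[Theorem 4.2.4.1]{lurieHTT} applied pointwise, since right Kan extensions are computed by pointwise limits over the comma categories $B_{k-1}(U)$.
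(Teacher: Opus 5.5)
Your proposal is correct and follows essentially the paper's route. You replace diagrams by fibrant-cofibrant ones, pass to $\scr{N}^{\mathrm{hc}}\scr{M}^{cf}$ via the three comparison lemmas, and then read off $\pi_0$ of the core from the $\infty$-categorical corollary $\scr{F}_{kA}(\scr{O}(M)^{op},\scr{M})^\simeq\simeq\Fun(\mathrm{sing}F_k(M),B\Aut(A))$; your explicit checks that goodness and the condition $\scr{T}_{k-1}F\simeq *$ transfer (pointwise right Kan extensions plus \cite[Theorem 4.2.4.1]{lurieHTT}), and that the two meanings of $\Aut(A)$ agree, fill in steps the paper leaves implicit.
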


\bibliographystyle{plainnat} 
\bibliography{bibliography} 

@article{Rourkesemisimplicialset,
	author = {C. Rourke and B. Sanderson},
	journal = {The Quarterly Journal of Mathematics},
	title = {{$\Delta$}-{Set I}: Homotopy Theory},
	year = {1971}}

@book{Hatcher:478079,
	address = {Cambridge},
	author = {Hatcher, A.},
	publisher = {Cambridge Univ. Press},
	title = {{Algebraic Topology}},
	year = {2000}}

@article{WhiteheadI,
	author = {Whitehead, J. H. C.},
	journal = {Bull. Am. Math. Soc.},
	title = {Combinatorial Homotopy, I},
	year = {1949}}

@book{goerss-jardine,
	author = {Goerss, P. G. and Jardine, J. F.},
	publisher = {Birkh{\"a}user Basel},
	title = {Simplicial homotopy theory},
	year = 2009}

@article{tsopmene2024classificationhomogeneousfunctorsmanifold,
	author = {Songhafouo Tsopm{\'e}n{\'e}, P. A. and Stanley, D.},
	journal = {Journal of Homotopy and Related Structures},
	number = {1},
	pages = {63--103},
	title = {Classification of homogeneous functors in manifold calculus},
	volume = {20},
	year = {2025}}

@article{JOYAL2002207,
	author = {Joyal, A.},
	doi = {https://doi.org/10.1016/S0022-4049(02)00135-4},
	journal = {Journal of Pure and Applied Algebra},
	number = {1},
	pages = {207-222},
	title = {Quasi-categories and Kan complexes},
	volume = {175},
	year = {2002}}

@inbook{QuillenGroupComp,
	author = {Quillen, D.},
	chapter = {Appendix Q},
	publisher = {Memoir of the A.M.S.},
	title = {On the group completion of a simplicial monoid},
	year = {1994}}

@book{lurieHTT,
	address = {Princeton, NJ},
	author = {Lurie, J.},
	isbn = {978-0-691-14048-3},
	publisher = {Princeton University Press},
	series = {Annals of Mathematics Studies},
	title = {Higher Topos Theory},
	volume = {170},
	year = {2009}}

@misc{arakawa2025classificationdiagramssimplicialcategories,
	author = {Arakawa, K.},
	howpublished = {arXiv:2401.16855},
	title = {Classification diagrams of simplicial categories},
	year = {2025}}

@article{Cordier1982DiagrammeHomotopique,
	author = {Cordier, J.},
	journal = {Cahiers de Topologie et G\'eom\'etrie Diff\'erentielle Cat\'egoriques},
	language = {French},
	number = {1},
	pages = {93--112},
	title = {Sur la notion de diagramme homotopiquement coh\'erent},
	volume = {23},
	year = {1982}}

@misc{kerodon,
	author = {Lurie, J.},
	howpublished = {\url{https://kerodon.net}},
	title = {Kerodon},
	year = {2026}}

@unpublished{lurieHA,
	author = {Lurie, J.},
	howpublished = {Preprint},
	title = {Higher Algebra},
	year = {2017}}

@mastersthesis{KBergEsd,
	author = {Berg, K.},
	school = {University of Oslo},
	title = {Edgewise Subdivision and Simple Maps},
	year = {2009}}

@article{Weiss_1999,
	author = {Weiss, M.},
	journal = {Geometry \& Topology},
	month = may,
	number = {1},
	pages = {67--101},
	publisher = {Mathematical Sciences Publishers},
	title = {Embeddings from the point of view of immersion theory: Part I},
	volume = {3},
	year = {1999}}

@book{MayFib,
	author = {May, J. P.},
	fseries = {Memoirs of the American Mathematical Society},
	publisher = {Providence, RI: American Mathematical Society (AMS)},
	series = {Mem. Am. Math. Soc.},
	title = {Classifying spaces and fibrations},
	volume = {155},
	year = {1975}}

@article{arakawa2026contextmanifoldcalculus,
	author = {Arakawa, K.},
	journal = {Journal of Homotopy and Related Structures},
	month = Mar,
	publisher = {Springer Science and Business Media LLC},
	title = {A context for manifold calculus},
	year = {2026}}

@article{Cordier_Porter_1986,
	author = {Cordier, J. and Porter, T.},
	journal = {Mathematical Proceedings of the Cambridge Philosophical Society},
	number = {1},
	pages = {65--90},
	title = {Vogt's theorem on categories of homotopy coherent diagrams},
	volume = {100},
	year = {1986}}

\end{document}